\newtheorem{Theorem}{Theorem}[section]
\newtheorem{Lemma}[Theorem]{Lemma}
\newtheorem{Corollary}[Theorem]{Corollary}
\newtheorem{Proposition}[Theorem]{Proposition}
\newtheorem{Remark}[Theorem]{Remark}
\newtheorem{Example}[Theorem]{Example}
\newtheorem{Definition}[Theorem]{Definition}
\newtheorem{Conjecture}[Theorem]{Conjecture}
\numberwithin{equation}{section}
\def\QQ{{\mathbb Q}} \def\NN{{\mathbb N}} \def\ZZ{{\mathbb Z}}
\def\Hc{{\mathcal H}} \def\Cc{{\mathcal C}}
\def\opn#1#2{\def#1{\operatorname{#2}}} 
\opn\chara{char} \opn\length{\ell} \opn\pd{pd} \opn\rk{rk}
\opn\pdim{pdim} \opn\injdim{inj\,dim}
\opn\rank{rank} \opn\depth{depth} \opn\grade{grade} 
\opn\hei{ht} \opn\embdim{emb\,dim}\opn\codim{codim}
\opn\Tr{Tr} \opn\bigrank{big\,rank}
\opn\superheight{superheight} \opn\lcm{lcm}
\opn\rdim{rdim} \opn\trdeg{tr\,deg} \opn\reg{reg}  \opn\lreg{lreg} 
\opn\ini{in} \opn\lpd{lpd} \opn\size{size} \opn{\mult}{mult}
\opn\div{div} \opn\Div{Div} \opn\cl{cl} \opn\Cl{Cl}
\opn\Spec{Spec} \opn\Supp{Supp} \opn\supp{supp} 
\opn\Sing{Sing} \opn\Ass{Ass} \opn\Min{Min}
\opn\Proj{Proj} \opn{\Max}{Max} \opn{\Assh}{Assh}
\opn\Ann{Ann} \opn\Rad{Rad} \opn\Soc{Soc}
\opn\Syz{Syz} \opn\Im{Im} \opn\Ker{Ker} \opn\Coker{Coker}
\opn\Am{Am} \opn\Hom{Hom} \opn\Tor{Tor} \opn\Ext{Ext}
\opn{\cone}{cone}
\opn{\PF}{PF}
\opn{\F}{F}
\opn{\QF}{QF}
\opn{\RF}{RF}
\opn{\D}{D}
\opn{\H}{H}
\opn{\t}{t}
\opn{\Ap}{Ap}
\opn{\UF}{UF}
\opn{\MPD}{MPD}
\opn{\Maximals}{Maximals}
\begin{document}

\title{Affine semigroups of maximal projective dimension}

\address{IIT Gandhinagar, Palaj, Gandhinagar, Gujarat-382355 India}

\author{Om Prakash Bhardwaj}
\email{om.prakash@iitgn.ac.in}

\author{Kriti Goel}
\email{kritigoel.maths@gmail.com}

\author{Indranath Sengupta}
\email{indranathsg@iitgn.ac.in} 
\thanks{2010 Mathematics Subject Classification: 20M14, 20M25, 13A02, 13D02.}
\thanks{Keywords: Maximal projective dimension semigroups, pseudo-Frobenius elements, Frobenius elements, $\prec$-symmetric semigroups, row-factorization matrices, generic toric ideals.}
\thanks{The second author is supported by the Early Career Fellowship at IIT Gandhinagar.}
\thanks{The third author is the corresponding author; supported by the MATRICS research grant MTR/2018/000420, sponsored by the SERB, Government of India.}
\thanks{An extended abstract of the paper is published in the proceedings of The 34th International Conference on Formal Power Series and Algebraic Combinatorics held at Indian Institute of Science, Bangalore (India) during July 18-22, 2022.}

\maketitle

\begin{abstract}
	A submonoid of $\mathbb{N}^d$ is of maximal projective dimension ($\MPD$) if the associated affine semigroup ring has the maximum possible projective dimension. Such submonoids have a nontrivial set of pseudo-Frobenius elements. We generalize the notion of symmetric semigroups, pseudo-symmetric semigroups, and row-factorization matrices for pseudo-Frobenius elements of numerical semigroups to the case of $\MPD$-semigroups in $\mathbb{N}^d$.  
	Under suitable conditions, we prove that these semigroups satisfy the generalized Wilf’s conjecture. We prove that the generic nature of the defining ideal of the associated semigroup ring of an $\MPD$-semigroup implies uniqueness of row-factorization matrix for each pseudo-Frobenius element. Further, we give a description of pseudo-Frobenius elements and row-factorization matrices of gluing of $\MPD$-semigroups. We prove that the defining ideal of gluing of $\MPD$-semigroups is never generic.
\end{abstract}

\section{Introduction}

Let $\ZZ$ and $\NN$ denote the sets of integers and non-negative integers respectively. An affine semigroup $S$ is a finitely generated submonoid of $\NN^d$ for some positive integer $d.$ When $d=1$, affine semigroups correspond to numerical semigroups. Equivalently, a submonoid $S$ of $\NN$ is called a numerical semigroup if it has a finite complement in $\NN.$ If $S \neq \NN,$ then the largest integer not belonging to $S$ is known as the Frobenius number of $S,$ denoted by $\F(S).$ Also, the finiteness of $\NN \setminus S$ implies that there exists at least one element $f \in \NN \setminus S$ such that $f + (S \setminus \{0\}) \subset S.$ These elements are called pseudo-Frobenius elements of the numerical semigroup $S$. The set of pseudo-Frobenius elements is denoted by $\PF(S).$ In particular, $\F(S)$ is a pseudo-Frobenius number. But for affine semigroups in $\NN^d,$ the existence of such elements is not always guaranteed. The study of pseudo-Frobenius elements in affine semigroups over $\NN^d$ is done in \cite{pfelements}, where the authors consider the complement of the affine semigroup in its rational polyhedral cone. They give a necessary and a sufficient condition for the existence of pseudo-Frobenius elements using properties of the associated semigroup ring. Let $S$ be an affine semigroup and let $k$ be a field. The semigroup ring $k[S] = \bigoplus_{s \in S} k \ {\bf t}^s$ of $S$ is a $k$-subalgebra of the polynomial ring $k[t_1,\ldots,t_d],$ where $t_1,\ldots,t_d$ are indeterminates and ${\bf t}^s = \prod_{i=1}^{d} t_i^{s_i}$ for all $s = (s_1,\ldots,s_d) \in S.$ In \cite{pfelements}, the authors prove that an affine semigroup $S$ has pseudo-Frobenius elements if and only if the length of the graded minimal free resolution of the corresponding semigroup ring is maximal. Affine semigroups having pseudo-Frobenius elements are called maximal projective dimension ($\MPD$) semigroups. We call the cardinality of the set of pseudo-Frobenius elements the Betti-type of $S,$ and it is denoted by $\beta$-$\t(S).$ Note that this is not the Cohen-Macaulay type of $S$ since $\MPD$-semigroups, when $d \geq 2,$ are not Cohen-Macaulay.

One of the widely studied class of numerical semigroups is symmetric semigroups - numerical semigroups with a unique pseudo-Frobenius element.
The motivation to study these semigroups comes from the work E. Kunz, who proved that a one-dimensional analytically irreducible Noetherian local ring is Gorenstein if and only if its value semigroup is symmetric. In other words, a numerical semigroup is symmetric if and only if the associated semigroup ring is Gorenstein. Symmetric numerical semigroups have odd Frobenius number. A numerical semigroup $S$ is called pseudo-symmetric if $\F(S)$ is even and $\PF(S) = \{ \F(S)/2, \F(S) \}.$ We study a generalization of these notions to the case of $\MPD$-semigroups. Let $S$ be an $\MPD$-semigroup and let $\cone(S)$ denote the rational polyhedral cone of $S.$ Set $\Hc(S) = (\cone(S) \setminus S) \cap \NN^d.$ For a fixed term order $\prec$ on $\NN^d$, we define the Frobenius element as $\F(S)_{\substack{\\\prec}} = \max_{\substack{\\\prec}} \Hc(S).$ Note that in the case of $\MPD$-semigroups, Frobenius elements may not always exist, with respect to any term order. But if there is a term order $\prec$ such that $\F(S)_{\substack{\\\prec}}$ exists and $|\PF(S)|=1,$ then we say that $S$ is a $\prec$-symmetric semigroup. Further, if $\PF(S) = \{ \F(S)_{\substack{\\\prec}}/2, \F(S)_{\substack{\\\prec}} \},$ then we say that $S$ is a $\prec$-pseudo-symmetric semigroup. 

Any affine semigroup $S$ has a unique minimal generating set whose cardinality is known as the embedding dimension of $S$, and it is denoted by $e(S).$ In 1978, Wilf proposed a conjecture related to the Diophantine Frobenius Problem that claims that the inequality 
\[ \F(S)+1 \leq e(S) \cdot |\{ s \in S \mid s < \F(S) \}| \] 
is true for every numerical semigroup. While this conjecture still remains open, a potential extension of Wilf's conjecture to affine semigroups is studied in \cite{Wilfconjecture}. We prove that $\prec$-symmetric and $\prec$-pseudo-symmetric semigroups satisfy the generalized Wilf's conjecture under suitable assumptions.

In \cite{moscariello}, A. Moscariello introduced the notion of row-factorization ($\RF$) matrices associated to the pseudo-Frobenius elements of a numerical semigroup. He used this object to investigate the type of almost symmetric numerical semigroups of embedding dimension four and prove a conjecture given by T. Numata in \cite{numata}, which states that the type of an almost symmetric semigroup of embedding dimension four is at most three. In recent years, $\RF$-matrices have been studied by K. Eto, J. Herzog, and K.-i. Watanabe in (\cite{etoGeneric}, \cite{etoRowFactor}, \cite{eto2017}, \cite{herzogWatanabe}). We extend the definition of $\RF$-matrices of the pseudo-Frobenius elements to the setting of $\MPD$-semigroups. We also give a description of pseudo-Frobenius elements and their $\RF$-matrices in a glued $\MPD$-semigroup. For an affine semigroup $S,$ let $G(S)$ denote the group generated by $S$ in $\ZZ^d.$ Recall that an affine semigroup $S$ is said to be a gluing if there exists a non-trivial partition of its minimal generating set, $A_1 \amalg A_2,$ and $d \in \langle A_1 \rangle \cap \langle A_2 \rangle$ such that $G(\langle A_1 \rangle) \cap G(\langle A_2 \rangle) = d \ZZ.$  

Let $k$ be a field and $S = \langle a_1, \ldots, a_n \rangle$ be a finitely generated submonoid of $\mathbb{N}^d.$ Then the semigroup ring $k[S] = k[{\bf t}^{a_1}, \ldots, {\bf t}^{a_n}]$ of $S$ can be represented as a quotient
of a polynomial ring using a canonical surjection $\pi : k[x_1,\ldots,x_n] \rightarrow k[S]$ given by $\pi(x_i) = {\bf t}^{a_i}$ for all $i=1,\ldots,n.$ The kernel of this $k$-algebra homomorphism $\pi$, denoted by $I_S$, is a toric ideal, called defining ideal of $k[S],$ and the ring $k[S]$ is called a toric ring. A toric ideal is called generic if it has a minimal generating set consisting of binomials of full support. The notion of genericity of lattice ideal is introduced by I. Peeva and B. Sturmfels in \cite{peeva-sturmfels}. The authors give a minimal free resolution, namely the scarf complex, for the toric ring when the defining toric ideal is generic. The generic toric ideals are further studied in \cite{peeva-gasharov}, where the authors prove that  if $I_S$ is generic toric ideal, then it has a unique minimal system of generators of indispensable binomials up to the sign of binomials. In \cite{etoGeneric}, K. Eto gave a necessary and sufficient condition for the defining ideal $I_S$ of a numerical semigroup ring $k[S]$ to be generic. For $\MPD$-semigroups, we give a necessary condition for the generic nature of the defining ideal using $\RF$-matrices.

Now we summarize the contents of the paper. Unless and otherwise stated, $S$ denotes an $\MPD$-semigroup in $\NN^d,$ minimally generated by elements $a_1,\ldots,a_n \in \NN^d.$ Section 2 begins with recollection of some basic definitions, including the definition of pseudo-Frobenius elements of affine semigroups over $\mathbb{N}^d.$ In \cite[Theorem 17]{pfelements}, the authors prove that $\MPD$-semigroups are stable with respect to gluing. In Theorem \ref{PFGluing}, we give a description of the set of pseudo-Frobenius elements of a glued semigroup. In \cite{jafari}, the authors introduce the concept of quasi-Frobenius elements, $\QF(S)$, which a generalization of pseudo-Frobenius elements to the context of simplicial affine semigroups. While it is known that $\PF(S) \cap \QF(S) = \emptyset$ in any simplicial $\MPD$-semigroup $S$, we show, via an example, that the cardinality of the two sets may not be equal as well. In other words, unlike Cohen-Macaulay simplicial affine semigroups, the cardinality of $\QF(S)$ may not be equal to the last Betti number.  
It is evident that the complement of $S$ in $\cone(S)$ may not be finite. Set $\Hc(S) = (\cone(S) \setminus S ) \cap \mathbb{N}^d$ and define a relation $\leq$ on $\Hc(S)$ as follows: ${\bf x} \leq {\bf y}$ if ${\bf y} - {\bf x} \in S.$ We show that while $\PF(S)$ may not be the set of maximal elements of $\Hc(S)$ with respect to the order $\leq,$ but if $\Hc(S)$ is finite, then $\PF(S) = \text{Maximals}_{\leq} \Hc(S).$ In section 3, we define $\prec$-symmetric, $\prec$-almost symmetric and $\prec$-pseudo-symmetric ($\MPD$)-semigroups with respect to a term order $\prec.$ The definition of these semigroups is dependent on the existence of the Frobenius element (with respect to some order $\prec$). In Example \ref{FrobNotexist}, we give an example of a class of $\MPD$-semigroups in which $\F(S)_{\substack{\\\prec}}$ does not exist for any term order $\prec.$ When $\Hc(S)$ is finite, $S$ is known as $\Cc$-semigroup. In Theorem \ref{precsymmetricity} and \ref{precPseudosymmetricity}, we give a characterization of $\prec$-symmetric and $\prec$-pseudo-symmetric $\Cc$-semigroups. A different characterization of these semigroups is also given when $S$ is a $\Cc$-semigroup and $\cone(S) \cap \NN^d = \NN^d.$ Using this characterization, in Theorem \ref{Wilf}, we prove that the semigroups in the above setup satisfy the extended Wilf's conjecture.

In section 4, we explore the connection between the Hilbert series of the semigroup ring $k[S]$ and the set of pseudo-Frobenius elements of the $\MPD$-semigroup $S.$ In Theorem \ref{FrobExp}, we prove that if $S$ is a $\Cc$-semigroup and $\cone(S) \cap \NN^d = \NN^d,$ then $F(S)_{\prec}$ is the difference of the exponent of leading term of the numerator of the Hilbert series of $k[S]$ and the generators of the semigroup $S.$  

In section 5, we introduce and study $\RF$-matrices of pseudo-Frobenius elements in $\MPD$-semigroups. In Theorem \ref{detRF}, we prove that the determinant of $\RF$-matrices is zero unless the group generated by $S$ has rank $1.$ The difference of any two rows of an $\RF$-matrix produces a binomial in the defining ideal $I_S$, called an $\RF$-relation. In \cite{herzogWatanabe}, Herzog and Watanabe raised a question - For a numerical semigroup $S$, when is $I_S$ generated by $\RF$-relations? We ask the same question for $\MPD$-semigroups and in Theorem \ref{RFRelation}, we prove that a sufficient condition for the defining ideal to be generated by $\RF$-relations, known for numerical semigroups, can be generalized for $\MPD$-semigroups. Using the description of pseudo-Frobenius elements of gluing of two $\MPD$-semigroups, we obtain a way to write the $\RF$-matrices for the pseudo-Frobenius elements of gluing. It is then natural to ask the following question - Is the property of the defining ideal being generated by $\RF$-relations compatible with gluing? In Theorem \ref{GenRFRelations}, we prove that under suitable assumptions, the defining ideal of a gluing is generated by $\RF$-relations if the initial defining ideals are so.

Section 6 explores another connection between the $\RF$-matrices of pseudo-Frobenius elements and the defining ideal of the semigroup ring. In Theorem \ref{genericRFunique}, we prove that if the defining ideal of the semigroup ring of an $\MPD$-semigroup is generic, then the $\RF$-matrix is unique for each pseudo-Frobenius element. We end the section by proving that the defining ideal of gluing of $\MPD$-semigroups is never generic.

Lastly, the authors would like to point out that while we work in the `negative space' $\Hc(S),$ it seems that most of the theory also works when we restrict to the space $((\cone(S) \cap G(S)) \setminus S) \cap \NN^d.$ The utility of this smaller space is not yet clear to us and we invite the readers to explore this possibility further.

\section{Pseudo-Frobenius elements}

Let $S$ be a finitely generated submonoid of $\NN^d$, say minimally generated by $a_1, \ldots, a_n \subseteq \NN^d.$ Such submonoids are called affine semigroups. Consider the cone of $S$ in $\QQ^d_{\geq 0}$,
\[  \cone(S) := \left\lbrace \sum_{i=1}^n \lambda_i a_i  \mid \lambda_i \in \QQ_{\geq 0}, i = 1, \ldots, n \right\rbrace \] 
and set $\Hc(S) := (\cone(S) \setminus S) \cap \NN^d.$

\begin{Definition} \label{pf}
	An element $f \in \Hc(S)$ is called a pseudo-Frobenius element of $S$ if
	$f + s \in S$ for all $s \in S \setminus \{0\}.$ The set of pseudo-Frobenius elements of $S$ is denoted by $\PF(S).$ In particular,
	\[ \PF(S) = \{ f \in \Hc(S) \mid f + a_j \in S, \ \forall j \in [1,n] \}.  \]  
\end{Definition}

Observe that the set $\PF(S)$ may be empty. Therefore, in this article, we consider such class of rings where $\PF(S)$ is non-empty. 

Let $k$ be a field. The semigroup ring $k[S]$ of $S$ is a $k$-subalgebra of the polynomial ring $k[t_1,\ldots,t_d].$ In other words, $k[S] = k[{\bf t}^{a_1}, \ldots, {\bf t}^{a_n}]$, where ${\bf t}^{a_i} = t_1^{a_{i1}} \cdots t_d^{a_{id}}$ for $a_i = (a_{i1},\ldots,a_{id})$ and for all $i=1,\ldots,n.$ Set $R = k[x_1,\ldots,x_n]$ and define a map $\pi : R \rightarrow k[S]$ given by $\pi(x_i) = {\bf t}^{a_i}$ for all $i=1,\ldots,n.$ Set $\deg x_i = a_i$ for all $i=1,\ldots,n.$ Observe that $R$ is a multi-graded ring and that $\pi$ is a degree preserving surjective $k$-algebra homomorphism. We denote by $I_S$ the kernel of $\pi.$ Then $I_S$ is a homogeneous ideal, generated by binomials, called the defining ideal of $S$. Note that a binomial $\phi = \prod_{i=1}^n x_i^{\alpha_i} - \prod_{j=1}^n x_j^{\beta_j} \in I_S$ if and only if $\sum_{i=1}^n \alpha_i a_i = \sum_{j=1}^n \beta_j a_j.$ With respect to this grading, $\deg \phi = \sum_{i=1}^n \alpha_i a_i.$

\begin{Definition}
	We say that $S$ satisfies the {\bf maximal projective dimension ($\MPD$)} if $\pdim_R k[S] = n-1.$ Equivalently, $\depth_R k[S]=1.$
\end{Definition}

In \cite[Theorem 6]{pfelements}, the authors proved that $S$ is a $\MPD$-semigroup if and only if $\PF(S) \neq \emptyset$. In particular, they prove that if $S$ is a $\MPD$-semigroup then $b \in S$ is the $S$-degree of the $(n-2)$th minimal syzygy of $k[S]$ if and only if $ b \in \left\lbrace a + \sum_{i=1}^n a_i  \mid a \in \PF(S)\right\rbrace$. Moreover, $\PF(S)$ has finite cardinality.

\begin{Example}\label{examplePF(S)}
	Let $S = \langle (2,11), (3,0), (5,9), (7,4) \rangle$. The minimal free resolution of $k[S],$ as a module over $R = k[x_1,\ldots,x_4],$ is 
	\[ 0 \rightarrow R(-(81,93)) \oplus R(-(94,82)) \rightarrow R^6 \rightarrow R^5 \rightarrow R \rightarrow k[S] \rightarrow 0. \]
	In particular, the degrees of minimal generators of the third syzygy modules are $(81,93), (94,82)$. Therefore, $S$ has two pseudo-Frobenius elements $(64,69)$ and $(77,58)$.
\end{Example}

\begin{Example}
	Let $S = \langle a_1,\ldots,a_n \rangle ~ \subseteq \mathbb{N}^d$, where ${a_i}'s$ are linearly independent over $\mathbb{Q}$. Then $k[S]$ is isomorphic to a polynomial ring in $n$ variables over the field $k$. Hence, $S$ is not $\MPD$-semigroup as $\depth_R k[S]=n.$
\end{Example}

\begin{Definition}[{\cite[Theorem 1.4]{rosales97}}]
	Let $S \subseteq \mathbb{N}^d$ be an affine semigroup and $G(S)$ be the group spanned by $S$, that is, $G(S) =\{ a-b \in \mathbb{Z}^d \mid a, b \in S\}$. Let $A$ be the minimal generating system of $S$ and $A = A_1 \amalg A_2$ be a nontrivial partition of $A$ . Let $S_i$ be the submonoid of $\mathbb{N}^d$ generated by $A_i, i \in {1, 2}$. Then $S = S_1 + S_2.$ We say that $S$ is the gluing of $S_1$ and $S_2$ by $d$ if 
	\begin{enumerate}[{\rm(1)}]
		\item $d \in S_1 \cap S_2$ and,
		\item $G(S_1 ) \cap G(S_2) = d\mathbb{Z}$. 
	\end{enumerate}
	If $S$ is a gluing of $S_1$ and $S_2$ by $d,$ we write $S = S_1 +_d S_2$.
\end{Definition}

\begin{Theorem} \label{PFGluing}
	Let $S$ be an affine semigroup such that $S = S_1 +_d S_2,$ where $S_1$ and $S_2$ are $\MPD$-semigroups. Then $S$ is a $\MPD$-semigroup and
	\begin{align*}
		\PF(S)=\{f+g+d \mid f \in \PF(S_1), \ g\in \PF(S_2) \}.
	\end{align*} 
\end{Theorem}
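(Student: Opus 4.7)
Since $S$ being $\MPD$ under gluing is already Theorem 17 of \cite{pfelements}, the plan focuses on the displayed equality. My strategy is: first prove the inclusion $\{f + g + d \mid f \in \PF(S_1),\ g \in \PF(S_2)\} \subseteq \PF(S)$ by direct check of the defining conditions; then prove that the map $(f, g) \mapsto f + g + d$ is injective; then show via the minimal free resolution of $k[S]$ that $|\PF(S)| = |\PF(S_1)|\cdot|\PF(S_2)|$, forcing the forward inclusion to be an equality.

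For the forward inclusion, fix $f \in \PF(S_1)$, $g \in \PF(S_2)$ and set $h := f + g + d$. Clearly $h \in \NN^d$, and $h \in \cone(S)$ since $f \in \cone(S_1) \subseteq \cone(S)$, $g \in \cone(S_2) \subseteq \cone(S)$, and $d \in S \subseteq \cone(S)$. To show $h \notin S$, suppose $h = s_1 + s_2$ with $s_i \in S_i$. Then $(f + d) - s_1 = s_2 - g$ lies in $G(S_1) \cap G(S_2) = d\ZZ$, so equals $kd$ for some $k \in \ZZ$, giving $s_1 = f + (1 - k)d$ and $s_2 = g + kd$. The four cases $k \geq 2$, $k = 1$, $k = 0$ and $k \leq -1$ force, respectively, $f = s_1 + (k-1)d \in S_1$, $s_1 = f \in S_1$, $s_2 = g \in S_2$, or $g = s_2 + (-k)d \in S_2$, each contradicting $f \in \Hc(S_1)$ or $g \in \Hc(S_2)$. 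For the pseudo-Frobenius condition, let $a$ be a minimal generator of $S$; if $a \in S_1$, then $f + a \in S_1$ and $g + d \in S_2$ give $h + a = (f + a) + (g + d) \in S$, and the case $a \in S_2$ is symmetric.

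For injectivity, if $f_1 + g_1 + d = f_2 + g_2 + d$, then $f_1 - f_2 = g_2 - g_1 \in G(S_1) \cap G(S_2) = d\ZZ$; writing this common element as $kd$ and repeating the case analysis forces $k = 0$, so $(f_1, g_1) = (f_2, g_2)$. Consequently $|\PF(S_1)| \cdot |\PF(S_2)| \leq |\PF(S)|$. For the matching upper bound, by Theorem 6 of \cite{pfelements} the cardinality $|\PF(S)|$ equals the rank of the last free module in the minimal graded free resolution of $k[S]$ over $R$. Because $S = S_1 +_d S_2$, one has $k[S] \cong (k[S_1] \otimes_k k[S_2])/(\rho)$ where $\rho = {\bf x}^\alpha - {\bf x}^\beta$ is the binomial recording the two expressions of $d$, and $\rho$ is a non-zero-divisor on $k[S_1] \otimes_k k[S_2]$. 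Hence the minimal graded free resolution of $k[S]$ over $R = R_1 \otimes_k R_2$ is the tensor product of the minimal resolutions of $k[S_1]$ and $k[S_2]$ with the Koszul complex on $\rho$, and its top Betti number equals $|\PF(S_1)| \cdot |\PF(S_2)| \cdot 1$. This forces equality of cardinalities and turns the injection into a bijection.

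The main obstacle is verifying the minimality of this tensor product resolution, so that the top Betti number is not inflated or deflated by cancellations; this is standard for glued toric rings and can be justified by noting that every differential in each factor, together with $\rho$ itself, lies in the graded maximal ideal of $R$.
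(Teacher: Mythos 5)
Your proposal is correct and follows the same overall strategy as the paper's proof: establish the inclusion $\{f+g+d \mid f \in \PF(S_1),\ g\in \PF(S_2)\} \subseteq \PF(S)$, prove injectivity of $(f,g)\mapsto f+g+d$ by the identical case analysis on $f-f'=g'-g=kd \in G(S_1)\cap G(S_2)=d\ZZ$, and conclude by matching cardinalities via the last Betti number of the glued resolution. The only difference is one of self-containment: where the paper simply cites \cite[Theorem 17]{pfelements} for the inclusion and \cite[Theorem 6.1]{hema2019} for $\beta$-$\t(S)=\beta$-$\t(S_1)\cdot\beta$-$\t(S_2)$, you supply direct (and correct) arguments for both.
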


\begin{proof}
	The affine semigroup $S$ is a $\MPD$-semigroup follows from \cite[Theorem 17]{pfelements}.
	Set $T = \{f+g+d \mid f \in \PF(S_1), g\in \PF(S_2) \}$. By \cite[Theorem 17]{pfelements}, we have $T \subseteq \PF(S).$ Also, from \cite[Theorem 6.1]{hema2019} we have $| \PF(S) | = \beta$-$\t(S)= \beta$-$\t(S_1) \cdot \beta$-$\t(S_2) = | \PF(S_1) | \cdot | \PF(S_2) |.$ Therefore, it is sufficient to show that if $f + g + d , f'+ g' + d \in T$ such that $f + g + d = f'+ g' + d,$ then $f=f'$ and $g=g'.$ Let $f + g + d = f'+ g' + d.$ Then
	\[  (f+d) - (f'+d) = f - f' = g' - g = (g'+d)-(g+d).\] 
	Since $d \in S_1 \cap S_2,$ we get $f - f' = g'-g \in G(S_1) \cap G(S_2).$ Hence there exist a $k \in \mathbb{Z}$ such that  $f - f' = g'-g = kd.$ If $k < 0,$ then $f' = f-kd \in S_1,$ which is a contradiction. Now suppose $k > 0.$ Then $g' = g + kd \in S_2,$ which is again a contradiction. Hence $k=0$ implying that $f - f' = g'-g = 0.$
\end{proof}

Recall that a gluing two numerical semigroups is defined as follows:
\begin{Definition}  \label{NSgluing}
	Let $H_1$ and $H_2$ be two numerical semigroups minimally generated by $n_1,\ldots,n_r$ and $n_{r+1},\ldots,n_e$ respectively. Let $\lambda \in H_1 \setminus \{ n_1,\ldots,n_r \}$ and $\mu \in H_2 \setminus \{ n_{r+1}, \ldots, n_e \}$ be such that $\gcd(\lambda,\mu)=1.$ We say that $S = \langle \mu n_1, \ldots, \mu n_r, \lambda n_{r+1}, \ldots, \lambda n_{e} \rangle$ is a gluing of $H_1$ and $H_2.$
\end{Definition}

In other words, if $S$ is as in the definition above, then $S = S_1 +_{\mu\lambda} S_2,$ where $S_1 = \langle \mu n_1, \ldots, \mu n_r \rangle$ and $S_2 = \langle \lambda n_{r+1}, \ldots, \lambda n_{e} \rangle.$ Since $\PF(S_1) = \mu \PF(H_1)$ and $\PF(S_2) = \lambda \PF(H_2)$, the following result now follows as a corollary to Theorem \ref{PFGluing}.

\begin{Corollary}[{\cite[Proposition 6.6]{nari}}]
	If $S = \langle \mu H_1, \lambda H_2 \rangle$ (as in Definition {\rm \ref{NSgluing}}), then
	\[ \PF(S) = \{ \mu f + \lambda g + \mu \lambda \mid f \in \PF(H_1), g \in \PF(H_2) \}. \]
\end{Corollary}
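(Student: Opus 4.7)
The plan is to apply Theorem~\ref{PFGluing} directly to the decomposition $S = S_1 +_{\mu\lambda} S_2$, where $S_1 = \langle \mu n_1, \ldots, \mu n_r \rangle$ and $S_2 = \langle \lambda n_{r+1}, \ldots, \lambda n_e \rangle$, exactly as indicated in the remark preceding the corollary. To invoke that theorem I first have to check three things: that this is truly a gluing of affine semigroups, that $S_1$ and $S_2$ are $\MPD$-semigroups, and that $\PF(S_i)$ scales the way the statement claims.

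For the gluing check, membership $\mu\lambda \in S_1 \cap S_2$ is immediate from $\lambda \in H_1$ (so $\mu\lambda \in \mu H_1 = S_1$) and $\mu \in H_2$ (so $\mu\lambda \in \lambda H_2 = S_2$). Since each $H_i$ is a numerical semigroup, $G(H_i) = \mathbb{Z}$ and hence $G(S_1) = \mu\mathbb{Z}$ and $G(S_2) = \lambda\mathbb{Z}$; the assumption $\gcd(\mu,\lambda)=1$ then gives $G(S_1) \cap G(S_2) = \operatorname{lcm}(\mu,\lambda)\mathbb{Z} = \mu\lambda\mathbb{Z}$, so $S = S_1 +_{\mu\lambda} S_2$.

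The substantive step is the identity $\PF(S_1) = \mu\,\PF(H_1)$ (and symmetrically $\PF(S_2) = \lambda\,\PF(H_2)$). Every element of $S_1$ is a multiple of $\mu$, so if $f \in \PF(S_1)$, then $f + \mu n_1 \in S_1$ forces $\mu \mid f$. Writing $f = \mu f'$ with $f' \in \mathbb{N}$, the condition $f + \mu n_i \in \mu H_1$ for $i=1,\ldots,r$ is equivalent to $f' + n_i \in H_1$, and $f \notin S_1$ is equivalent to $f' \notin H_1$; together these say $f' \in \PF(H_1)$. The reverse inclusion is the same computation read backwards. In particular $\PF(S_i) \ne \emptyset$ (because $\F(H_i) \in \PF(H_i)$), so $S_1$ and $S_2$ are $\MPD$-semigroups and Theorem~\ref{PFGluing} applies.

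Assembling these pieces, Theorem~\ref{PFGluing} gives
\[
\PF(S) = \{ F + G + \mu\lambda \mid F \in \PF(S_1),\ G \in \PF(S_2) \},
\]
and substituting $F = \mu f$ with $f \in \PF(H_1)$ and $G = \lambda g$ with $g \in \PF(H_2)$ yields the formula in the corollary. I do not expect any real obstacle here; the entire argument beyond citing Theorem~\ref{PFGluing} is the short verification that scaling a numerical semigroup by a positive integer scales its pseudo-Frobenius set by the same factor.
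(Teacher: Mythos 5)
Your proposal is correct and follows essentially the same route as the paper, which likewise deduces the corollary from Theorem~\ref{PFGluing} via the decomposition $S = S_1 +_{\mu\lambda} S_2$ and the scaling identities $\PF(S_1) = \mu\,\PF(H_1)$, $\PF(S_2) = \lambda\,\PF(H_2)$. You merely supply the routine verifications (the gluing conditions and the divisibility argument showing $\mu \mid f$ for $f \in \PF(S_1)$) that the paper leaves implicit, and these are all sound.
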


Recall that a term order (also known as monomial ordering) on $\NN^d$ is a total order compatible with the addition of $\NN^d.$

\begin{Definition}
	Let $\prec$ be a term order on $\NN^d.$ Then $\F(S)_{\substack{\\\prec}} = \max_{\substack{\\\prec}} \Hc(S)$, if it exists, is called the Frobenius element of $S$ with respect to the term order $\prec.$ In particular, 
	\[ \F(S) = \{ \F(S)_{\substack{\\\prec}} = \max_{\prec} \Hc(S) \mid \; \prec \text{ is a term order } \}. \]
\end{Definition}
	
We write $\F(S)$ for the set of Frobenius elements of $S.$ Note that Frobenius elements may not exist. However, if $| \Hc(S) | < \infty$, then Frobenius elements do exist. Also, from \cite[Lemma 12]{pfelements}, we have that every Frobenius element is a pseudo-Frobenius element, i.e., $\F(S) \subseteq \PF(S).$

Another generalization of pseudo-Frobenius elements to the context of simplicial affine semigroups is introduced and studied in \citep{jafari}. The authors define quasi-Frobenius elements over a simplicial affine semigroup $S$ and show that the cardinality of the set of quasi-Frobenius elements equals the Cohen-Macaulay type of the semigroup ring when $k[S]$ is Cohen-Macaulay. This result also motivates the authors to call the cardinality of this set as the type of $S.$ Let $S = \langle a_1,\ldots,a_d, a_{d+1},\ldots,a_n \rangle \subseteq \NN^d$ be a simplicial affine semigroup and let $a_1,\ldots,a_d$ be situated on each extremal ray of $\cone(S).$ Let $\preceq_S$ denote the partial order on $\NN^d$ where for all elements $x,y \in \NN^d$, $x \preceq_S y$ if $y-x \in S.$ Then the set of quasi-Frobenius elements,
\[ \QF(S) = \left\lbrace w - \sum_{i=1}^{d} a_i \mid w \in \max_{\preceq_S} \cap_{i=1}^{d} \Ap(S,a_i) \right\rbrace. \]
From \cite[Remark 3.2]{jafari}, we know that in simplicial $\MPD$-semigroups, $\PF(S) \cap \QF(S) = \emptyset.$ In the following example, we show that the cardinality of the two sets may not be equal as well. In particular, type$(S)$ may not be always be equal to $\beta$-$\t(S).$
\begin{Example}{\rm 
	Let $h \in \NN$ such that $h \geq 2.$ Put $n_1 = (2h-1)2h$, $n_2 =(2h-1)(2h+1)$, $n_3=2h(2h+1),$ and $n_4=2h(2h+1)+2h-1$ and consider the semigroup
	\[ S = \langle (0,n_4),(n_1,n_4-n_1),(n_2,n_4-n_2),(n_3,n_4-n_3), (n_4,0) \rangle. \]
	($S$ is the semigroup corresponding to the projective closure of the Bresinsky curve, introduced by H. Bresinsky in {\rm\cite{bresinsky1,bresinsky2}}).
	From {\rm\cite[Lemma 6.3]{pranjal-sengupta}}, we know that 
	\begin{align*}
		I_S = \bigg\langle 
		\begin{array}{c}
		x_3x_4 - x_2x_5, \ x_3^{2h} - x_1x_4^{2h-1}, \ x_4^{4h} - x_3^{2h-1}x_5^{2h+1}, \\[1mm] 
		x_2^jx_4^{2h-j+1} - x_1x_3^{j-1}x_5^{2h-j+1}, \ x_2^{j+1}x_3^{2h-j} - x_1^2x_4^{j-1}x_5^{2h-j} \mid 1 \leq j \leq 2h 
		\end{array}
		\bigg\rangle.
	\end{align*}
	Consequently, 
	\[ I_S + \langle x_1, x_5 \rangle = \langle x_1, \ x_5, \ x_2^{2h+1}, \ x_3^{2h}, \ x_4^{4h}, \ x_3x_4, \ x_2^jx_4^{2h-j+1}, \ x_2^{j+1}x_3^{2h-j} \mid 1 \leq j \leq 2h \rangle. \]
	Hence, the image of 
	\[ \mathscr{B} = \left\lbrace 
	\begin{array}{c}
	\bigcup_{i=1}^{2h-1} \{x_2^jx_3^i, \ x_2^jx_4^i \mid 1 \leq j \leq 2h-i \} \cup \{x_2^j \mid 0 \leq j \leq 2h \} \\[1mm]
	\cup \{x_3^j \mid 1 \leq j \leq 2h-1 \} \cup \{x_4^j \mid 1 \leq j \leq 4h-1 \} 
	\end{array}
	\right\rbrace \]
	gives a $k$-basis of $k[S]/\langle x_1,x_5 \rangle = k[x_1,\ldots,x_5]/(I_S+\langle x_1,x_5 \rangle).$ Since $\Ap(S,(0,n_4)) \cap \Ap(S,(n_4,0)) = \{ \deg u \mid u \in \mathscr{B} \},$ we get
	\begin{align*}
		&\Ap(S,(0,n_4)) \cap \Ap(S,(n_4,0)) \\
		&= \left\lbrace
		\begin{array}{c}
			0, \ \bigcup_{i=1}^{2h-1} \{ j(n_1, n_4-n_1) + i(n_2, n_4-n_2), \ j(n_1, n_4-n_1) + i(n_3, n_4-n_3) \mid 1 \leq j \leq 2h-i \} \\[1mm]
			\cup \{a (n_1,n_4-n_1), \ b(n_2,n_4-n_2), \ c(n_3,n_4-n_3) \mid 1 \leq a \leq 2h, \ 1 \leq b \leq 2h-1, \ 1 \leq c \leq 4h-1 \} 
		\end{array}
		\right\rbrace.
	\end{align*} 
	Therefore,
	\begin{align*}
		&\max_{\preceq_S} \big( \Ap(S,(0,n_4)) \cap \Ap(S,(n_4,0)) \big)  \\
		&= \left\lbrace
		\begin{array}{c}
			\{(2h-i)(n_1,n_4-n_1) + i(n_2,n_4-n_2), \ (2h-i)(n_1,n_4-n_1) + i (n_3, n_4-n_3) \mid 1 \leq i \leq 2h-1 \}  \\[1mm]
			\cup \{ 2h(n_1,n_4-n_1), (4h-1)(n_3,n_4-n_3) \}
		\end{array} 
		\right\rbrace 
	\end{align*}
	implying that $|\QF(S)| = |\max_{\preceq_S} \big( \Ap(S,(0,n_4)) \cap \Ap(S,(n_4,0)) \big)| = 4h.$ Whereas, $|\PF(S)| = \beta$-$\t(S)=1$ (see {\rm\cite[Theorem 6.4]{pranjal-sengupta}}).
}\end{Example}

On $\Hc(S),$ we define a relation : ${\bf x} \leq {\bf y} $ if ${\bf y} - {\bf x} \in S.$ It is a partial order (reflexive, anti-symmetric and transitive) on $\Hc(S)$. In \cite[Proposition 2.19]{numerical}, the authors proved that if $S$ is a numerical semigroup, then $\PF(S) = \text{Maximals}_{\leq} (\ZZ \setminus S).$ In other words, $x \in \ZZ \setminus S$ if and only if $f - x \in S$ for some $f \in \PF(S).$ However, in the case of $\MPD$-semigroups over $\NN^d,$ $d \geq 2,$ we observe that this result is not true.
\begin{Example}\label{f-xnotinS}
	Let $S$ be the semigroup generated by the columns of the following matrix
	\begin{align*} 
		\left(
		\begin{tabular}{cccccccccccc}
			18 & 18 & 4 & 20 & 23 & 8 & 11 & 11 & 10 & 14 & 7 & 7 \\
			9 & 3 & 1 & 8 & 10 & 3 & 5 & 2 & 3 & 3 & 2 & 3 
		\end{tabular}
		\right).
	\end{align*}
Using Macaulay2, we get that 
\begin{align*}
0 \longrightarrow R(-(164,56)) \longrightarrow R^{18} \longrightarrow R^{120} \longrightarrow R^{441} \longrightarrow R^{1037} \longrightarrow R^{1671} \longrightarrow R^{1898} \longrightarrow R^{1519} \\
\longrightarrow R^{831} \longrightarrow R^{287} \longrightarrow R^{50} \longrightarrow R \longrightarrow k[S] \longrightarrow 0
\end{align*}
 is a minimal graded free resolution of $k[S]$ over $R$, where $R = k[x_1, \ldots, x_{12}]$.	
	Therefore, $S$ is a $\MPD$-semigroup and the degree of minimal generator of the  last syzygy module is $(164,56)$. Hence $S$ has only one pseudo-Frobenius element, which is: $(164,56) - \sum_{i=1}^{12} C_i^{T}$, where $C_i^{T}$ denote the transpose of the $i^{\mathrm{th}}$ column of the given matrix. Thus, $\PF(S) = \{ (13,4) \}$ (also see \cite[Example 5]{pfelements}). Observe that $(15,7) \in \cone(S) \setminus S$ but $(13,4) - (15,7)  = (-2,-3) \notin S.$ 
\end{Example}

This anomaly arises as $\Hc(S)$ may not always be a finite set for a $\MPD$-semigroup $S$ and hence the existence of maximal elements is not always guaranteed. However, we can still talk about maximal finite chains in $\Hc(S).$ Let $\mathscr{C}$ denote the set of all maximal finite chains (totally ordered subsets) in $\Hc(S).$ Let $c_f \in \mathscr{C}$ denote the chain with $f$ as its maximum element.

\begin{Proposition}
	Let $S$ be a $\MPD$-semigroup. Then the set map $\psi : \mathscr{C} \rightarrow \PF(S)$ defined by $\psi(c_f) = f$ is a surjective map. 
\end{Proposition}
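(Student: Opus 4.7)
The plan is to handle well-definedness and surjectivity in turn; the first part just verifies that the maximum of a maximal finite chain automatically lands in $\PF(S)$, and the second part constructs, for a given pseudo-Frobenius element, an explicit maximal finite chain with that element at the top.

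First I would address well-definedness: if $c_f$ is a maximal finite chain in $\Hc(S)$ with top element $f$, then for any $s \in S \setminus \{0\}$ the element $f + s$ lies in $\cone(S) \cap \NN^d$, so it is either in $S$ or in $\Hc(S)$. If $f + s \in \Hc(S)$, then $f + s > f$ in the order on $\Hc(S)$ (since $(f+s) - f = s \in S \setminus \{0\}$), and so $c_f \cup \{f+s\}$ would be a strictly longer finite chain, violating maximality. Hence $f + s \in S$ for every $s \in S \setminus \{0\}$, i.e.\ $\psi(c_f) = f \in \PF(S)$.

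For surjectivity, fix $f \in \PF(S)$. The key observation is that $f$ is a maximal element of $\Hc(S)$ under $\leq$: any $y \in \Hc(S)$ with $y > f$ would give $y - f \in S \setminus \{0\}$, forcing $y = f + (y - f) \in S$ by the pseudo-Frobenius property, contradicting $y \in \Hc(S)$. Next I would introduce the down-set $L_f = \{x \in \Hc(S) : x \leq f\}$. Since $x \leq f$ forces $f - x \in S \subseteq \NN^d$, each such $x$ is bounded componentwise by $f$, so $L_f$ is contained in the finite box $\prod_{i=1}^d \{0,1,\ldots,f_i\}$ and is therefore finite. Pick any maximal chain in the finite poset $L_f$ containing $f$; call it $c_f$. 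It is automatically finite, and its top is $f$ since $f = \max L_f$.

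Finally I would check that $c_f$ is genuinely maximal in $\mathscr{C}$, i.e.\ admits no extension inside $\Hc(S)$. Any potential $y \in \Hc(S) \setminus c_f$ compatible with the chain must be comparable with $f$; the case $y > f$ is excluded because $f$ is maximal in $\Hc(S)$, and $y = f$ is excluded since $f \in c_f$, so $y < f$ and hence $y \in L_f$. But then adjoining $y$ to $c_f$ would produce a strictly larger chain in $L_f$, contradicting maximality of $c_f$ there. I do not anticipate a real obstacle: the only point that deserves care is the finiteness of $L_f$ (so that a maximal chain inside it exists and is automatically finite), and this follows immediately from the componentwise bound $x \leq f$.
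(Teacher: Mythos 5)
Your proof is correct and follows the same basic strategy as the paper's: well-definedness by observing that $f+s\in\Hc(S)$ would let you extend the chain, and surjectivity by noting that a pseudo-Frobenius element must sit at the top of any chain through it. Where you go beyond the paper is in actually justifying that a \emph{maximal} finite chain containing a given $f\in\PF(S)$ exists: the paper only asserts "there is a finite chain $c$ in $\Hc(S)$ which contains $f$" and argues $f$ is its maximum, leaving implicit why such a chain can be taken to be maximal in $\mathscr{C}$. Your argument via the down-set $L_f=\{x\in\Hc(S): x\leq f\}$ --- finite because $x\leq f$ forces $f-x\in S\subseteq\NN^d$ and hence a componentwise bound --- cleanly supplies this missing existence step and also shows that every element of $\mathscr{C}$ through $f$ lives inside a finite box. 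The only point to watch, which you handle correctly, is that extending a chain requires comparability with \emph{all} of its elements, not just with $f$; since comparability with $f$ already forces $y<f$ and hence $y\in L_f$, your reduction to maximality inside the finite poset $L_f$ closes the argument.
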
 

\begin{proof}
	Let $c_f \in \mathscr{C}.$ Suppose $\psi(c_f) = f \notin \PF(S).$ Then there exists $s \in S$ such that $f+s \notin S.$ In particular, $f+s \in \Hc(S),$ which is a contradiction to the choice of $f.$ Therefore, $\psi(c_f) \in \PF(S).$ In order to prove that $\psi$ is a surjective map, let $f \in \PF(S).$ Since $(\Hc(S), \leq)$ is a poset, there is a finite chain $c$ in $\Hc(S)$ which contains $f.$ We claim that $f$ is the maximum element of the chain $c.$ Suppose not. Then there exists an $f' \in \Hc(S),$ $f' \neq f$ such that $f \leq f'.$ It would imply $f'-f \in S$ and so there exists $s \in S$ such that $f'- f = s.$ This implies $f' = f+s \in S$, which is a contradiction.
\end{proof}

\begin{Example}
Let $S$ be as in Example \ref{f-xnotinS} (also see the figure given in \cite[Example 5]{pfelements}). Some finite maximal chains in $\Hc(S)$ with $(13,4)$ as its maximal element are
\[
\begin{array}{ll}
	(2,1) \leq (6,2) \leq (13,4), & \quad (3,1) \leq (13,4), \\
	(2,1) \leq (9,3) \leq (13,4), & \quad (5,1) \leq (13,4), \\
	(5,2) \leq (9,3) \leq (13,4), & \quad (6,1) \leq (13,4).
\end{array}
\]
\end{Example}

\begin{Remark}
	For  $f \in \PF(S),$ let $ \psi^{-1}(f) = [c_f].$ Then for any $x \in \Hc(S),$
	\[ x \leq f  \; \iff \;  x \text{ is an element of a chain in } [c_f]. \]
\end{Remark}

Observe that if $\Hc(S)$ is a finite set and $\Hc(S) \neq \emptyset$, then $\PF(S) \neq \emptyset.$ In particular, if $\Hc(S)$ is finite, then the following result holds.

\begin{Corollary}  \label{MaxPseudoFrob}
	Let $\Hc(S)$ be a non-empty finite set. Then
	\begin{enumerate}[{\rm(i)}]
		\item $\PF(S) = \Maximals_{\leq} \ \Hc(S).$
		\item Let ${\bf x} \in \NN^d.$ Then ${\bf x} \in \Hc(S)$ if and only if $f-{\bf x} \in S$ for some $f \in \PF(S).$
	\end{enumerate}
\end{Corollary}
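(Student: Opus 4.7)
The plan is to deduce both parts from the preceding Proposition together with the finiteness of $\Hc(S)$, which ensures every element of the poset $(\Hc(S),\leq)$ is dominated by a maximal element. I would prove (i) by a pair of inclusions and (ii) by handling the two directions of the biconditional separately.

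For (i), the inclusion $\PF(S) \subseteq \Maximals_{\leq} \Hc(S)$ does not use finiteness: given $f \in \PF(S)$ and any $g \in \Hc(S)$ with $f \leq g$, one has $g-f \in S$, and if $g \neq f$ then $g-f \in S \setminus \{0\}$, so the pseudo-Frobenius property forces $g = f + (g-f) \in S$, contradicting $g \in \Hc(S)$. For the reverse inclusion, take a maximal $f \in \Hc(S)$ and any $s \in S \setminus \{0\}$; additivity of $\cone(S)$ places $f+s \in \cone(S) \cap \NN^d$, and were $f+s \notin S$, then $f+s \in \Hc(S)$ would strictly exceed $f$, violating maximality. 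Hence $f+s \in S$ for every $s \in S \setminus \{0\}$, putting $f \in \PF(S)$.

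For (ii) $(\Rightarrow)$, I would iteratively ascend the finite poset $(\Hc(S), \leq)$ starting from ${\bf x}$ until reaching a maximal element $f \geq {\bf x}$; by (i) this $f$ is a pseudo-Frobenius element, and $f - {\bf x} \in S$ by the definition of $\leq$. For (ii) $(\Leftarrow)$, set $s := f - {\bf x} \in S$: if $s=0$ then ${\bf x} = f \in \PF(S) \subseteq \Hc(S)$; if $s \neq 0$, the identity $f = {\bf x} + s$ and closure of $S$ under addition rule out ${\bf x} \in S$ (else $f \in S$, contradicting $f \in \Hc(S)$), and the remaining membership ${\bf x} \in \cone(S) \cap \NN^d$ should follow using that $f$ is pseudo-Frobenius together with the $\Cc$-semigroup hypothesis.

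The step I expect to be most delicate is exactly this last verification that ${\bf x} \in \cone(S)$ in (ii) $(\Leftarrow)$: the difference of two elements of a rational polyhedral cone need not lie in the cone in general, so one cannot simply subtract $s$ from $f$ within $\cone(S)$. The intended resolution is to exploit the specific structure of $f$ (namely that $f + ks \in S$ for all $k \geq 1$ via iterated application of the pseudo-Frobenius property) together with the finiteness of $\Hc(S)$, which forces $S$ to eventually cover every lattice direction inside $\cone(S)$, to conclude that ${\bf x}$ cannot lie outside $\cone(S)$.
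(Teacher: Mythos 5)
Your proof of part (i) and of the forward direction of part (ii) is correct and is essentially the argument the paper intends: the corollary is stated as an immediate consequence of the preceding Proposition (maximal elements of $(\Hc(S),\leq)$ are exactly the pseudo-Frobenius elements, and finiteness guarantees every element of $\Hc(S)$ is dominated by a maximal one), and your two inclusions plus the ascent argument reproduce exactly that. Note also that the converse direction of (ii) never needs finiteness: once ${\bf x}$ is known to lie in $\cone(S)\cap\NN^d$, the identity $f={\bf x}+(f-{\bf x})$ with $f-{\bf x}\in S$ rules out ${\bf x}\in S$, and that is all that is required.

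The step you flagged as delicate --- verifying ${\bf x}\in\cone(S)$ in (ii) $(\Leftarrow)$ from the mere hypothesis ${\bf x}\in\NN^d$ --- is a genuine gap, and no argument of the kind you sketch can close it, because the membership can actually fail. Take $S=\langle (1,0),\,(2,2),\,(3,3),\,(5,1)\rangle\subseteq\NN^2$, whose cone is $\{(x,y): 0\le y\le x\}$. One checks that $\Hc(S)=\{(1,1),(2,1),(3,1),(4,1)\}$ is finite and that $\PF(S)=\{(4,1)\}$ (indeed $(4,1)+s\in S$ for every $s\in S\setminus\{0\}$, while $(k,1)\notin S$ for $1\le k\le 4$). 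Now ${\bf x}=(0,1)\in\NN^2$ satisfies $(4,1)-{\bf x}=(4,0)=4\cdot(1,0)\in S$, yet ${\bf x}\notin\cone(S)$ and hence ${\bf x}\notin\Hc(S)$. So the heuristic that finiteness of $\Hc(S)$ ``forces $S$ to cover every lattice direction inside $\cone(S)$'' does not rescue the step: the difference $f-s$ of a pseudo-Frobenius element and a semigroup element can leave the cone while staying in $\NN^d$. The statement is only correct if one reads the hypothesis as ${\bf x}\in\cone(S)\cap\NN^d$ (which is how it is applied later, e.g.\ in the characterizations of $\prec$-symmetric and $\prec$-pseudo-symmetric semigroups, where the test element $g$ is always taken from $\cone(S)\cap\NN^d$); under that reading your argument for $(\Leftarrow)$ is complete without any appeal to finiteness, and the delicate verification you were worried about disappears.
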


\section{$\prec$-symmetric semigroups}

Let $S$ be a $\MPD$-semigroup. For a term order $\prec$ on $\NN^d,$ we define $\prec$-symmetric, $\prec$-almost symmetric and $\prec$-pseudo-symmetric semigroups. Note that our definitions are dependent on the existence of the Frobenius element with respect to the order $\prec.$

\begin{Definition}
	In the case $\F(S) \neq \emptyset$, we fix a term order $\prec$ such that $\F(S)_{\substack{\\\prec}} = \max_{\substack{\\\prec}} \Hc(S) \in \F(S).$
	\begin{enumerate}[{\rm (1)}]
		\item If $|\PF(S)|=1$ and $\PF(S) = \{\F(S)_{\substack{\\\prec}}\}$, then $S$ is called a { \bf $\prec$-symmetric semigroup}. In particular, if $\beta$-$\t(S)=1$ and Frobenius element exists, then $S$ is a  $\prec$-symmetric semigroup.
		
		\item Put $\PF_{\substack{\\\prec}}'(S) = \PF(S) \setminus \{ \F(S)_{\substack{\\\prec}} \}.$ If $\PF_{\substack{\\\prec}}'(S) \neq \emptyset$ and if for any $g \in \PF_{\substack{\\\prec}}'(S)$, $\F(S)_{\substack{\\\prec}} - g \in \PF_{\substack{\\\prec}}'(S)$, we say that $S$ is {\bf $\prec$-almost symmetric}. Further, if $\beta$-$\t(S)=2$, then $S$ is called {\bf $\prec$-pseudo-symmetric}. In this case, $\PF(S) = \{ \F(S)_{\substack{\\\prec}}, \F(S)_{\substack{\\\prec}}/2 \}.$ 
	\end{enumerate}
\end{Definition}

Observe that when $S$ is $\prec$-pseudo-symmetric, for some term order $\prec,$ then $\F(S)_{\substack{\\\prec}}$ has all even coordinates. Hereafter $\F(S)_{\substack{\\\prec}}$ is even means that $\F(S)_{\substack{\\\prec}}$ has all even coordinates. If $\beta$-$\t(S)=1$ and $\Hc(S)$ is a finite set then $S$ is a symmetric semigroup with respect to any term order $\prec.$ 

\begin{Example}\label{symmetricexample}
	The semigroup $S_1 = \langle (0,1),(3,0),(5,0),(1,3),(2,3) \rangle$ is a $\prec$-symmetric semigroup as $\PF(S_1) = \{(7,2)\}$ and $(7,2) = \max_{\prec} \Hc(S_1)$, where $\prec$ is a graded lexicographic order.
	Similarly, one may check that $S_2 = \langle (0,1),(2,0),(3,0),(1,3) \rangle$ is a symmetric semigroup with $\F(S_2)_{\substack{\\\prec}} = (1,2).$ \\
	The semigroup $S = \langle (0,1),(3,0),(4,0),(1,4),(5,0),(2,7) \rangle$ is $\prec$-almost symmetric semigroup with $\beta$-type $ = 2$ as $\PF(S) = \{(1,3),(2,6)\}$ and $(2,6) = \max_{\prec} \Hc(S)$, where $\prec$ is a graded lexicographic order. In particular, $S$ is a $\prec$-pseudo-symmetric semigroup.
\end{Example}

Note that in the case of affine semigroups, existence of Frobenius element cannot be guaranteed, even when $\beta$-type of the ring is one. The following example presents a class of rings where the Frobenius element does not exist.

\begin{Example}  \label{FrobNotexist}
	{\rm 
		Let $\bar{S}$ denote the semigroup corresponding to the projective closure of the Bresinsky curves. Then 
		\[ \bar{S} = \langle (0,n_4),(n_1,n_4-n_1),(n_2,n_4-n_2),(n_3,n_4-n_3), (n_4,0) \rangle,\]
		where, for $h \geq 2,$ $n_1 = (2h-1)2h$, $n_2 =(2h-1)(2h+1)$, $n_3=2h(2h+1),$ and $n_4=2h(2h+1)+2h-1.$ Let $R=k[x_1,\ldots,x_5].$ Using \cite[Theorem 6.4]{pranjal-sengupta}, we have a graded minimal free resolution of $I_{\bar{S}}$,
		\begin{center}
			$0 \rightarrow R \rightarrow R^{4h+3} \rightarrow R^{8h+4} \rightarrow R^{4h+3} \rightarrow R \rightarrow R/I_{\bar{S}} \rightarrow 0$,
		\end{center}
		implying that $\pdim_Rk[\bar{S}] = 4$ and hence $\depth_R k[\bar{S}]=1.$ Therefore, $\bar{S}$ is a $\MPD$-semigroup. Moreover, $\beta$-$\t(S)=1.$ Using the computation of the syzygies in \cite[Notation 6.2, Theorem 6.4]{pranjal-sengupta}, one obtains the degree of the last syzygy, namely, $d = (16h^3+16h^2-2h-1, 12h^2+10h-2)$. From the computation of the pseudo-Frobenius element from the degree of the last syzygy in a $\MPD$-semigroup (see \cite[Theorem 6]{pfelements}), we obtain $\PF(\bar{S}) = \{f=(16h^3-6h+1, 8h^2-6h+1)\}.$ We prove that $f$ is not a Frobenius element of $\bar{S}$ with respect to any term order $\prec$. 
		
		First, note that the group generated by $\bar{S},$ $G(\bar{S}) = \{(a,b) \in \ZZ^2 \mid a+b \in n_4 \ZZ \}$ (see \cite[Lemma 4.1]{cavaliereNiesi}) and $\cone(\bar{S}) \cap \NN^2 = \NN^2$. Put $\Gamma(\bar{S}) = (G(\bar{S}) \setminus \bar{S}) \cap \NN^2.$ Since $f \in \Gamma(\bar{S}) \subsetneq \cone(\bar{S}),$ it is enough to show that for any term order $\prec,$ $f \neq \max_{\prec} \Gamma(\bar{S})$. Let $S_1$ and $S_2$ denote the projections of $\bar{S}$ onto its first and second components respectively. Then $S_1$ and $S_2$ are numerical semigroups. Let $F(S_1)$ and $F(S_2)$ denote the Frobenius numbers of $S_1$ and $S_2$ respectively. Choose $b_1 \in \NN$ such that $F(S_1)+b_1 = \alpha_1 n_4$, for some $\alpha_1 \in \NN.$ Similarly, choose $a_2 \in \NN$ such that $F(S_2)+a_2 = \alpha_2 n_4$, for some $\alpha_2 \in \NN.$ Then $(F(S_1),b_1), (a_2,F(S_2)) \in \Gamma(\bar{S}).$ Suppose $f = (f_1,f_2) = \max_{\prec} \Gamma(\bar{S})$ with respect to some term order $\prec.$ This implies that $(F(S_1),b_1) \prec f$ and $(a_2,F(S_2)) \prec f$ and hence $(F(S_1)+a_2, F(S_2)+b_1) \prec (2f_1,2f_2).$ Choose $a_2$, $b_1$ such that $F(S_1)+a_2 > 2f_1$ and $F(S_2)+b_1 > 2f_2.$ Since $\prec$ is a term order, it follows that $(2f_1,2f_2) \prec (F(S_1)+a_2, F(S_2)+b_1),$ giving a contradiction to the anti-symmetry of the order.
}\end{Example}

Observe that the above argument holds for any $\MPD$-semigroup in $\NN^2$ with $\beta$-type 1, that is corresponding to the projective closure of a numerical semigroup. 

As is in the case of numerical semigroups, gluing of two almost (pseudo)-symmetric semigroups is not almost (pseudo)-symmetric.

\begin{Remark}
	Gluing of pseudo-symmetric semigroups is not pseudo-symmetric. Let $S = S_1+_d S_2,$ where $S_1$ and $S_2$ are pseudo-symmetric semigroups. Then $\beta$-$\t(S_1) = \beta$-$\t(S_2) = 2$ and therefore $\beta$-$\t(S) = \beta$-$\t(S_1) \cdot \beta$-$\t(S_2) =4$ (see \cite[Theorem 6.1]{hema2019}). Hence, $S$ is not pseudo-symmetric.
\end{Remark}

\begin{Proposition}
	Gluing of two $\prec$-almost symmetric semigroups is not $\prec$-almost symmetric.
\end{Proposition}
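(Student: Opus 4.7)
The plan is a proof by contradiction: assume $S = S_1 +_d S_2$ is $\prec$-almost symmetric, and exhibit an $h \in \PF'_{\prec}(S)$ with $\F(S)_{\prec} - h \notin \PF(S)$. The first step is to pin down $\F(S)_{\prec}$. Since all three of $S, S_1, S_2$ are $\prec$-almost symmetric, each of $\F(S)_{\prec}, \F(S_1)_{\prec}, \F(S_2)_{\prec}$ exists and lies in the corresponding $\PF$ set. By Theorem \ref{PFGluing}, every element of $\PF(S)$ has the form $f + g + d$ with $f \in \PF(S_1)$, $g \in \PF(S_2)$; since $f \preceq \F(S_1)_{\prec}$ and $g \preceq \F(S_2)_{\prec}$ and $\prec$ is compatible with addition, I would conclude
\[ \F(S)_{\prec} = \F(S_1)_{\prec} + \F(S_2)_{\prec} + d. \]

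Since $S_2$ is $\prec$-almost symmetric, $\PF'_{\prec}(S_2)$ is non-empty; I would pick any $g \in \PF'_{\prec}(S_2)$, set $h := \F(S_1)_{\prec} + g + d$, and observe that $h \in \PF(S)$ with $h \neq \F(S)_{\prec}$ (by the uniqueness of decomposition established in the proof of Theorem \ref{PFGluing}, since $g \neq \F(S_2)_{\prec}$). Thus $h \in \PF'_{\prec}(S)$, and the assumed $\prec$-almost symmetry of $S$ would force
\[ \F(S)_{\prec} - h \;=\; \F(S_2)_{\prec} - g \;\in\; \PF'_{\prec}(S) \subseteq \PF(S). \]

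The main obstacle is to rule out this last containment; intuitively, the ``$+d$'' present in every element of $\PF(S)$ has vanished from the difference $\F(S_2)_{\prec} - g$. Suppose toward contradiction that $\F(S_2)_{\prec} - g = f' + g' + d$ for some $f' \in \PF(S_1)$ and $g' \in \PF(S_2)$. Since $f' + s \in S_1$ for any $s \in S_1 \setminus \{0\}$, one has $f' = (f'+s) - s \in G(S_1)$, and analogously $g' \in G(S_2)$; also $d, \F(S_2)_{\prec}, g \in G(S_2)$. Rearranging gives $f' = (\F(S_2)_{\prec} - g) - (g' + d) \in G(S_2)$, whence $f' \in G(S_1) \cap G(S_2) = d\ZZ$. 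Writing $f' = kd$ with $k \in \ZZ$, noting that $f' \neq 0$ (since $0 \in S_1$ but $f' \in \Hc(S_1) = (\cone(S_1) \setminus S_1) \cap \NN^d$) and $f' \in \NN^d$ forces $k \geq 1$; but then $f' = kd \in S_1$ (as $d \in S_1$), contradicting $f' \notin S_1$. Hence no such decomposition of $\F(S_2)_{\prec} - g$ exists, giving the required contradiction.
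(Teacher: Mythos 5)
Your proof is correct, and although it shares the paper's overall skeleton — first identify $\F(S)_{\prec} = \F(S_1)_{\prec} + \F(S_2)_{\prec} + d$, then contradict the assumed almost symmetry of $S$ by examining $\F(S)_{\prec} - h$ for a suitable $h \in \PF_{\prec}'(S)$ — both halves are executed by genuinely different means. For the identification of $\F(S)_{\prec}$, the paper argues by contradiction using the almost symmetry of $S_1$; you instead squeeze it between two inequalities ($\F(S)_{\prec} \in \PF(S)$ is $\prec$-dominated by $\F(S_1)_{\prec} + \F(S_2)_{\prec} + d$, which in turn lies in $\Hc(S)$ and so is dominated by $\max_{\prec}\Hc(S)$), which is cleaner and uses only the existence of the two Frobenius elements. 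For the final contradiction, the paper takes $h = f + g + d$ with both $f$ and $g$ non-Frobenius and uses the almost symmetry of \emph{both} $S_1$ and $S_2$ to recognize $(\F(S_1)_{\prec}-f)+(\F(S_2)_{\prec}-g)+d$ as an element of $\PF(S)$ via Theorem \ref{PFGluing}, hence outside $S$, while the almost symmetry of $S$ simultaneously forces it into $S$. You instead take $h = \F(S_1)_{\prec} + g + d$, so that $\F(S)_{\prec} - h = \F(S_2)_{\prec} - g$ has ``lost'' the summand $d$, and you rule out any decomposition $f'+g'+d$ of it by pushing $f'$ into $G(S_1) \cap G(S_2) = d\ZZ$ and deriving $f' \in S_1$ — the same lattice mechanism that yields uniqueness of decomposition in the proof of Theorem \ref{PFGluing}. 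Your route buys a marginally stronger statement (the almost symmetry of $S_1$ is never used beyond the existence of $\F(S_1)_{\prec}$, and it suffices that one of $\PF_{\prec}'(S_1)$, $\PF_{\prec}'(S_2)$ be nonempty), at the cost of an extra group-theoretic computation; the paper's route stays entirely within the almost-symmetry formalism.
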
		

\begin{proof}
	Let $S_1$ and $S_2$ be $\prec$-almost symmetric semigroups with a fixed order $\prec$ and $\beta$-$\t(S_1),$ $\beta$-$\t(S_2) \geq 2.$ Let $S = S_1+_d S_2$ be the gluing of $S_1$ and $S_2$ by $d \in S_1 \cap S_2.$ Suppose that $S$ is $\prec$-almost symmetric.  We claim that $\F(S)_{\substack{\\\prec}} = \F(S_1)_{\substack{\\\prec}} + \F(S_2)_{\substack{\\\prec}} + d.$ Since $S$ is $\prec$-almost symmetric, $\F(S)_{\substack{\\\prec}}$ exists and $\F(S)_{\substack{\\\prec}} \in \PF(S).$ Using Theorem \ref{PFGluing}, we can write $\F(S)_{\substack{\\\prec}} = f_1 + f_2 + d$ for some $f_1 \in \PF(S_1)$ and $f_2 \in \PF(S_2).$ We have to show that $f_1 = \F(S_1)_{\substack{\\\prec}}$ and $f_2 = \F(S_2)_{\substack{\\\prec}}.$ Suppose at least one of $f_i$'s is not the Frobenius element. Without loss of generality, let $f_1 \in \PF(S_1) \setminus \{\F(S_1)_{\substack{\\\prec}}\}$ and $f_2 = \F(S_2)_{\substack{\\\prec}}.$ As $S_1$ is $\prec$-almost symmetric, there exists $g \in \PF(S_1)$ such that $f_1 + g = \F(S_1)_{\substack{\\\prec}}.$ Since $S$ is assumed to be $\prec$-almost symmetric, for every $h_1 + h_2 + d \in \PF(S)$, we have
	\[ F(S)_{\substack{\\\prec}} - (h_1 + h_2 + d) = \F(S_1)_{\substack{\\\prec}} - g + f_2 - h_1 - h_2 \in \PF(S). \]
	This implies that $\F(S_1)_{\substack{\\\prec}} - g + f_2 - h_1 - h_2 + d \in S$ and hence $\F(S_1)_{\substack{\\\prec}} - g + f_2 + d \in S,$ giving a contradiction as $\F(S_1)_{\substack{\\\prec}} - g + f_2 + d \in \PF(S).$ Therefore, the claim holds.
	
	Now, as $\F(S)_{\substack{\\\prec}} - (f+g+d) \in \PF(S),$ for each $f \in \PF(S_1)$ and for each $ g \in \PF(S_2)$, we get 
	\[ (\F(S_1)_{\substack{\\\prec}} -f)+ (\F(S_2)_{\substack{\\\prec}} -g) + d \in S.\] 
	This is a contradiction because $(\F(S_1)_{\substack{\\\prec}} -f) \in \PF(S_1) ,(\F(S_2)_{\substack{\\\prec}} -g) \in \PF(S_2)$ and by Theorem {\rm \ref{PFGluing}} $(\F(S_1)_{\substack{\\\prec}} -f)+ (\F(S_2)_{\substack{\\\prec}} -g) + d \in \PF(S).$ 
\end{proof}

For numerical semigroups, the concept of symmetric and pseudo-symmetric numerical semigroups is also characterized using elements in the gap set, $\ZZ \setminus S$ (see \cite[Proposition 4.4]{numerical}). In order to attempt such characterization in the case of $\MPD$-semigroups, we assume that $\Hc(S)$ is a finite set. Let $S$ be a $\MPD$-semigroup. If $\Hc(S)$ is a non-empty finite set, then $S$ is said to be a $\Cc$-semigroup, where $\Cc$ denotes the cone of the semigroup. Note that the semigroups defined in Example \ref{symmetricexample} are $\Cc$-semigroups. There are $\MPD$-semigroups which are not $\Cc$-semigroups (for example, see Example \ref{RFBinomialexample} and Example \ref{RFrelationexample}). The concept of $\Cc$-semigroups is introduced in \cite{Wilfconjecture} and several properties of these semigroups are investigated in \cite{cSemigroup}. When $S$ is a $\Cc$-semigroup, we give a characterization of $\prec$-symmetric and $\prec$-pseudo-symmetric semigroups.

\begin{Theorem}\label{precsymmetricity}
	Let $S$ be a $\Cc$-semigroup and let $\F(S)_{\substack{\\\prec}}$ denote the Frobenius element of $S$ with respect to an order $\prec.$ 
	Then $S$ is a $\prec$-symmetric semigroup if and only if for each $g \in \cone(S) \cap \NN^d$ we have: \[ g \in S \iff \F(S)_{\substack{\\\prec}} - g \notin S. \]
\end{Theorem}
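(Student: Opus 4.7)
The plan is to use the decomposition $\cone(S) \cap \NN^d = S \sqcup \Hc(S)$ (valid because $S \subseteq \cone(S) \cap \NN^d$ by definition), combined with Corollary \ref{MaxPseudoFrob}, which is available precisely because the $\Cc$-semigroup hypothesis forces $\Hc(S)$ to be finite. The forward direction is then handled by splitting on which piece $g$ lies in, and the reverse direction follows by applying the hypothesis to an arbitrary pseudo-Frobenius element.

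For the forward implication, assume $S$ is $\prec$-symmetric, so $\PF(S) = \{\F(S)_{\prec}\}$. Take any $g \in \cone(S) \cap \NN^d$. If $g \in S$, then $\F(S)_{\prec} - g \in S$ would yield $\F(S)_{\prec} = g + (\F(S)_{\prec}-g) \in S$, contradicting $\F(S)_{\prec} \in \Hc(S)$; hence $\F(S)_{\prec} - g \notin S$. Conversely, if $g \notin S$, then $g \in \Hc(S)$, and Corollary \ref{MaxPseudoFrob}(ii) gives some $f \in \PF(S)$ with $f - g \in S$; by $\prec$-symmetry $f = \F(S)_{\prec}$, so $\F(S)_{\prec} - g \in S$. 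This establishes both directions of the biconditional $g \in S \iff \F(S)_{\prec} - g \notin S$.

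For the reverse implication, assume the symmetry condition holds, and let $f \in \PF(S)$ be arbitrary. Since $f \in \Hc(S) \subseteq \cone(S) \cap \NN^d$ with $f \notin S$, the hypothesis applied to $g = f$ forces $\F(S)_{\prec} - f \in S$. If $\F(S)_{\prec} - f \neq 0$, then the defining property of pseudo-Frobenius elements gives $\F(S)_{\prec} = f + (\F(S)_{\prec} - f) \in S$, contradicting $\F(S)_{\prec} \in \Hc(S)$. Hence $f = \F(S)_{\prec}$, proving $\PF(S) = \{\F(S)_{\prec}\}$ and thus $\prec$-symmetry.

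There is no serious obstacle here: the whole argument rests on the identification of $\PF(S)$ with $\Maximals_{\leq} \Hc(S)$ from Corollary \ref{MaxPseudoFrob}, which is precisely what the $\Cc$-semigroup hypothesis buys us. The only subtlety worth flagging is the handling of the edge case $g = \F(S)_{\prec}$ (where $\F(S)_{\prec} - g = 0 \in S$) and $g = 0$ (where $\F(S)_{\prec} - g = \F(S)_{\prec} \notin S$), both of which are consistent with the stated equivalence.
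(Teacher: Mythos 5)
Your proof is correct and follows essentially the same route as the paper's: the forward direction splits on whether $g \in S$ and invokes Corollary \ref{MaxPseudoFrob} for the case $g \in \Hc(S)$, and the reverse direction applies the hypothesis to an arbitrary $f \in \PF(S)$ and uses the pseudo-Frobenius property to force $f = \F(S)_{\substack{\\\prec}}$. Your explicit handling of the edge case $\F(S)_{\substack{\\\prec}} - f = 0$ is a minor tidying of a step the paper leaves implicit, but it is the same argument.
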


\begin{proof}
	Suppose $S$ is $\prec$-symmetric semigroup. Assume $g \in S$ and also $\F(S)_{\substack{\\\prec}} - g \in S$. This implies that $\F(S)_{\substack{\\\prec}} \in S$, which is a contradiction. Hence, $g \in S$ implies $\F(S)_{\substack{\\\prec}} - g \notin S.$ Now assume $g \in \cone(S) \cap \NN^d$ and $g \notin S.$ This implies that $g \in \Hc(S).$ By Corollary \ref{MaxPseudoFrob}, there exists $f \in \PF(S)$ such that $f -g \in S.$ Since $S$ is $\prec$-symmetric, it follows that $f = \F(S)_{\substack{\\\prec}}$ and hence $\F(S)_{\substack{\\\prec}} -g \in S.$\\
	Conversely, assume that for each $g \in \cone(S) \cap \NN^d$, $g \in S \iff \F(S)_{\substack{\\\prec}} - g \notin S.$ If $S$ is not $\prec$-symmetric, then there exists $f \in \PF(S)$ not equal to $\F(S)_{\substack{\\\prec}}.$ Since $f \notin S$ implies $\F(S)_{\substack{\\\prec}} - f \in S$ and as $f$ is pseudo-Frobenius element, we get a contradiction. Hence, $S$ is $\prec$-symmetric.
\end{proof}

\begin{Theorem}\label{precPseudosymmetricity}
	Let $S$ be a $\Cc$-semigroup and let $\F (S)_{\substack{\\\prec}}$ denote the Frobenius element of $S$ with respect to an order $\prec.$ Then $S$ is a $\prec$-pseudo-symmetric semigroup if and only if $\F(S)_{\substack{\\\prec}}$ is even, and for each $g \in \cone(S) \cap \NN^d$ we have: 
	\[ g \in S \iff \F(S)_{\substack{\\\prec}} - g \notin S \text{ and } g \neq \F(S)_{\substack{\\\prec}}/2. \]
\end{Theorem}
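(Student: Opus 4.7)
The plan is to mirror the structure of Theorem \ref{precsymmetricity}, with extra bookkeeping to handle the half-Frobenius element. The ingredients I will use are: (a) Corollary \ref{MaxPseudoFrob}, which since $S$ is a $\Cc$-semigroup identifies $\PF(S)$ with $\Maximals_{\leq}\Hc(S)$ and says every $g\in\Hc(S)$ is dominated by some pseudo-Frobenius element; (b) the fact that every Frobenius element is pseudo-Frobenius (\cite[Lemma 12]{pfelements}); and (c) the defining property of pseudo-Frobenius elements, $f+(S\setminus\{0\})\subseteq S$.

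For the forward direction, assume $S$ is $\prec$-pseudo-symmetric, so $\F(S)_{\prec}$ is even and $\PF(S)=\{\F(S)_{\prec},\F(S)_{\prec}/2\}$. If $g\in S$, then $\F(S)_{\prec}-g\in S$ would force $\F(S)_{\prec}\in S$, and $g=\F(S)_{\prec}/2$ would force $\F(S)_{\prec}=2g\in S$, so both are impossible. Conversely, take $g\in\cone(S)\cap\NN^d$ with $\F(S)_{\prec}-g\notin S$ and $g\neq\F(S)_{\prec}/2$; suppose for contradiction $g\notin S$, hence $g\in\Hc(S)$. By Corollary \ref{MaxPseudoFrob} there is $f\in\PF(S)$ with $s:=f-g\in S$. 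If $f=\F(S)_{\prec}$ we immediately contradict $\F(S)_{\prec}-g\notin S$. If $f=\F(S)_{\prec}/2$, then $s\neq 0$ (else $g=\F(S)_{\prec}/2$), and since $\F(S)_{\prec}/2\in\PF(S)$ we have $\F(S)_{\prec}/2+s\in S$; but $\F(S)_{\prec}-g=\F(S)_{\prec}/2+(\F(S)_{\prec}/2-g)=\F(S)_{\prec}/2+s$, again contradicting $\F(S)_{\prec}-g\notin S$.

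For the converse, assume the stated characterization and show $\PF(S)=\{\F(S)_{\prec},\F(S)_{\prec}/2\}$. First, $\F(S)_{\prec}/2\in\Hc(S)$: it lies in $\cone(S)\cap\NN^d$ because $\F(S)_{\prec}$ is even and $\cone(S)$ is closed under positive rational scaling, and it lies outside $S$ because otherwise $\F(S)_{\prec}=2\cdot(\F(S)_{\prec}/2)\in S$. Next, to see $\F(S)_{\prec}/2\in\PF(S)$, fix $s\in S\setminus\{0\}$ and set $g=\F(S)_{\prec}/2+s$: then $g\neq\F(S)_{\prec}/2$ and $\F(S)_{\prec}-g=\F(S)_{\prec}/2-s\notin S$ (otherwise $\F(S)_{\prec}/2=(\F(S)_{\prec}/2-s)+s\in S$), so by the assumed criterion $g\in S$, i.e.\ $\F(S)_{\prec}/2+s\in S$. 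By \cite[Lemma 12]{pfelements}, $\F(S)_{\prec}\in\PF(S)$ too, and these are distinct since $\F(S)_{\prec}\neq 0$.

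Finally, uniqueness: let $f\in\PF(S)$ with $f\neq\F(S)_{\prec}/2$. Since $f\notin S$, the contrapositive of the criterion (applied to $g=f$) forces $\F(S)_{\prec}-f\in S$; if $\F(S)_{\prec}-f\neq 0$, the pseudo-Frobenius property of $f$ yields $f+(\F(S)_{\prec}-f)=\F(S)_{\prec}\in S$, a contradiction, hence $f=\F(S)_{\prec}$. Thus $\PF(S)=\{\F(S)_{\prec},\F(S)_{\prec}/2\}$, giving $\beta\text{-}\t(S)=2$ and $\prec$-pseudo-symmetry. The one spot that needs slight care is the $f=\F(S)_{\prec}/2$ branch in the forward direction, where I must algebraically rewrite $\F(S)_{\prec}-g$ as $\F(S)_{\prec}/2+s$ to invoke the pseudo-Frobenius property; everything else is a direct adaptation of the symmetric case.
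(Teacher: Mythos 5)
Your proof is correct and follows essentially the same route as the paper's: the forward direction uses Corollary \ref{MaxPseudoFrob} and splits on whether the dominating pseudo-Frobenius element is $\F(S)_{\prec}$ or $\F(S)_{\prec}/2$, and the converse first shows $\F(S)_{\prec}/2\in\PF(S)$ via the criterion and then pins down the rest of $\PF(S)$ exactly as the paper does. The only differences are cosmetic — you make explicit a few checks (that $s\neq 0$, that $\F(S)_{\prec}/2\in\Hc(S)$, and the appeal to \cite[Lemma 12]{pfelements} for $\F(S)_{\prec}\in\PF(S)$) that the paper leaves implicit.
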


\begin{proof}
	Let $S$ be $\prec$-pseudo-symmetric. Then $\F(S)_{\substack{\\\prec}}$ is even and $\PF(S)=\{\F(S)_{\substack{\\\prec}}, \F(S)_{\substack{\\\prec}}/2 \}.$ Let $g \in S.$ Then $g \neq \F(S)_{\substack{\\\prec}}/2 $. If $\F(S)_{\substack{\\\prec}} - g \in S$ then $\F(S)_{\substack{\\\prec}} \in S,$ which is a contradiction. Hence $\F(S)_{\substack{\\\prec}} - g \notin S.$ For the converse, we will prove that if $g \in \cone(S) \cap \NN^d$ and $g \notin S$ then $\F(S)_{\substack{\\\prec}} - g \in S$ or $g = \F(S)_{\substack{\\\prec}}/2.$ If $g = \F(S)_{\substack{\\\prec}}/2$ then nothing to prove. If $g \neq \F(S)_{\substack{\\\prec}}/2$, then $g \in \Hc(S)$ and there exists $f \in \PF(S)$ such that $f- g \in S.$ If $f= \F(S)_{\substack{\\\prec}},$ we are done. If not, then $f$ has to be $\F(S)_{\substack{\\\prec}}/2$ and hence $\F(S)_{\substack{\\\prec}}/2 - g \in S $ implying that $\F(S)_{\substack{\\\prec}}/2 + \F(S)_{\substack{\\\prec}}/2 - g = \F(S)_{\substack{\\\prec}} - g \in S.$
	
	Conversely, let $\F(S)_{\substack{\\\prec}}$ be even and for each $g \in \cone(S) \cap \NN^d,$ $g \in S \iff \F(S)_{\substack{\\\prec}} - g \notin S \text{ and } g \neq \F(S)_{\substack{\\\prec}}/2.$ We first prove that $\PF_{\substack{\\\prec}}'(S) = \PF(S) \setminus \{ \F(S)_{\substack{\\\prec}} \} \neq \emptyset.$ In particular, we prove that $\F(S)_{\substack{\\\prec}}/2 \in \PF_{\substack{\\\prec}}'(S).$ Suppose not. Then there exists $s \in S \setminus \{0\}$ such that $\F(S)_{\substack{\\\prec}}/2 + s \notin S.$ This implies that either $\F(S)_{\substack{\\\prec}} - (\F(S)_{\substack{\\\prec}}/2 + s) \in S$ or $\F(S)_{\substack{\\\prec}}/2 + s = \F(S)_{\substack{\\\prec}}/2.$ Since $s \neq 0$, the only possibility is that $\F(S)_{\substack{\\\prec}}/2 - s \in S$ which further implies that $\F(S)_{\substack{\\\prec}}/2 \in S,$ a contradiction. Therefore, $\F(S)_{\substack{\\\prec}}/2 \in \PF_{\substack{\\\prec}}'(S).$ Now, let $f \in \PF_{\substack{\\\prec}}'(S).$ Then as $f \notin S,$ it follows that either $\F(S)_{\substack{\\\prec}} - f \in S$ or $f = \F(S)_{\substack{\\\prec}}/2.$ If $\F(S)_{\substack{\\\prec}} - f \in S,$ then it implies that $\F(S)_{\substack{\\\prec}} \in S$, giving a contradiction. Therefore, $f = \F(S)_{\substack{\\\prec}}/2.$ Hence, $S$ is $\prec$-pseudo-symmetric.
\end{proof}

\begin{Theorem}
	Let $S_1$ and $S_2$ be $\prec$-symmetric semigroups with a fixed term order $\prec.$ Let $S$ be a gluing of $S_1$ and $S_2$ such that $S$ is a $\Cc$-semigroup. Then $S$ is a $\prec$-symmetric semigroup.
\end{Theorem}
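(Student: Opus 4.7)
The plan is to combine three facts that are essentially already in hand: the description of $\PF(S)$ for a gluing given by Theorem~\ref{PFGluing}, the inclusion $\F(S) \subseteq \PF(S)$ noted after the definition of the Frobenius element, and the finiteness of $\Hc(S)$ guaranteed by the $\Cc$-semigroup hypothesis.

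First, since $S_1$ and $S_2$ are $\prec$-symmetric, each is an $\MPD$-semigroup with exactly one pseudo-Frobenius element: $\PF(S_1)=\{\F(S_1)_{\prec}\}$ and $\PF(S_2)=\{\F(S_2)_{\prec}\}$. By Theorem~\ref{PFGluing}, $S$ is an $\MPD$-semigroup and
\[
\PF(S)=\{f+g+d \mid f\in \PF(S_1),\ g\in\PF(S_2)\}=\{\F(S_1)_{\prec}+\F(S_2)_{\prec}+d\},
\]
so $|\PF(S)|=1$.

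Next, because $S$ is a $\Cc$-semigroup, $\Hc(S)$ is finite (and non-empty, as $\PF(S)\neq\emptyset$), so the Frobenius element $\F(S)_{\prec}=\max_{\prec}\Hc(S)$ exists. Using \cite[Lemma~12]{pfelements} (recalled just after the definition of $\F(S)_{\prec}$), every Frobenius element lies in $\PF(S)$, so $\F(S)_{\prec}\in\PF(S)$. Since $\PF(S)$ is a singleton by the previous paragraph, we conclude $\PF(S)=\{\F(S)_{\prec}\}$, i.e.\ $S$ is $\prec$-symmetric. As a by-product, $\F(S)_{\prec}=\F(S_1)_{\prec}+\F(S_2)_{\prec}+d$.

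This argument is essentially formal once Theorem~\ref{PFGluing} is in place; there is no real obstacle. The only subtlety worth mentioning is the role of the $\Cc$-semigroup hypothesis: without the finiteness of $\Hc(S)$ one cannot assert the existence of $\F(S)_{\prec}$, and $\prec$-symmetry could fail even though $|\PF(S)|=1$ (compare Example~\ref{FrobNotexist}). The $\Cc$-semigroup assumption is exactly what bridges $|\PF(S)|=1$ to the condition $\PF(S)=\{\F(S)_{\prec}\}$ required by the definition.
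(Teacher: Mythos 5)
Your proposal is correct and follows essentially the same path as the paper: both use Theorem~\ref{PFGluing} to conclude that $\PF(S)$ is the singleton $\{\F(S_1)_{\prec}+\F(S_2)_{\prec}+d\}$, and then identify this unique pseudo-Frobenius element with $\F(S)_{\prec}$. The only difference is in that last step, where the paper runs a short direct maximality argument (assuming some $g\in\Hc(S)$ with $\F(S_1)_{\prec}+\F(S_2)_{\prec}+d\prec g$ and deriving a contradiction via Corollary~\ref{MaxPseudoFrob} and the compatibility of $\prec$ with addition), whereas you invoke the already-quoted inclusion $\F(S)\subseteq\PF(S)$ together with the existence of $\F(S)_{\prec}$ from finiteness of $\Hc(S)$ --- a legitimate shortcut that the paper itself endorses in its remark that $\beta$-$\t(S)=1$ plus finiteness of $\Hc(S)$ forces $\prec$-symmetry.
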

\begin{proof}
	Let $S = S_1 +_d S_2$ and let $\F(S_1)_{\substack{\\\prec}}$ and $\F(S_2)_{\substack{\\\prec}}$ be the Frobenius elements of $S_1$ and $S_2$ respectively. Using Theorem \ref{PFGluing}, we get $\F(S_1)_{\substack{\\\prec}} + \F(S_2)_{\substack{\\\prec}} + d \in \PF(S).$ Since $\beta$-$\t(S)= \beta$-$\t(S_1) \cdot \beta$-$\t(S_2)$ (see \cite[Theorem 6.1]{hema2019}), we have $|\PF(S)| = 1.$ Therefore, it is enough to show that $\F(S_1)_{\substack{\\\prec}} + \F(S_2)_{\substack{\\\prec}} + d = \text{max}_{\prec} \Hc(S).$ Suppose that there exists $g \in \Hc(S)$ such that $\F(S_1)_{\substack{\\\prec}} + \F(S_2)_{\substack{\\\prec}} + d \prec g.$ Since $\F(S_1)_{\substack{\\\prec}} + \F(S_2)_{\substack{\\\prec}} + d \in \PF(S)$ and as $|\PF(S)|=1$, using Corollary \ref{MaxPseudoFrob} we get $g \leq \F(S_1)_{\substack{\\\prec}} + \F(S_2)_{\substack{\\\prec}} + d.$ Therefore, $\F(S_1)_{\substack{\\\prec}} + \F(S_2)_{\substack{\\\prec}} + d - g \in S.$ Write $\F(S_1)_{\substack{\\\prec}} + \F(S_2)_{\substack{\\\prec}} + d - g = s$ for some $s \in S$. Since $\F(S_1)_{\substack{\\\prec}} + \F(S_2)_{\substack{\\\prec}} + d \prec g$, we get $g+s \prec g$. If $s \neq 0$, then $0 \prec s$ implies that $g \prec s+g$. Therefore, $\F(S_1)_{\substack{\\\prec}} + \F(S_2)_{\substack{\\\prec}} + d = g$ and hence $\F(S_1)_{\substack{\\\prec}} + \F(S_2)_{\substack{\\\prec}} + d = \text{max}_{\prec} \Hc(S).$
\end{proof}

The following result gives a different characterization of $\prec$-symmetric and $\prec$-pseudo-symmetric $\Cc$-semigroups when $\cone(S) \cap \NN^d = \NN^d.$ On $\cone(S),$ we define a relation $\leq_c$ as follows: $g \leq_c f$ if $g_i \leq f_i$ for all $i \in [1,d].$

\begin{Theorem}\label{precsymmetricCardH(S)}
	Let $S$ be a $\Cc$-semigroup such that $\cone(S)  \cap \NN^d = \mathbb{N}^d.$ Then
	\begin{enumerate}[{\rm (1)}]
		\item $S$ is $\prec$-symmetric if and only if $| \Hc(S)| = | \{ g \in S \mid g \leq_c \F(S)_{\substack{\\\prec}}\}|.$
		
		\item $S$ is $\prec$-pseudo-symmetric if and only if $| \Hc(S) \setminus \{ \F(S)_{\substack{\\\prec}}/2 \}| = | \{ g \in S \mid g \leq_c \F(S)_{\substack{\\\prec}}\}|$ and $\F(S)_{\substack{\\\prec}}$ is even. 
	\end{enumerate}  
\end{Theorem}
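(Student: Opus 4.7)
The plan is to set $F := \F(S)_{\substack{\\\prec}}$, consider the coordinate box $A := \{g \in \NN^d \mid g \leq_c F\}$, and exploit the involution $\phi \colon A \to A$, $g \mapsto F - g$. Since $F \in \Hc(S) \subseteq \NN^d$, the map $\phi$ is well-defined and self-inverse. Two observations will be used throughout: (i) the hypothesis $\cone(S) \cap \NN^d = \NN^d$ gives $\Hc(S) = \NN^d \setminus S$ and in particular $A \setminus S \subseteq \Hc(S)$; (ii) $\phi$ always sends $A \cap S$ into $A \setminus S$, since otherwise $F = g + (F - g) \in S$ would contradict $F \in \Hc(S)$.

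For part (1), direction $(\Rightarrow)$: when $S$ is $\prec$-symmetric, Theorem \ref{precsymmetricity} yields $F - g \in S$ for every $g \in \Hc(S)$, which forces $g \leq_c F$, hence $\Hc(S) \subseteq A$ and thus $\Hc(S) = A \setminus S$. The same theorem upgrades the inclusion in (ii) to a bijection $\phi \colon A \cap S \to A \setminus S$, giving $|\Hc(S)| = |A \cap S| = |\{g \in S \mid g \leq_c F\}|$. Direction $(\Leftarrow)$: combining (i) and (ii) yields the chain $|A \cap S| \leq |A \setminus S| \leq |\Hc(S)|$, which the hypothesis forces to be equalities. This simultaneously gives a bijection $\phi \colon A \cap S \leftrightarrow A \setminus S$ and the identity $A \setminus S = \Hc(S)$. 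The biconditional of Theorem \ref{precsymmetricity} then follows: for $g \in A$ it is the bijection, and for $g \in \NN^d \setminus A$ one has $g \in S$ (since $\Hc(S) \subseteq A$) while $F - g \notin \NN^d \supseteq S$, so both sides are true.

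Part (2) runs along the same lines with Theorem \ref{precPseudosymmetricity} replacing Theorem \ref{precsymmetricity}, the new ingredient being that $F/2$ is the unique fixed point of $\phi$. In either direction, $F/2 \notin S$ (else $F = 2 \cdot (F/2) \in S$), so $F/2 \in A \setminus S$; moreover $\phi(A \cap S)$ avoids $F/2$, refining (ii) to $\phi(A \cap S) \subseteq (A \setminus S) \setminus \{F/2\}$. For $(\Rightarrow)$, Theorem \ref{precPseudosymmetricity} upgrades this inclusion to a bijection and, as before, gives $\Hc(S) = A \setminus S$, yielding $|\{g \in S \mid g \leq_c F\}| = |\Hc(S) \setminus \{F/2\}|$. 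For $(\Leftarrow)$, the cardinality chain $|A \cap S| \leq |(A \setminus S) \setminus \{F/2\}| \leq |\Hc(S) \setminus \{F/2\}|$ collapses to equalities, and the biconditional of Theorem \ref{precPseudosymmetricity} is verified case by case on $g = F/2$, $g \in A \setminus \{F/2\}$, and $g \in \NN^d \setminus A$. The principal obstacle in both parts is the reverse direction: extracting $\Hc(S) \subseteq A$ from a purely numerical hypothesis is what enables the promotion of the biconditional from the box $A$ to all of $\NN^d$, and this containment is precisely what the forced equality at the right end of the cardinality chain delivers.
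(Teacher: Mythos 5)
Your proposal is correct and follows essentially the same route as the paper's proof: both hinge on the involution $g \mapsto \F(S)_{\substack{\\\prec}} - g$, the characterizations in Theorems \ref{precsymmetricity} and \ref{precPseudosymmetricity}, and the fact that an injection between finite sets of equal cardinality is a bijection. Your repackaging via the box $A$ and the two-step cardinality chain is only an organizational variant of the paper's direct maps $\phi$ and $\psi$ between $\Hc(S)$ and $\{g \in S \mid g \leq_c \F(S)_{\substack{\\\prec}}\}$, with the added merit of spelling out part (2), which the paper leaves to the reader.
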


\begin{proof}
    (1) Let $S$ be $\prec$-symmetric. We define a map
    \begin{align*}
    \phi : \Hc(S) \longrightarrow \{g \in S \mid g \leq_c \F(S)_{\substack{\\\prec}} \} \text{ by } \phi(g) =  \F(S)_{\substack{\\\prec}} - g.
    \end{align*}

	Let $g \in \Hc(S).$ Since $S$ is $\prec$-symmetric and $g \notin S,$ by Theorem \ref{precsymmetricity} we get $\F(S)_{\substack{\\\prec}} - g \in S.$ Also as $g \in \NN^d$, we have $\F(S)_{\substack{\\\prec}} - g \leq_c \F(S)_{\substack{\\\prec}}.$ Thus $\phi$ is a well defined map and is clearly injective. Now let $g \in S$ such that $g \leq_c \F(S)_{\substack{\\\prec}}.$ Then from Theorem \ref{precsymmetricity}, we get $\F(S)_{\substack{\\\prec}} - g \notin S$ and  $\F(S)_{\substack{\\\prec}} - g \in \NN^d = \cone(S) \cap \NN^d.$ Thus $\F(S)_{\substack{\\\prec}} - g \in \Hc(S).$ Therefore, $\phi$ is a bijective map.
	
	Conversely, let $| \Hc(S)| = | \{ g \in S \mid g \leq_c \F(S)_{\substack{\\\prec}}\}|.$ Consider the map 
	\begin{align*}
		\psi : \{ g \in S \mid g \leq_c \F(S)_{\substack{\\\prec}}\} \longrightarrow \Hc(S) \text{ such that } \psi(g) = \F(S)_{\substack{\\\prec}} - g.
	\end{align*}

	Then $\psi$ is a well defined map as for any $g \in S$ such that $g \leq_c \F(S)_{\substack{\\\prec}},$ we have $\F(S)_{\substack{\\\prec}} - g \notin S$ and $\F(S)_{\substack{\\\prec}} - g \in \NN^d = \cone(S) \cap \NN^d.$ Thus $\F(S)_{\substack{\\\prec}} - g \in \Hc(S).$ Clearly $\psi$ is an injective map. Since  $| \Hc(S)| = | \{ g \in S \mid g \leq_c \F(S)_{\substack{\\\prec}}\}|,$ we get $\psi$ is a bijective map. Observe that $\psi^{-1} : \Hc(S) \longrightarrow \{ g \in S \mid g \leq_c \F(S)_{\substack{\\\prec}}\},$ defined by $\psi^{-1}(g) = \F(S)_{\substack{\\\prec}} - g$ for all $g \in \Hc(S).$ Now, in order to prove that $S$ is $\prec$-symmetric, we use Theorem \ref{precsymmetricity}. Let $g \in \cone(S) \setminus S.$ Then $g \in \Hc(S)$ implies that $\psi^{-1}(g) = \F(S)_{\substack{\\\prec}} - g \in S.$ Now for $g \in \cone(S),$ let $\F(S)_{\substack{\\\prec}} - g \in S.$ Since $\F(S)_{\substack{\\\prec}} - g \in \NN^d$ and $g \in \NN^d,$ we get $\F(S)_{\substack{\\\prec}} - g \leq_c \F(S)_{\substack{\\\prec}}.$ Therefore $\psi(\F(S)_{\substack{\\\prec}} - g) = g \in \Hc(S).$ This implies $g \notin S$ and hence $S$ is $\prec$-symmetric.

    (2) The proof follows using the same arguments as in part (1) and using Theorem \ref{precPseudosymmetricity}. 
\end{proof}

\begin{Definition}[{\cite[Definition 1]{Wilfconjecture}}]
	Let $S$ be a $\Cc$-semigroup. Define the Frobenius number of $S$ as $\mathcal{N}(\F(S)_{\substack{\\\prec}}) = | \Hc(S)| + | \{ g \in S \mid g \prec \F(S)_{\substack{\\\prec}}\}|.$
\end{Definition}

Observe that if $S$ is a numerical semigroup, then $\mathcal{N}(\F(S)_{\substack{\\\prec}}) = \F(S),$ the Frobenius number of $S.$ For a numerical semigroup $S,$ Wilf proposed a conjecture related to the Diophantine Frobenius Problem that claims that the inequality 
\[ \F(S)+1 \leq e(S) \cdot |\{ s \in S \mid s < \F(S) \}| \] 
is true. While this conjecture still remains open, a potential extension of Wilf's conjecture to affine semigroups is studied in \cite{Wilfconjecture}.

\begin{Conjecture} [{\bf Extension of Wilf's conjecture} {\cite[Conjecture 14]{Wilfconjecture}}]
	Let $S$ be a $\Cc$-semigroup. The extended Wilf's conjecture is
	\[ | \{ g \in S \mid g \prec \F(S)_{\substack{\\\prec}} \} | \cdot e(S) \geq \mathcal{N}(\F(S)_{\substack{\\\prec}}) + 1, \]
	where $e(S)$ denotes the embedding dimension of $S.$
\end{Conjecture}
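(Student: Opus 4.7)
The Conjecture is open in full generality, so the realistic target is to prove it under the restrictive hypotheses that the preceding machinery addresses, namely that $S$ is a $\Cc$-semigroup with $\cone(S) \cap \NN^d = \NN^d$ and $S$ is either $\prec$-symmetric or $\prec$-pseudo-symmetric. The strategy is to use Theorem \ref{precsymmetricCardH(S)} to translate $|\Hc(S)|$ into a count of $S$-elements coordinate-bounded by $\F(S)_\prec$, and then compare that count with $L(S) := \{g \in S \mid g \prec \F(S)_\prec\}$ using compatibility of $\prec$ with the coordinate order.

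First I would establish the inclusion $\{g \in S \mid g \leq_c \F(S)_\prec\} \subseteq L(S)$. If $g$ lies in the left-hand set, then $g \neq \F(S)_\prec$ (because $\F(S)_\prec \in \Hc(S)$ while $g \in S$), so $v := \F(S)_\prec - g$ is a nonzero element of $\NN^d$. Every term order on $\NN^d$ satisfies $0 \prec v$ for nonzero $v \in \NN^d$, and compatibility with addition then gives $g \prec g + v = \F(S)_\prec$, putting $g$ in $L(S)$. Combined with Theorem \ref{precsymmetricCardH(S)}, this yields $|\Hc(S)| \leq |L(S)|$ in the $\prec$-symmetric case and $|\Hc(S)| \leq |L(S)| + 1$ in the $\prec$-pseudo-symmetric case.

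Substituting these bounds into the conjectured inequality $e(S) \cdot |L(S)| \geq |\Hc(S)| + |L(S)| + 1$ reduces it, respectively, to
\[
(e(S) - 2) \cdot |L(S)| \geq 1 \quad \text{and} \quad (e(S) - 2) \cdot |L(S)| \geq 2.
\]
Since $0 \in L(S)$, one always has $|L(S)| \geq 1$. The key structural input is then a lower bound on $e(S)$: the $\MPD$ hypothesis together with $\cone(S) \cap \NN^d = \NN^d$ and $d \geq 2$ forces $e(S) \geq d+1 \geq 3$, because $e(S) = d$ would place the generators on the $d$ extremal rays of $\QQ_{\geq 0}^d$, making them $\QQ$-linearly independent and $k[S]$ a polynomial ring of depth $d \geq 2$, contradicting $\depth_R k[S] = 1$. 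This closes the $\prec$-symmetric case at once.

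The main obstacle will be the $\prec$-pseudo-symmetric corner $d = 2$, $e(S) = 3$, where one additionally needs $|L(S)| \geq 2$. My plan is to argue that $|\Hc(S)| = 2$ is impossible in this range: equality would force $\Hc(S) = \PF(S) = \{\F(S)_\prec/2, \F(S)_\prec\}$ and, together with $\cone(S) \cap \NN^d = \NN^d$, would force $S = \NN^d \setminus \PF(S)$; a direct closure check on $\NN^d$ rules this out unless $\F(S)_\prec$ is a coordinate multiple of some $e_i$, a configuration that in turn fails to be $\prec$-pseudo-symmetric in the affine (non-numerical) setting. Thus $|\Hc(S)| \geq 3$, which by the first step gives $|M(S)| \geq 2$ and hence $|L(S)| \geq 2$. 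The delicate piece is this closure/obstruction argument, which will likely require a small case analysis depending on whether $\F(S)_\prec$ lies on an extremal ray; if it resists, a fallback is to strengthen the hypothesis to $e(S) \geq 4$ in the pseudo-symmetric statement and let the cleaner inequality $(e(S)-2)|L(S)| \geq 2$ close with $|L(S)| \geq 1$.
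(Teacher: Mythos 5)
The displayed statement is the conjecture itself, which the paper does not prove in general; the provable target, which you correctly identified, is Theorem \ref{Wilf}, where $S$ is additionally a $\Cc$-semigroup with $\cone(S)\cap\NN^d=\NN^d$, $d>1$, and $\prec$-symmetric or $\prec$-pseudo-symmetric. Up to the final step your argument coincides with the paper's: the inclusion $\{g\in S\mid g\leq_c \F(S)_{\prec}\}\subseteq\{g\in S\mid g\prec \F(S)_{\prec}\}$, the counts supplied by Theorem \ref{precsymmetricCardH(S)}, the bound $e(S)\geq 3$, and the reduction of the pseudo-symmetric case to the corner $|\{g\in S\mid g\prec \F(S)_{\prec}\}|=1$ with $e(S)=3$ all match. (Your justification of $e(S)\geq 3$ via $e(S)\geq d+1$ is legitimate here precisely because $\cone(S)\cap\NN^d=\NN^d$ forces the cone to have exactly $d$ extremal rays.)

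The gap is in how you dispose of that corner case. You correctly deduce $\Hc(S)=\{\F(S)_{\prec}/2,\ \F(S)_{\prec}\}$, hence $S=\NN^d\setminus\{\F(S)_{\prec}/2,\ \F(S)_{\prec}\}$, and that closure under addition forces $\F(S)_{\prec}=2e_i$ for some $i$. But your proposed obstruction --- that $S=\NN^d\setminus\{e_i,2e_i\}$ fails to be $\prec$-pseudo-symmetric in the affine setting --- is false. For $d=2$ and $i=1$, the semigroup $S=\NN^2\setminus\{(1,0),(2,0)\}$ is a $\Cc$-semigroup with $\Hc(S)=\PF(S)=\{(1,0),(2,0)\}$ and $\F(S)_{\prec}=(2,0)$ for every term order, so it \emph{is} $\prec$-pseudo-symmetric. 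The correct way to kill the corner case, and what the paper does, is to compute the minimal generating set of $\NN^d\setminus\{e_i,2e_i\}$, namely $\{3e_i,4e_i,5e_i\}$ together with $\{e_j\}_{j\neq i}$ and $\{e_i+e_j,\ 2e_i+e_j\}_{j\neq i}$, which has cardinality $3d\geq 6$; this contradicts $e(S)=3$, so the corner case simply cannot occur. Your fallback of strengthening the hypothesis to $e(S)\geq 4$ would sidestep the issue but proves a strictly weaker result than Theorem \ref{Wilf}.
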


Note that if $S$ is an affine semigroup of $\NN^d,$ where $d \geq 2,$ then the semigroup ring $k[S]$ is Cohen-Macaulay when $e(S)=2.$ Since our affine semigroups are $\MPD$, we may assume that $e(S) \geq 3.$

\begin{Theorem}  \label{Wilf}
	Let $S$ be a $\Cc$-semigroup such that $\cone(S)  \cap \NN^d = \mathbb{N}^d,$ $d >1.$ If $S$ is a $\prec$-symmetric or a $\prec$-pseudo-symmetric semigroup, then the extended Wilf's conjecture holds.
\end{Theorem}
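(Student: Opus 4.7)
The plan is to translate the extended Wilf inequality, via Theorem~\ref{precsymmetricCardH(S)}, into an elementary comparison of two counts and then finish using the hypothesis $e(S)\geq 3$. Set
\[
N := |\{g \in S \mid g \prec \F(S)_{\prec}\}|, \qquad M := |\{g \in S \mid g \leq_c \F(S)_{\prec}\}|.
\]
By Theorem~\ref{precsymmetricCardH(S)}, one has $|\Hc(S)|=M$ when $S$ is $\prec$-symmetric and $|\Hc(S)|=M+1$ when $S$ is $\prec$-pseudo-symmetric. Substituting into $\mathcal{N}(\F(S)_{\prec})=|\Hc(S)|+N$, the target inequality $N\cdot e(S)\geq \mathcal{N}(\F(S)_{\prec})+1$ is equivalent to $N(e(S)-1)\geq M+1$ in the symmetric case, and to $N(e(S)-1)\geq M+2$ in the pseudo-symmetric case.

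Next I would establish the elementary bounds $1\leq M\leq N$. The inequality $M\geq 1$ is immediate because $0\in S$ and $0\leq_c\F(S)_{\prec}$ (using $\F(S)_{\prec}\in\cone(S)\cap\NN^d=\NN^d$). For $M\leq N$, if $g\in S$ satisfies $g\leq_c\F(S)_{\prec}$ then $g\neq\F(S)_{\prec}$ (because $\F(S)_{\prec}\notin S$), so $\F(S)_{\prec}-g$ is a nonzero element of $\NN^d$; since every term order on $\NN^d$ has $0$ as its minimum, $0\prec\F(S)_{\prec}-g$, and compatibility with addition then yields $g\prec\F(S)_{\prec}$. Using $e(S)\geq 3$ now closes the $\prec$-symmetric case at once via $N(e(S)-1)\geq 2N\geq 2M\geq M+1$, and closes the $\prec$-pseudo-symmetric case whenever $e(S)\geq 4$ via $N(e(S)-1)\geq 3N\geq 3M\geq M+2$, in both instances using $M\geq 1$.

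The main obstacle is the remaining boundary case, $\prec$-pseudo-symmetric with $e(S)=3$, where $2N\geq M+2$ cannot be deduced from $M\geq 1$ alone. Here I would exploit the arithmetic of $\F(S)_{\prec}/2\in\PF(S)$, which forces $\F(S)_{\prec}/2+a_i\in S$ for every minimal generator $a_i$. If some generator satisfies $a_i\leq_c\F(S)_{\prec}/2$, then $\F(S)_{\prec}/2+a_i\in S$ and $\leq_c\F(S)_{\prec}$, producing a nonzero element counted by $M$, so $M\geq 2$ and $2N\geq 2M\geq M+2$. Otherwise every generator has some coordinate strictly exceeding the corresponding coordinate of $\F(S)_{\prec}/2$; combining this with $\cone(S)\cap\NN^d=\NN^d$ together with $d\geq 2$ (so each of the $d$ extremal rays of $\NN^d$ must carry a generator), a short coordinatewise case analysis should either exhibit a second element of $S$ lying $\prec\F(S)_{\prec}$, giving $N\geq 2$ and hence $2N\geq 4\geq M+2$ (noting $M=1$ in this subcase), or else contradict the $\MPD$ hypothesis on $S$. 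This last somewhat ad hoc step is where the real care is required; everything else is mechanical once the counts $N$ and $M$ have been identified via Theorem~\ref{precsymmetricCardH(S)}.
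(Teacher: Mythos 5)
Your reduction is correct and is essentially the paper's: the containment $\{g \in S \mid g \leq_c \F(S)_{\prec}\} \subseteq \{g \in S \mid g \prec \F(S)_{\prec}\}$ together with Theorem \ref{precsymmetricCardH(S)} and $M \geq 1$ disposes of the $\prec$-symmetric case and of the $\prec$-pseudo-symmetric case whenever $e(S) \geq 4$ or $N \geq 2$ (note $2N \geq N+2 \geq M+2$ already when $N \geq 2$, so your generator dichotomy is only ever needed when $N=1$). The genuine gap is the boundary case $\prec$-pseudo-symmetric, $e(S)=3$, $N=1$: there your argument trails off into ``a short coordinatewise case analysis should either exhibit a second element of $S$ lying $\prec \F(S)_{\prec}$ \ldots or else contradict the $\MPD$ hypothesis.'' That dichotomy is exactly the claim that needs proof, and nothing in the proposal establishes it; in particular it is not explained how the assumption that every generator has a coordinate exceeding the corresponding coordinate of $\F(S)_{\prec}/2$ produces either conclusion.

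The paper closes this case by pinning $S$ down completely rather than reasoning about generators. If $N=1$ then $M=1$, and Theorem \ref{precsymmetricCardH(S)}(2) gives $|\Hc(S) \setminus \{\F(S)_{\prec}/2\}| = 1$; since $\F(S)_{\prec} \in \Hc(S)$ this forces $\Hc(S) = \{\F(S)_{\prec}/2, \F(S)_{\prec}\}$, i.e.\ $S = \NN^d \setminus \{\F(S)_{\prec}/2, \F(S)_{\prec}\}$. Closure of $S$ under addition then forces $\F(S)_{\prec}/2$ to be indecomposable in $\NN^d$, so $\F(S)_{\prec} = 2e_i$ and $S = \NN^d \setminus \{e_i, 2e_i\}$, whose minimal generating set is $\{3e_i,4e_i,5e_i\} \cup \{e_j\}_{j \neq i} \cup \{e_i+e_j, 2e_i+e_j\}_{j \neq i}$ and so has at least $6$ elements when $d \geq 2$, contradicting $e(S)=3$. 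You could graft this onto your write-up directly (it makes your generator dichotomy unnecessary), but as it stands the decisive subcase of your proof is asserted, not proved.
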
 

\begin{proof}
Observe that
\begin{center}
$ \{ g \in S \mid g \leq_c \F(S)_{\substack{\\\prec}} \}  \subseteq \{ g \in S \mid g \prec \F(S)_{\substack{\\\prec}} \},$ for any term order $\prec.$
\end{center} 
	If $S$ is $\prec$-symmetric, then from Theorem \ref{precsymmetricCardH(S)} we see that
	\begin{align*} 
	\mathcal{N}(\F(S)_{\substack{\\\prec}}) 
	= | \Hc(S)| + | \{ g \in S \mid g \prec \F(S)_{\substack{\\\prec}}\}| 
	& \leq 2~| \{ g \in S \mid g \prec \F(S)_{\substack{\\\prec}}\}| \\
	&< e(S) \cdot | \{ g \in S \mid g \prec \F(S)_{\substack{\\\prec}}\}|.
	\end{align*}
	Hence, $\mathcal{N}(\F(S)_{\substack{\\\prec}}) + 1 \leq e(S) \cdot | \{ g \in S \mid g \prec \F(S)_{\substack{\\\prec}}\}|.$ 
	
	Now, let $S$ be $\prec$-pseudo-symmetric. Using Theorem \ref{precsymmetricCardH(S)} we get 
	\begin{align*}
		\mathcal{N}(\F(S)_{\substack{\\\prec}}) + 1
		= | \Hc(S)| + | \{ g \in S \mid g \prec \F(S)_{\substack{\\\prec}}\}| + 1
		\leq 2~| \{ g \in S \mid g \prec \F(S)_{\substack{\\\prec}} \} | + 2.
	\end{align*}
	If $| \{ g \in S \mid g \prec \F(S)_{\substack{\\\prec}} \} | \geq 2,$ then $2~| \{ g \in S \mid g \prec \F(S)_{\substack{\\\prec}} \} | + 2 \leq e(S) \cdot | \{ g \in S \mid g \prec \F(S)_{\substack{\\\prec}}\}|$ and thus $S$ satisfies the extended Wilf's conjecture. The conjecture is also satisfied when $| \{ g \in S \mid g \prec \F(S)_{\substack{\\\prec}}\}| = 1$ and $e(S) \geq 4.$ We now prove that the case $| \{ g \in S \mid g \prec \F(S)_{\substack{\\\prec}}\}| = 1$ and $e(S)=3$ is not possible, which will complete the proof. \\
	Suppose $| \{ g \in S \mid g \prec \F(S)_{\substack{\\\prec}}\}| = 1.$ Since ${\bf 0} = (0,0,\ldots,0) \prec \F(S)_{\substack{\\\prec}},$ using Theorem \ref{precsymmetricCardH(S)} we get $\Hc(S) = \{ \F(S)_{\substack{\\\prec}}/2, \F(S)_{\substack{\\\prec}} \}.$ In particular, $S = \NN^d \setminus \{ \F(S)_{\substack{\\\prec}}/2, \F(S)_{\substack{\\\prec}} \}.$ As $S$ is a pseudo-symmetric semigroup, the only possible choice for the Frobenius number in this setup is $\F(S)_{\substack{\\\prec}} = 2e_i,$ for some $i=1,\ldots,d,$ where $\{ e_1,\ldots,e_d \}$ is a standard $\QQ$-basis of vector space $\QQ^d.$ Thus, $S = \NN^d \setminus \{ e_i, 2e_i \}$ for some $i \in \{1,\ldots,d\}$ and hence is minimally generated by 
	\[ \{ 3e_i, 4e_i, 5e_i\} \bigcup \left( \bigcup_{\substack{j=1\\ j \neq i}}^{d} \{e_j\} \right) \bigcup \left( \bigcup_{\substack{j=1\\ j \neq i}}^{d} \{ e_i+e_j, 2e_i + e_j \} \right). \]
	Since $d \geq 2,$ it follows that the embedding dimension in this case is at least $6.$ Thus, $e(S)$ cannot be 3 whenever $| \{ g \in S \mid g \prec \F(S)_{\substack{\\\prec}}\}| = 1.$
\end{proof}

\section{Hilbert Series and Pseudo-Frobenius Elements}

In the case of numerical semigroups, finding the Frobenius number is a classical problem, known as the Diophantine Frobenius Problem. A number of methods, from several areas of mathematics, have been used to find a formula giving the Frobenius number. In one such attempt in \cite{frobenius}, the authors equate the Frobenius number $\F(S)$ of a numerical semigroup $S$ with the $a$-invariant of the semigroup ring $k[S].$

Recall that the Hilbert series of the $S$-graded ring $k[S]$ is 
\[ \H(k[S]; {\bf t}) = \sum_{s \in S} \dim_k (k[S]_s) ~{\bf t}^a, \]
where $k[S]_s$ is the $k$-vector space generated by all monomials of $S$-degree $s$ and ${\bf t} = (t_1,\ldots,t_d)$ is a set of indeterminates. By Hilbert-Serre theorem, we know that the Hilbert series of $k[S]$ can be expressed as a rational function
\[	\H(k[S]; {\bf t}) = \frac{\mathcal{K}(k[S]; {\bf t})}{\prod_{i=1}^n(1-{\bf t}^{a_i})}. \]
The numerator $\mathcal{K}(k[S]; {\bf t}) \in \ZZ[{\bf t}]$ is known as the $K$-polynomial of $k[S]$ and when $S$ is a numerical semigroup, the degree of $\H(k[S]; {\bf t}),$ as a rational function, is called the $a$-invariant of $k[S]$ and it is denoted by $a(k[S]).$ In this section, we prove an analogue of this connection for a $\Cc$-semigroup $S$ with $\cone(S) \cap \NN^d = \NN^d$ and derive the result \cite[Theorem 3.1]{frobenius} as a consequence.

\begin{Theorem}
	Let $S=\langle a_1, \ldots, a_n \rangle \subset \mathbb{N}^d$ be a $\MPD$-semigroup. Then 
	\begin{enumerate}[{\rm(1)}]
	\item $\displaystyle \mathcal{K}(k[S];{\bf t}) 
		= \sum_{a \in S} \sum_{i=0}^{n-2}(-1)^i \beta_{i,a}(k[S]){\bf t}^a + \sum_{a\in S} (-1)^{n-1}\beta_{n-1,a}(k[S]){\bf t}^a,$ \\[1mm]
where $\beta_{i,a}$ is the $i^{th}$-multigraded Betti number of $k[S]$ in degree $a$, and

	\item $\PF(S) = \{ a - \sum_{i=1}^n a_i \mid \beta_{n-1,a} \neq 0\}.$
	\end{enumerate}
	
	In particular, the pseudo-Frobenius elements of $S$ can be obtained from the Hilbert series of the semigroup ring $k[S]$.
\end{Theorem}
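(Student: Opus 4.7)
The plan is to read the $K$-polynomial off a minimal multigraded free resolution of $k[S]$ and then invoke the result of \cite[Theorem 6]{pfelements} recalled in the excerpt, which characterizes the top-index Betti degrees in terms of $\PF(S)$. By the $\MPD$ hypothesis, $\pdim_R k[S] = n-1$, so $k[S]$ admits a minimal multigraded free resolution
\[ 0 \lra F_{n-1} \lra F_{n-2} \lra \cdots \lra F_1 \lra F_0 \lra k[S] \lra 0, \qquad F_i = \bigoplus_{a \in S} R(-a)^{\beta_{i,a}(k[S])}, \]
and all generator multi-degrees lie in $S$ because $k[S]$ is $S$-graded and the differentials are degree-preserving.

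Next I would apply additivity of the Hilbert series on exact sequences together with $\H(R(-a);{\bf t}) = {\bf t}^a/\prod_{j=1}^{n}(1-{\bf t}^{a_j})$ to obtain
\[ \H(k[S];{\bf t}) \;=\; \sum_{i=0}^{n-1}(-1)^i\,\H(F_i;{\bf t}) \;=\; \frac{\displaystyle\sum_{i=0}^{n-1}(-1)^i\sum_{a\in S}\beta_{i,a}(k[S])\,{\bf t}^a}{\displaystyle\prod_{j=1}^{n}(1-{\bf t}^{a_j})}. \]
Matching the numerator with $\mathcal{K}(k[S];{\bf t})$ from the Hilbert-Serre presentation and splitting off the top homological index $i = n-1$ gives formula (1) immediately.

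For (2), I would invoke the cited result directly: the $S$-degrees of a minimal generating set of the top free module $F_{n-1}$ are precisely $\{a + \sum_{i=1}^{n} a_i : a \in \PF(S)\}$. Hence $\beta_{n-1,b}(k[S]) \neq 0$ if and only if $b - \sum_{i=1}^{n} a_i \in \PF(S)$, which is (2). The ``in particular'' assertion then follows quickly: multiplying the Hilbert series by the explicitly known denominator $\prod_{j=1}^{n}(1-{\bf t}^{a_j})$ recovers $\mathcal{K}(k[S];{\bf t})$, and by (1) the Betti numbers $\beta_{n-1,a}$ appear in it as the coefficients of $(-1)^{n-1}{\bf t}^a$, after which (2) converts these exponents into $\PF(S)$ by the shift $a \mapsto a - \sum_{i=1}^{n} a_i$.

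I do not anticipate a serious obstacle. Part (1) is routine multigraded Hilbert-series bookkeeping and part (2) is essentially a repackaging of \cite[Theorem 6]{pfelements}. The mildest subtlety is isolating the top-index contribution in $\mathcal{K}(k[S];{\bf t})$ for the ``in particular'' clause, but this is facilitated by the $\MPD$ hypothesis: it pins the length of the resolution at exactly $n-1$, so the sign $(-1)^{n-1}$ unambiguously marks the outermost summand of (1) and the correspondence with $\PF(S)$ is straightforward from there.
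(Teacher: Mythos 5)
Your proposal is correct and takes essentially the same approach as the paper: where you derive the identity $\mathcal{K}(k[S];{\bf t})=\sum_{a,i}(-1)^i\beta_{i,a}(k[S]){\bf t}^a$ by hand from additivity of the Hilbert series along the minimal multigraded resolution, the paper simply cites \cite[Proposition 8.23]{millersturmfels} for the same formula, and both arguments then truncate at $i=n-1$ via the $\MPD$ hypothesis and obtain part (2) from the characterization of the top Betti degrees in \cite{pfelements}.
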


\begin{proof}
	Let $\H(k[S];{\bf t}) = \sum_{a \in S} \dim_k(k[S]_a){\bf t}^a$ denote the Hilbert series of $k[S],$ where ${\bf t} = (t_1,\ldots,t_d).$ Then there exists a unique polynomial $\mathcal{K}(k[S];{\bf t}) \in \mathbb{Z}[{\bf t}]$ (see \cite{millersturmfels}) such that
	\[
	\H(k[S];{\bf t}) = \frac{\mathcal{K}(k[S];{\bf t})}{\prod_{i=1}^n (1-{\bf t}^{a_i})}.
	\]
	Using \cite[Proposition 8.23]{millersturmfels}, $\mathcal{K}(k[S];{\bf t})$ can be written as: 
	\[
	\mathcal{K}(k[S];{\bf t}) = \sum_{a \in S,i \geq 0} (-1)^i \beta_{i,a}(k[S]){\bf t}^a.
	\]
	 Since $S$ is a $\MPD$-semigroup, we have $\pdim_R k[S]=n-1$ and therefore $\beta_{i,a}(k[S]) = 0$ for $i > n-1$. Hence,
	\begin{align*}
		\mathcal{K}(k[S];{\bf t}) &=  \sum_{a \in S} \sum_{i=0}^{n-1}(-1)^i \beta_{i,a}(k[S]){\bf t}^a  \\
		&= \sum_{a \in S} \sum_{i=0}^{n-2}(-1)^i \beta_{i,a}(k[S]){\bf t}^a + \sum_{a\in S} (-1)^{n-1}\beta_{n-1,a}(k[S]){\bf t}^a.
	\end{align*}
	
	Now from \cite[Corollary 7]{pfelements}, we have $\beta_{n-1,a} \neq 0$ if and only if $a \in \{f + \sum_{i=1}^n a_i \mid f \in \PF(S)\}$ and hence, $\PF(S) = \{ a - \sum_{i=1}^n a_i \mid \beta_{n-1,a} \neq 0\}$ can be obtained from $\H(k[S];{\bf t})$.
\end{proof}

For a monomial ${\bf t}^a \in k[t_1, \ldots,t_n]$, $\exp({\bf t}^a)$ denotes the exponent $a.$ Let $ f = \sum_{a} \gamma_a {\bf t}^a$ be a non-zero polynomial in $k[t_1,\ldots,t_n]$ and let $\prec$ be a term order. The leading term of $f$ with respect to $\prec$ is defined as $LT_{\prec}(f) := \gamma_{\text{multideg}(f)} {\bf t}^{\text{multideg}(f)}$, where $\text{multideg}(f) = \max_{\prec} \{ a \mid \gamma_a \neq 0 \}.$

\begin{Theorem}  \label{FrobExp}
	Let $S=\langle a_1, \ldots, a_n \rangle \subset \mathbb{N}^d$ be a $\Cc$-semigroup such that $\cone(S)  \cap \NN^d =\mathbb{N}^d.$ Then $\F(S)_{\substack{\\\prec}} = \exp (LT_{\prec} \ \mathcal{K}(k[S];{\bf t})) - \sum_{i=1}^n a_i$ for any term order $\prec.$
\end{Theorem}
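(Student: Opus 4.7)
The plan is to express the $K$-polynomial $\mathcal{K}(k[S];{\bf t})$ as a difference of two explicit polynomials whose multidegrees I can directly control, and then argue that one leading term dominates. The key manipulation uses the hypothesis $\cone(S)\cap\NN^d=\NN^d$ to write the Hilbert series very concretely in terms of the gap set. Set $T({\bf t}):=\sum_{a\in\Hc(S)}{\bf t}^a$; since $S$ is a $\Cc$-semigroup, $\Hc(S)$ is finite, so $T({\bf t})$ is a polynomial whose $LT_\prec$ is ${\bf t}^{\F(S)_{\substack{\\\prec}}}$ by definition of the Frobenius element. The partition $\NN^d=S\sqcup\Hc(S)$ gives $\H(k[S];{\bf t})=\prod_{j=1}^d(1-t_j)^{-1}-T({\bf t})$, and multiplying by $P({\bf t}):=\prod_{i=1}^n(1-{\bf t}^{a_i})$ yields
\[ \mathcal{K}(k[S];{\bf t})=R({\bf t})-T({\bf t})\,P({\bf t}),\qquad R({\bf t}):=P({\bf t})\prod_{j=1}^d(1-t_j)^{-1}. \]
Here $R({\bf t})$ is a genuine polynomial because, under the standing hypothesis, $k[t_1,\ldots,t_d]$ is a finitely generated graded $k[S]$-module (generated over $k[S]$ by $\{1\}\cup\{{\bf t}^a:a\in\Hc(S)\}$), so its Hilbert series admits the denominator $P({\bf t})$ with a polynomial numerator.

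Next I compute the two leading terms. Expanding $P({\bf t})=\sum_{I\subseteq[n]}(-1)^{|I|}{\bf t}^{\sum_{i\in I}a_i}$, a typical monomial of $T\cdot P$ is $(-1)^{|I|}{\bf t}^{a+\sum_{i\in I}a_i}$ with $a\in\Hc(S)$ and $I\subseteq[n]$. Since $a\preceq\F(S)_{\substack{\\\prec}}$ (equality only when $a=\F(S)_{\substack{\\\prec}}$) and $\sum_{i\in I}a_i\preceq\sum_{i=1}^n a_i$ (equality only when $I=[n]$), compatibility of $\prec$ with addition in $\NN^d$ gives
\[ a+\sum_{i\in I}a_i\ \preceq\ \F(S)_{\substack{\\\prec}}+\sum_{i=1}^n a_i, \]
with equality only for the unique pair $(a,I)=(\F(S)_{\substack{\\\prec}},[n])$. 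Hence $LT_\prec(T\cdot P)=(-1)^n{\bf t}^{\F(S)_{\substack{\\\prec}}+\sum_{i=1}^n a_i}$. For $R$, the identity $R({\bf t})\prod_{j=1}^d(1-t_j)=P({\bf t})$ forces $\deg_{t_j}R=\bigl(\sum_i a_i\bigr)_j-1$ in each coordinate, so every multidegree appearing in $R$ is coordinatewise $\leq\sum_i a_i-(1,\ldots,1)$. Because coordinatewise comparison on $\NN^d$ is refined by any term order, every multidegree of $R$ is also $\preceq\sum_i a_i-(1,\ldots,1)$.

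To finish, observe that $\F(S)_{\substack{\\\prec}}+(1,\ldots,1)$ is a nonzero element of $\NN^d$, so it is $\succ 0$ in any term order; therefore $\sum_i a_i-(1,\ldots,1)\prec\F(S)_{\substack{\\\prec}}+\sum_i a_i$. In the difference $\mathcal{K}(k[S];{\bf t})=R({\bf t})-T({\bf t})P({\bf t})$, every monomial coming from $R$ has multidegree strictly below that of $LT_\prec(T\cdot P)$, and no cancellation can occur there. Consequently
\[ LT_\prec\bigl(\mathcal{K}(k[S];{\bf t})\bigr)=(-1)^{n-1}{\bf t}^{\F(S)_{\substack{\\\prec}}+\sum_{i=1}^n a_i}, \]
from which $\F(S)_{\substack{\\\prec}}=\exp\bigl(LT_\prec\mathcal{K}(k[S];{\bf t})\bigr)-\sum_{i=1}^n a_i$ follows. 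The main delicate point is the first step: justifying that $R({\bf t})$ is a polynomial (this is exactly where $\cone(S)\cap\NN^d=\NN^d$ enters essentially); after that, the remainder reduces to careful bookkeeping of how $\prec$ interacts with coordinatewise comparison and addition in $\NN^d$.
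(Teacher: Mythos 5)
Your proof is correct and follows essentially the same route as the paper: both start from the decomposition $\prod_{j=1}^d(1-t_j)^{-1}=\H(k[S];{\bf t})+\sum_{a\in\Hc(S)}{\bf t}^a$ and identify the leading term of $\mathcal{K}(k[S];{\bf t})$ as ${\bf t}^{\F(S)_{\prec}+\sum_i a_i}$ up to sign. The only difference is one of bookkeeping: the paper clears the denominator by multiplying the identity through by $\prod_j(1-t_j)$, whereas you keep $R=P({\bf t})/\prod_j(1-t_j)$ and justify separately (via finite generation of $k[t_1,\ldots,t_d]$ as a $k[S]$-module) that it is a polynomial of coordinatewise degree below $\sum_i a_i$ --- your leading-term comparison is in fact spelled out more carefully than the paper's.
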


\begin{proof}
	We have
	\[
	\frac{1}{\prod_{i=1}^d(1-t_i)} = \sum_{a \in \mathbb{N}^d} {\bf t}^a. 
	\]
	Since $\cone(S)  \cap \NN^d = \mathbb{N}^d$, we can write
	\begin{align*}
		\frac{1}{\prod_{i=1}^d(1-t_i)} = \sum_{a\in S}{\bf t}^a + \sum_{a \in \Hc(S)} {\bf t}^a &= \H(k[S];{\bf t})+\sum_{a \in \Hc(S)}{\bf t}^a  \\
		&= \frac{\mathcal{K}(k[S],{\bf t})}{\prod_{i=1}^n(1-{\bf t}^{a_i})}+\sum_{a \in \Hc(S)}{\bf t}^a.
	\end{align*}
	Therefore, we have 
	\[
	\mathcal{K}(k[S];{\bf t})\prod_{i=1}^d(1-t_i) =
	\prod_{i=1}^n(1-{\bf t}^{a_i}) - \prod_{i=1}^d(1-t_i)\prod_{i=1}^n(1-{\bf t}^{a_i}) \bigg( \sum_{a \in \Hc(S)}{\bf t}^a \bigg).
	\] 
	Since $\Hc(S)$ is finite, it follows that for each term order $\prec$ on $\mathbb{N}^d$, $\max_{\prec} \Hc(S)$ exists and is a Frobenius element of $S$. Therefore,
	\[
	LT_{\prec} \Bigg(\mathcal{K}(k[S];{\bf t})\prod_{i=1}^d(1-t_i) \Bigg) = LT_{\prec} \Bigg(\prod_{i=1}^d(1-t_i)\prod_{i=1}^n(1-{\bf t}^{a_i}) \Big(\sum_{a \in \Hc(S)}{\bf t}^a \Big) \Bigg).
	\]
	Hence,
	\[
	\exp (LT_{\prec}(\mathcal{K}(k[S];{\bf t})) = \F(S)_{\substack{\\\prec}}+\sum_{i=1}^n a_i.
	\]
\end{proof}

In \cite{frobenius}, the authors proved that the Frobenius number of a numerical semigroup $S$ is the $a$-invariant of the ring $k[S].$ Since any numerical semigroup $S$ is a $\MPD$-semigroup, with $\cone(S) \cap \NN = \NN$ and $\Hc(S) = \NN \setminus S,$ the result \cite[Theorem 3.1]{frobenius} now follows as a corollary of Theorem \ref{FrobExp}.

\begin{Example}
	Let $S = \langle a_1=(0,1),a_2=(2,0),a_3=(3,0),a_4=(1,3) \rangle.$ Then $\cone(S)  \cap \NN^2 = \mathbb{N}^2$ and $\Hc(S)= \{(1,0),(1,1),(1,2)\}$ is finite. Therefore, $S$ a symmetric $\MPD$-semigroup with $\F(S)_{\substack{\\\prec}} = (1,2)$ for any term order $\prec.$ We have,
	\[
	\H(k[S];{\bf t}) = \frac{1-t_1^6-t_1^3t_2^3-t_1^4t_2^3-t_1^2t_2^6+t_1^6t_2^3+t_1^7t_2^3+t_1^4t_2^6+t_1^5t_2^6-t_1^7t_2^6}{(1-t_1t_2^3)(1-t_1^3)(1-t_1^2)(1-t_2)}.
	\]
	Therefore,
	\[ 
	\mathcal{K}(k[S];{\bf t}) = 1-t_1^6-t_1^3t_2^3-t_1^4t_2^3-t_1^2t_2^6+t_1^6t_2^3+t_1^7t_2^3+t_1^4t_2^6+t_1^5t_2^6-t_1^7t_2^6
	\]
	and hence, $\F(S)_{\substack{\\\prec}} = \exp (LT_{\prec} \ \mathcal{K}(k[S];{\bf t})) - \sum_{i=1}^4 a_4 = (7,6) - (6,4) = (1,2)$.
\end{Example}

\begin{Corollary}
	Let $S_1$ and $S_2$ be two $\Cc$-semigroups of $\mathbb{N}^d$ such that $\cone(S_1)  \cap \NN^d =\mathbb{N}^d = \cone(S_2)  \cap \NN^d.$ Then $S_1$ and $S_2$ can not be glued.
\end{Corollary}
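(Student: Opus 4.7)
The plan is to observe that the definition of gluing imposes a rank constraint on $G(S_1)\cap G(S_2)$ that is incompatible with our hypotheses. Namely, a gluing $S = S_1 +_c S_2$ requires an element $c \in S_1 \cap S_2$ with $G(S_1) \cap G(S_2) = c\ZZ$, a rank-one subgroup of $\ZZ^d$. I will show that under the given hypotheses $G(S_1) \cap G(S_2) = \ZZ^d$, and this rank mismatch will produce the contradiction for $d \geq 2$; a short separate check handles $d = 1$.

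The crucial step is computing $G(S_i)$. Since $S_i$ is a $\Cc$-semigroup, $\Hc(S_i) = (\cone(S_i) \setminus S_i) \cap \NN^d$ is finite, and because $\cone(S_i) \cap \NN^d = \NN^d$, we in fact get that $\NN^d \setminus S_i = \Hc(S_i)$ is finite. Hence for each standard basis vector $e_j$ of $\ZZ^d$, all sufficiently large multiples $k e_j$ lie in $S_i$, so $e_j = (k+1)e_j - k e_j \in G(S_i)$. This shows $\ZZ^d \subseteq G(S_i)$, and the reverse inclusion is immediate from $S_i \subseteq \NN^d$. Therefore $G(S_i) = \ZZ^d$ for $i = 1, 2$, and consequently $G(S_1) \cap G(S_2) = \ZZ^d$.

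For $d \geq 2$ this intersection has rank $d \geq 2$, so it cannot equal $c\ZZ$ for any $c$, contradicting the gluing condition. For $d = 1$ the condition $\ZZ = c\ZZ$ forces $c = 1$, and then $c \in S_1 \cap S_2$ gives $S_1 = S_2 = \NN$; but the unique minimal generating set $\{1\}$ of $\NN$ admits no nontrivial partition, so no gluing can exist in this case either.

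There is no real obstacle; the whole argument is driven by the single observation that a $\Cc$-semigroup whose cone captures all of $\NN^d$ must already generate the full lattice $\ZZ^d$ as a group, after which the rank incompatibility with the cyclic group $c\ZZ$ is immediate. The only point requiring a little care is the degenerate dimension $d = 1$, where the rank argument is vacuous and one instead uses the impossibility of nontrivially partitioning $\{1\}$.
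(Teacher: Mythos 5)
Your proof is correct, but it takes a genuinely different route from the paper's. The paper deduces the corollary from Theorem \ref{FrobExp}: using the Hilbert--series formula for a gluing it computes $\F(S)_{\prec} = s + \F(S_1)_{\prec} + \F(S_2)_{\prec}$ for the glued semigroup $S = S_1 +_s S_2$, and then gets a contradiction from the containment $\Hc(S) \subseteq \Hc(S_1)$, which forces $\F(S)_{\prec} \preceq \F(S_1)_{\prec}$. You instead attack condition (2) of the definition of gluing directly: since each $S_i$ has finite complement in $\NN^d$ (this is exactly where the hypotheses that $S_i$ is a $\Cc$-semigroup and $\cone(S_i)\cap\NN^d=\NN^d$ enter), each $S_i$ generates the full lattice, $G(S_i)=\ZZ^d$, so $G(S_1)\cap G(S_2)=\ZZ^d$ has rank $d$ and cannot equal a cyclic group $c\ZZ$ when $d\geq 2$; the degenerate case $d=1$ collapses to $S_1=S_2=\NN$, which you rule out separately. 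Your argument is more elementary --- it avoids the Hilbert series, Theorem \ref{FrobExp}, and Frobenius elements altogether --- and it isolates the obstruction as a purely lattice-theoretic rank mismatch, showing that condition (2) alone already fails regardless of the choice of gluing element. What the paper's route buys is an application of the Frobenius-element formula it has just established, which explains the corollary's placement after Theorem \ref{FrobExp}. One minor polish: for $d=1$ it is slightly cleaner to note that $c=1\in S_1$ would force $\Hc(S_1)=\emptyset$, contradicting the requirement that a $\Cc$-semigroup have non-empty $\Hc$, rather than appealing to the impossibility of partitioning the generating set $\{1\}$.
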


\begin{proof}
	Let $S_1 = \langle a_1,\ldots,a_e \rangle,$ $S_2 = \langle a_{e+1},\ldots,a_n \rangle$ and suppose $S = S_1 +_s S_2$ be the gluing of $S_1$ and $S_2$ by $s$. Using \cite[Theorem 4.3]{AssiA}, we get
	\begin{align*}
		\H(k[S];{\bf t}) 
		&= (1 - {\bf t}^s) ~\H(k[S_1]; {\bf t}) ~\H(k[S_2]; {\bf t})  \\
		&= (1 - {\bf t}^s) ~\frac{\mathcal{K}(k[S_1]; {\bf t})}{\prod_{i=1}^{e}(1 - {\bf t}^{a_i})} ~\frac{\mathcal{K}(k[S_2]; {\bf t})}{\prod_{i=e+1}^{n}(1 - {\bf t}^{a_i})}.
	\end{align*}
	 For any term order $\prec$, we have
	\begin{align*}
		\exp(LT_{\prec} \ \mathcal{K}(k[S]; {\bf t})) 
		&= \exp\bigg(LT_{\prec} \Big( (1-{\bf t}^s) ~\mathcal{K}(k[S_1]; {\bf t}) ~\mathcal{K}(k[S_2; {\bf t}]) \Big) \bigg) \\
		&= s + \exp(LT_{\prec} \ \mathcal{K}(k[S_1]; {\bf t})) + \exp(LT_{\prec} \ \mathcal{K}(k[S_2]; {\bf t})).
	\end{align*}
	Since $\cone(S_1)  \cap \NN^d =\mathbb{N}^d = \cone(S_2)  \cap \NN^d$, we have $\cone(S) \cap \NN^d = \NN^d.$ Therefore, $S$ is a $\Cc$-semigroup. From Theorem \ref{FrobExp} it follows that
	\begin{align*}
		\F(S)_{\substack{\\\prec}} 
		&= \exp(LT_{\prec} \ \mathcal{K}(k[S]; {\bf t})) - \sum_{i=1}^{n} a_i  \\
		&= s + \exp(LT_{\prec} \ \mathcal{K}(k[S_1]; {\bf t})) + \exp(LT_{\prec} \ \mathcal{K}(k[S_2]; {\bf t})) - \sum_{i=1}^{n} a_i  \\
		& = s + \bigg( \exp(LT_{\prec} \ \mathcal{K}(k[S_1]; {\bf t})) - \sum_{i=1}^{e} a_i \bigg) + \bigg( \exp(LT_{\prec} \ \mathcal{K}(k[S_2]; {\bf t})) - \sum_{i=e+1}^{n} a_i \bigg)  \\
		&= s + \F(S_1)_{\substack{\\\prec}} + \F(S_2)_{\substack{\\\prec}}.
	\end{align*}
 Now, observe that $\Hc(S) \subseteq \Hc(S_1)$. This implies  
 \[ \F(S)_{\substack{\\\prec}}= s + \F(S_1)_{\substack{\\\prec}} + \F(S_2)_{\substack{\\\prec}} = \max_{\prec} \Hc(S) \preceq \max_{\prec} \Hc(S_1) =  \F(S_1)_{\substack{\\\prec}}, \] which is absurd.
\end{proof}

\section{$\RF$-matrices of submonoids of $\NN^d$}

Let $S = \langle a_1, \ldots, a_n \rangle$ be a $\MPD$-semigroup in $\NN^d,$ minimally generated by  $ a_1, \ldots, a_n.$ We recall the notion of row-factorization matrix ($\RF$-matrix), introduced by A. Moscariello in \cite{moscariello}.

\begin{Definition}
	Let $f \in \PF(S).$ An $n \times n$ matrix $M = (m_{ij})$ is an $\RF$-matrix of $f$ if
	$m_{ii} = -1$ for every $i$, $m_{ij} \in \NN$ if $i \neq j$ and for every $i = 1, \ldots ,n$, $\sum_{j=1}^n m_{ij} a_j = f.$
\end{Definition}

While $\RF$-matrices were defined for pseudo-Frobenius elements in numerical semigroups, using Definition \ref{pf}, we observe that the above definition holds in the case of pseudo-Frobenius elements in $\MPD$-semigroups over $\NN^d.$ Note that an $\RF$-matrix of $f$ need not be uniquely determined. Thus, the notation $\RF(f)$ will denote one of the possible $\RF$-matrices of $f.$

In \cite{etoRowFactor}, Eto proved that when $S$ is a numerical semigroup, the determinant of an $\RF$-matrix of a pseudo-Frobenius element $f$ is a multiple of $f.$ We study the behavior of the determinant of a row-factorization matrix of a pseudo-Frobenius element of a $\MPD$-semigroup in the next result. 

\begin{Theorem} \label{detRF}
	If $\rank G(S) > 1,$ then the determinant of row-factorization matrices is zero. 
\end{Theorem}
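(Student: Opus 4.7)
The plan is to encode the defining equations of an RF-matrix as a single matrix equation and then extract a linear dependence among its rows from a dimension count on the left null space.

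Let $M = (m_{ij})$ be an RF-matrix of some $f \in \PF(S)$, and let $A$ be the $n \times d$ integer matrix whose $i$-th row is $a_i \in \NN^d$. The defining relations $\sum_{j=1}^n m_{ij} a_j = f$ for all $i$ can be written collectively as
\begin{equation*}
M A \;=\; \mathbf{1}\, f,
\end{equation*}
where $\mathbf{1} \in \ZZ^n$ is the all-ones column vector and $f$ is viewed as a $1 \times d$ row vector. In particular, for any two rows $M_i$ and $M_k$ of $M$ we have $(M_i - M_k)\, A = 0$, so the row differences $M_i - M_k$ lie in the left null space $N$ of $A$.

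Next I would invoke the equality $\rank A = \rank G(S)$: since $a_1, \dots, a_n$ generate $G(S)$ as a subgroup of $\ZZ^d$, the $\QQ$-rank of $A$ coincides with the rank of $G(S)$. By the hypothesis $\rank G(S) > 1$, we therefore have $\rank A \geq 2$, and hence
\begin{equation*}
\dim_{\QQ} N \;=\; n - \rank A \;\leq\; n - 2.
\end{equation*}

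Now consider the $n-1$ row differences $v_i := M_i - M_1 \in N$ for $i = 2, \dots, n$. Since $\dim N \leq n - 2$, these vectors must be linearly dependent: there exist rationals $c_2, \dots, c_n$, not all zero, with $\sum_{i=2}^n c_i v_i = 0$, i.e.\
\begin{equation*}
\sum_{i=2}^n c_i M_i \;=\; \Bigl(\sum_{i=2}^n c_i\Bigr) M_1.
\end{equation*}
Setting $c_1 := -\sum_{i=2}^n c_i$ yields a nontrivial relation $\sum_{i=1}^n c_i M_i = 0$ among the rows of $M$, whence $\det M = 0$. There is no real obstacle in this argument; the only point that warrants a sentence is the identification $\rank A = \rank G(S)$, which I would justify by noting that a $\ZZ$-generating set of $G(S)$ spans the same $\QQ$-vector subspace of $\QQ^d$ as a $\QQ$-basis of the row span of $A$.
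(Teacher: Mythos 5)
Your proposal is correct and follows essentially the same route as the paper: the paper phrases your left null space of $A$ as the kernel $L$ of the map $\ZZ^n \to G(S)$, computes $\rank L = n - \rank G(S)$ from the resulting exact sequence, and bounds the rank of the matrix of row differences $R_i - R_1$ by $n - \rank G(S) \leq n-2$, concluding $\rank \RF(f) \leq n-1$. Your explicit extraction of a nontrivial linear relation among the rows is just a slightly more hands-on phrasing of the same final step.
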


\begin{proof}
	Let $S = \langle a_1,\ldots,a_n \rangle ~ \subseteq \mathbb{N}^d$ be a $\MPD$-semigroup, We can define a natural map
	\[
	\phi: \mathbb{Z}^n \rightarrow \mathbb{Z}^d ~\text{such that} ~ (\alpha_1,\alpha_2,\ldots,\alpha_n) \mapsto \sum_{j=1}^n \alpha_ja_j.
	\]
	Let $L$ be the kernel of the map $\phi$. Observe that $\Im (\phi) = G(S).$ Thus, we have an exact sequence
	\[
	0 \rightarrow L \rightarrow \mathbb{Z}^n \xrightarrow{\phi} G(S) \rightarrow 0. 
	\]
	Since $G(S)$ is free and finitely generated as $\mathbb{Z}$-module, we get $\rank(L) = n-\rank G(S)$. Let $f \in \PF(S)$ and $\RF(f)$ be an $\RF$-matrix of $f$. Let $R_1,R_2,\ldots,R_n$ denote the rows of $\RF(f)$. Consider matrices
	\begin{align*}
		M =
		\left[
		\begin{array}{c}
			R_2-R_1\\
			R_3-R_1\\
			\vdots \\
			R_n-R_1
		\end{array}
		\right] 
		\text{  and  }
		M' = 
		\begin{bmatrix}
			R_1 \\
			M 
		\end{bmatrix}.
	\end{align*}
	
	Note that each row of $M$ is an element of $L$. Hence, $\rank (M) \leq n- \rank G(S).$
	Since $\rank (M') \leq \rank (M) + 1$ and $\mathrm{rowspace} (M') = \mathrm{rowspace} (\RF(f))$, it follows that 
	\[ \rank \RF(f) \leq \rank (M) + 1 \leq n- \rank G(S)+1. \] 
	Hence, $\RF(f)$ can have non-zero determinant only if $\rank G(S)= 1$.
\end{proof}

\begin{Theorem}
	Let $f, f' \in \PF(S)$ with $f + f' \notin S.$ Set $\RF(f) = M = (m_{ij})$ and $\RF(f') = M' = (m'_{ij}).$ Then either $m_{ij} = 0$ or $m'_{ji} = 0$ for every pair $i \neq j.$ 
\end{Theorem}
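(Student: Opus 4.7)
The plan is to argue by contradiction: suppose for some pair $i \neq j$ we have $m_{ij} \geq 1$ and $m'_{ji} \geq 1$, and then show that $f + f'$ must lie in $S$, contradicting the hypothesis. The idea is to extract an explicit $\NN$-linear expression for $f + f'$ in terms of the generators $a_1,\ldots,a_n$, using exactly one row of $M$ and one row of $M'$.

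Concretely, I would select the $i$-th row of $M$ and the $j$-th row of $M'$, and rewrite the row-factorization identities as
\begin{align*}
    f + a_i &= m_{ij} a_j + \sum_{k \neq i,j} m_{ik} a_k, \\
    f' + a_j &= m'_{ji} a_i + \sum_{k \neq i,j} m'_{jk} a_k,
\end{align*}
where in each equation the diagonal entry $-1$ has been moved to the left-hand side. Adding these two identities and cancelling $a_i + a_j$ on both sides gives
\begin{equation*}
    f + f' \;=\; (m_{ij} - 1)\, a_j \;+\; (m'_{ji} - 1)\, a_i \;+\; \sum_{k \neq i,j} \bigl(m_{ik} + m'_{jk}\bigr)\, a_k .
\end{equation*}

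Now every off-diagonal entry of $M$ and $M'$ lies in $\NN$ by definition, so the coefficients of the $a_k$ for $k \neq i,j$ are non-negative. Under the hypothesis $m_{ij} \geq 1$ and $m'_{ji} \geq 1$, the coefficients of $a_j$ and $a_i$ are also non-negative. Therefore the right-hand side is a non-negative integer combination of the generators $a_1,\ldots,a_n$, which forces $f + f' \in S$, contradicting the assumption $f + f' \notin S$. Hence $m_{ij} = 0$ or $m'_{ji} = 0$, as required.

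There is really no obstacle here: the whole argument is a one-line manipulation of the two row identities, and it needs neither the $\MPD$ hypothesis beyond what is built into the definition of $\PF(S)$, nor any property of $d$. The only thing to be slightly careful about is that after subtracting $a_i + a_j$ from both sides one does not accidentally introduce negative coefficients anywhere else, which is guaranteed because the $-1$'s on the diagonal are exactly what get cancelled.
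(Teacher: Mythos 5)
Your argument is correct and is essentially the same as the paper's, which simply cites \cite[Lemma 3.2]{herzogWatanabe}; that lemma's proof is exactly this computation of adding the $i$-th row identity for $f$ to the $j$-th row identity for $f'$ and observing that $m_{ij}, m'_{ji} \geq 1$ would exhibit $f+f'$ as a non-negative integer combination of the generators. Nothing further is needed.
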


\begin{proof}
	Same as proof of \cite[Lemma 3.2]{herzogWatanabe}.
\end{proof}

\begin{Example}
Let $S = \langle (0,1),(3,0),(4,0),(1,4),(5,0),(2,7) \rangle$. Observe that 
\[
\Hc(S) = \{(1,0),(2,0),(1,1),(2,1),(1,2),(2,2),(1,3),(2,3),(2,4),(2,5),(2,6)\}.
\]
Therefore, $S$ is a $\Cc$-semigroup. Note that $\PF(S)= \{(1,3),(2,6)\}.$ Take $f = f' = (1,3).$ Then we have
\begin{align*}
	\RF(1,3) =
	\left[
	\begin{array}{c c c c c c}
		-1 & 0 & 0 & 1 & 0 & 0\\
		3 & -1 & 1 & 0 & 0 & 0\\
		3 & 0 & -1 & 0 & 1 & 0\\
		0 & 0 & 0 & -1 & 0 & 1\\
		3 & 2 & 0 & 0 & -1 & 0\\
		10 & 1 & 0 & 0 & 0 & -1
	\end{array}
	\right].
\end{align*}
\end{Example}

For a vector $b = (b_1,\ldots,b_n) \in \ZZ^n$, we let $b^{+}$ denote the vector whose $i$-th entry is $b_i$ if $b_i \geq 0$, and is zero otherwise, and we let $b^{-} = b^{+} - b.$ Then $b = b^{+} - b^{-}$ with $b^{+}, b^{-} \in \NN^n.$ As expected, for a pseudo-Frobenius element $f$, the rows of $\RF(f)$ produce binomials in $I_S.$ Let $e_1,\ldots,e_n$ be the canonical unit vectors of $\ZZ^n.$

\begin{Theorem}  \label{RFBinomial}
	Let $m_1, \ldots, m_n$ be the row vectors of $\RF(f)$, and set $m_{(ij)} = m_i - m_j$ for all $i, j$ with $1 \leq i < j \leq n.$ Then $\phi_{ij} = {\bf x}^{m_{(ij)}^+} - {\bf x}^{m_{(ij)}^-} \in I_S$ for all $i < j.$ Moreover, $\deg \phi_{ij} \leq f + a_i + a_j$ (component-wise). Equality holds, if the vectors $m_i + e_i + e_j$ and $m_j + e_i + e_j$ have disjoint support, in which case there is no cancellation when taking the difference
	of these two vectors.
\end{Theorem}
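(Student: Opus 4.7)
The plan is to exploit the defining property of $\RF(f)$---each row $m_i$ satisfies $\sum_{k=1}^n m_{ik}\, a_k = f$---together with careful bookkeeping of the diagonal $-1$'s. For the first claim, subtracting row $j$ from row $i$ yields $\sum_{k=1}^n (m_{(ij)})_k\, a_k = 0$. Splitting $m_{(ij)} = m_{(ij)}^+ - m_{(ij)}^-$ into non-negative parts with disjoint supports translates this into $\sum_k (m_{(ij)}^+)_k\, a_k = \sum_k (m_{(ij)}^-)_k\, a_k$, which is precisely the criterion recalled in Section 2 for the binomial $\phi_{ij}$ to lie in $I_S$.

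For the degree bound, the crux is the coordinate-wise inequality $m_{(ij)}^+ \leq m_i + e_i + e_j$ in $\NN^n$, which I would verify in three cases. At coordinate $i$, $m_{ii} = -1$ and $m_{ji} \geq 0$ force $(m_{(ij)})_i < 0$, so $(m_{(ij)}^+)_i = 0 = (m_i + e_i + e_j)_i$. At coordinate $j$, $m_{jj} = -1$ gives $(m_{(ij)})_j = m_{ij} + 1$, matching $(m_i + e_i + e_j)_j$. For $k \notin \{i,j\}$, $(m_{(ij)}^+)_k = \max(0, m_{ik} - m_{jk}) \leq m_{ik}$ because $m_{jk} \geq 0$. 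Pairing the inequality componentwise with $(a_1, \ldots, a_n)$ and using $\sum_k (m_i + e_i + e_j)_k\, a_k = f + a_i + a_j$ (a consequence of the row-sum identity, with the $-1$ at position $i$ absorbed by the shift $e_i$) yields $\deg \phi_{ij} \leq f + a_i + a_j$ componentwise.

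For the equality criterion, the inequality above is tight at every coordinate iff $\min(m_{ik}, m_{jk}) = 0$ for every $k \notin \{i,j\}$. This is precisely the disjointness of the supports of $m_i + e_i + e_j$ and $m_j + e_i + e_j$, since at positions $i$ and $j$ the supports are automatically disjoint: the diagonal $-1$ zeros out the $i$-th coordinate of $m_i + e_i + e_j$ and the $j$-th coordinate of $m_j + e_i + e_j$. Under this hypothesis no cancellation occurs in $(m_i + e_i + e_j) - (m_j + e_i + e_j) = m_{(ij)}$, forcing $m_{(ij)}^+ = m_i + e_i + e_j$ and $m_{(ij)}^- = m_j + e_i + e_j$ and hence $\deg \phi_{ij} = f + a_i + a_j$ exactly. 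The main obstacle is not conceptual but the bookkeeping above: one must carefully track how the diagonal $-1$'s of $\RF(f)$ interact with the shifts $e_i + e_j$ to keep the auxiliary vectors inside $\NN^n$ while preserving the row-sum identity.
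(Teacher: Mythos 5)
Your proof is correct, and it is essentially the argument the paper intends: the paper's proof of Theorem \ref{RFBinomial} simply defers to Herzog--Watanabe's Lemma 3.4 for numerical semigroups, and your coordinate-by-coordinate verification of $m_{(ij)}^+ \leq m_i + e_i + e_j$ (with the diagonal $-1$'s absorbed by the shifts $e_i+e_j$) is exactly that computation carried over to $\NN^d$, where the componentwise degree bound follows because each $a_k$ lies in $\NN^d$.
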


\begin{proof}
	Same as proof of \cite[Lemma 3.4]{herzogWatanabe}.
\end{proof}

\begin{Example}\label{RFBinomialexample}
Let $S = \langle (1,3),(1,5),(2,1),(2,3),(5,1) \rangle$. Observe that $S$ is not a $\Cc$-semigroup. By using Macaulay2, we have $\PF(S) = \{(5,13),(6,11),(9,6)\}$ and 
\begin{align*}
	\RF(5,13) =
	\left[
	\begin{array}{ c c c c c}
		-1 & 2 & 0 & 2 & 0\\
		6 & -1 & 0 & 0 & 0\\
		0 & 1 & -1 & 3 & 0\\
		5 & 0 & 1 & -1 & 0\\
		0 & 2 & 4 & 0 & -1
	\end{array}
	\right].
\end{align*}

Now consider all the choices of pairs of rows $m_i , m_j$ such that $i<j$. Then the resulting $m_{(ij)}$ and $\phi_{ij}$ are
\begin{table}[H]
	\begin{tabular}{p{2.25in}p{2.25in}p{2.25in}}
		$m_{(12)} = (-7,3,0,2,0)$, & $\phi_{12} = x_2^3x_4^2-x_1^7$, & $\deg(\phi_{12}) = (7,21)$ \\
		$m_{(13)} = (-1,1,1,-1,0)$, & $\phi_{13} = x_2x_3-x_1x_4$, & $\deg(\phi_{13}) = (3,6)$ \\
		$m_{(14)} = (-6,2,-1,3,0)$, & $\phi_{14} = x_2^2x_4^3-x_1^6x_3$, & $\deg(\phi_{14}) = (8,19)$ \\
		$m_{(15)} = (-1,0,-4,2,1)$, & $\phi_{15} = x_4^2x_5-x_1x_3^4$, & $\deg(\phi_{15}) = (9,7)$ \\
		$m_{(23)} = (6,-2,1,-3,0)$, & $\phi_{23} = x_1^6x_3-x_2^2x_4^3$, & $\deg(\phi_{23}) = (8,19)$ \\
		$m_{(24)} = (1,-1,-1,1,0)$, & $\phi_{24} = x_1x_4-x_2x_3$, & $\deg(\phi_{24}) = (3,6)$ \\
		$m_{(25)} = (6,-3,-4,0,1)$, & $\phi_{25} = x_1^6x_5-x_2^3x_3^4$, & $\deg(\phi_{25}) = (11,19)$ \\
		$m_{(34)} = (-5,1,-2,4,0)$, & $\phi_{34} = x_2x_4^4-x_1^5x_3^2$, & $\deg(\phi_{34}) = (9,17)$ \\
		$m_{(35)} = (0,-1,-5,3,1)$, & $\phi_{35} = x_4^3x_5-x_2x_3^5$, & $\deg(\phi_{35}) = (11,10)$ \\
		$m_{(45)} = (5,-2,-3,-1,1)$, & $\phi_{45} = x_1^5x_5-x_2^2x_3^3x_4$, & $\deg(\phi_{45}) = (10,16).$
	\end{tabular}
\end{table}

Observe that for $i=3$ and $j=4,$ the vectors $m_3 + e_3 + e_4$ and $m_4 + e_3 +e_4$ have disjoint support. Hence, $\deg (\phi_{34}) = f+a_3+a_4 = (5,13)+(2,1)+(2,3) = (9,17).$ Cancellation occurs in computing $m_{45}$ and degree of $\phi_{45}$ is $(10,16)$ which is less than $(12,17)=f+a_4+a_5.$

\end{Example}

\begin{Definition}
	The binomials $\phi_{ij}$ defined in Theorem {\rm \ref{RFBinomial}} are called $\RF(f)$-relations. We call a binomial relation $\phi \in I_S$ an \textbf{$\RF$-relation} if it is an $\RF(f)$-relation for some $f \in \PF(S)$.
\end{Definition}

\begin{Example}\label{RFrelationexample}
	Let $S = \langle (2,11),(3,0),(5,9),(7,4) \rangle.$ Note that $\Hc(S)$ is not finite and hence $S$ is not $\Cc$-semigroup. By Macaulay2, we see that 
\begin{center}
$
0 \longrightarrow R(-(94,82)) \oplus R(-(81,93)) \longrightarrow R^6 \longrightarrow R^5 \longrightarrow R \longrightarrow \frac{R}{I_S} \longrightarrow 0
$
\end{center}
 is a minimal graded free resolution of $k[S]$ over $R$, where $R = k[x_1,x_2,x_3,x_4]$. Therefore, $S$ is a $\MPD$-semigroup and the degrees of minimal generators of the  second syzygy module are $(94,82)$ and $(81,93)$. 
Hence $\PF(S) = \{(64,69),(77,58)\}$ and the $\RF$-matrices $\RF(64,69), \RF(77,58)$ are respectively
	\begin{align*}
		\begin{bmatrix}
    		-1 &  4 & 8 & 2     \\
		    0  &  -1 & 5 & 6   \\
		    6 & 12 & -1 & 3\\
		    5 & 17 & 2 & -1
		\end{bmatrix}
		\text{ and }
		\begin{bmatrix}
		    -1  &  4 & 5 & 6      \\
		    0  &  -1   & 2 & 10  \\
		    5 & 17 & -1  & 3 \\
		    4 & 22 & 2 & -1
		\end{bmatrix}. 
	\end{align*}
	Then $I_S$ is minimally generated by $\RF$-relations as $I_S = \langle \phi_1,\ldots, \phi_5 \rangle,$ where
	\begin{align*}
		\phi_1:= x_2^5x_3^3-x_1x_4^4,~\phi_2:= x_1^6x_2^{13}-x_3^6x_4^3,~\phi_3:= x_1^5x_2^{18}-x_3^3x_4^7,~\phi_4:= x_1^4x_2^{23}-x_4^{11},~\phi_5:= x_3^9-x_1^7x_2^8x_4
	\end{align*}
	We see that $\phi_1,\phi_2,\phi_3,\phi_4$ are $\RF(77,58)$-relations and $\phi_5$ is a $\RF(64,69)$-relation.
\end{Example}

The following result provides a sufficient condition for the defining ideal $I_S$ to be generated
by $\RF$-relations.

\begin{Theorem} \label{RFRelation}
	Suppose $I_S$ admits a system of binomial generators $\phi_1,\ldots,\phi_m$ satisfying the following property: for each $k$ there exist $i < j$ and $f \in \PF(S)$ such that $\deg \phi_k = f + a_i + a_j$ and $\phi_k = u-v$, where $u$ and $v$ are monomials such that $x_i|u$ and $x_j|v$. Then $I_S$ is generated by $\RF$-relations.
\end{Theorem}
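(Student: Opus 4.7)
The plan is to verify that each $\phi_k$ belongs to the ideal $J$ generated by all $\RF$-relations; since $J \subseteq I_S$ by Theorem \ref{RFBinomial}, this gives $I_S \subseteq J$ and hence equality. To this end, for the given data $(\phi_k,f,i,j)$ I would construct a specific $\RF(f)$-matrix $M$ whose $(i,j)$-relation divides $\phi_k$.

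Write $u=\prod_l x_l^{\alpha_l}$ and $v=\prod_l x_l^{\beta_l}$, so $\alpha_i\geq 1$, $\beta_j\geq 1$, and $\sum_l\alpha_l a_l=\sum_l\beta_l a_l=f+a_i+a_j$. The first observation is that $\alpha_j=0$ and $\beta_i=0$: if $\alpha_j\geq 1$, then $(\alpha_i-1)a_i+(\alpha_j-1)a_j+\sum_{l\neq i,j}\alpha_l a_l=f$ would exhibit $f\in S$, contradicting $f\in\PF(S)$. Rearranging yields the factorizations
\[ f+a_j=(\alpha_i-1)a_i+\sum_{l\neq i,j}\alpha_l a_l,\qquad f+a_i=(\beta_j-1)a_j+\sum_{l\neq i,j}\beta_l a_l, \]
with non-negative coefficients and zero in the distinguished coordinate.

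Using these, I would assemble $M$ as follows. Row $i$: $m_{ii}=-1$, $m_{ij}=\beta_j-1$, and $m_{il}=\beta_l$ for $l\neq i,j$. Row $j$: symmetrically, $m_{jj}=-1$, $m_{ji}=\alpha_i-1$, $m_{jl}=\alpha_l$ for $l\neq i,j$. For each $k\notin\{i,j\}$, the pseudo-Frobenius property gives $f+a_k\in S$, and any factorization $f+a_k=\sum_l c_l a_l$ must have $c_k=0$ (else $f\in S$); so take row $k$ to be $(c_l)_{l\neq k}$ with $m_{kk}=-1$. All off-diagonal entries are non-negative, so $M$ is a valid $\RF(f)$-matrix.

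It remains to compare $\phi_{ij}$ (derived from rows $i<j$ of $M$) with $\phi_k$. A direct computation gives $(m_i-m_j)_i=-\alpha_i$, $(m_i-m_j)_j=\beta_j$, and $(m_i-m_j)_l=\beta_l-\alpha_l$ for other $l$. Writing $w=\gcd(u,v)$, one checks that ${\bf x}^{(m_i-m_j)^+}=v/w$ and ${\bf x}^{(m_i-m_j)^-}=u/w$, whence $\phi_{ij}=(v-u)/w$ and therefore $\phi_k=u-v=-w\cdot\phi_{ij}\in J$. The main subtlety is this possible common factor $w$: $\phi_{ij}$ need not equal $\pm\phi_k$ on the nose, only up to the monomial multiplier $w$, but this is still enough to place each $\phi_k$ in the ideal generated by $\RF$-relations. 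The overall argument mirrors the numerical semigroup case \cite[Theorem 3.5]{herzogWatanabe}.
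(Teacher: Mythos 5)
Your proof is correct and follows essentially the same route as the paper, which simply defers to the argument of Herzog--Watanabe \cite[Lemma 3.7]{herzogWatanabe}: use $x_i\mid u$, $x_j\mid v$ and $f\notin S$ to force $\alpha_j=\beta_i=0$, read off rows $i$ and $j$ of an $\RF(f)$-matrix from the two factorizations, fill the remaining rows using $f\in\PF(S)$, and observe that $\phi_k$ is (up to sign and the monomial factor $\gcd(u,v)$) the resulting $\RF$-relation $\phi_{ij}$. Your explicit treatment of the common factor $w=\gcd(u,v)$ is a careful touch that the cited argument handles the same way.
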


\begin{proof}
	Same as proof of \cite[Lemma 3.7]{herzogWatanabe}.
\end{proof}

\begin{Remark} \label{RFGluing}
	Let $S = \langle a_1,\ldots,a_n \rangle$ be a $\MPD$-semigroup. Suppose $S = S_1 +_d S_2,$ where $S_1 = \langle a_1,\ldots,a_e \rangle$ and $S_2 = \langle a_{e+1},\ldots,a_n \rangle.$ Let $h \in \PF(S).$ Then by Theorem {\rm\ref{PFGluing}}, there exist $f \in \PF(S_1)$ and $g \in \PF(S_2)$ such that $h = f+g+d.$ Since $f \in \PF(S_1),$ $f+d \in S_1.$ Write $f+d = \sum_{j=1}^e m_j a_j.$ Similarly, as $g+d \in S_2$, write $g+d = \sum_{j=e+1}^n m_j a_j.$ Hence the matrix
	\begin{align*}
		\left[
		\begin{array}{c|c} 
			\RF(f) & B \\ 
			\hline 
			C & \RF(g) 
		\end{array} 
		\right],
	\end{align*}
	where each row of the matrix $B$ is $(m_{e+1},\ldots,m_n)$ and each row of matrix $C$ is $(m_1,\ldots,m_e)$, serves as an $\RF$-matrix for $h.$
\end{Remark}

\begin{Theorem}  \label{GenRFRelations}
	Let $S = S_1 +_d S_2$ be a gluing of $\MPD$-semigroups $S_1 = \langle a_1,\ldots,a_e \rangle$ and $S_2= \langle a_1',\ldots,a_{e'}' \rangle$ such that $I_{S_1}$ and $I_{S_2}$ are generated by $\RF$-relations. If there exist $f \in \PF(S_1),$ $g \in \PF(S_2)$ with $f+d = \sum_{j=1}^e m_ja_j$, $g+d= \sum_{j=1}^{e'}m_j'a_j'$ and $(m^{(1)},\ldots,m^{(e)}), (m'^{(1)},\ldots,m'^{(e')})$ rows of $\RF(f)$, $\RF(g)$ respectively such that

	$(m_j - m^{(j)})$ and $ (m_j' - m'^{(j)})$ are non negative for all $j.$ Then $I_S$ is generated by $\RF$-relations.
\end{Theorem}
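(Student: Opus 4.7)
The plan is to exhibit a generating set of $I_S$ consisting entirely of $\RF$-relations. Recall that for a gluing $S = S_1 +_d S_2$, the defining ideal decomposes as $I_S = I_{S_1} + I_{S_2} + \langle \rho \rangle$, where $\rho = \prod_{j=1}^{e} x_j^{c_j} - \prod_{j=1}^{e'} y_j^{c_j'}$ is any binomial coming from a pair of factorizations $d = \sum_j c_j a_j = \sum_j c_j' a_j'$. So it suffices to (i) lift the given $\RF$-generators of $I_{S_1}$ and $I_{S_2}$ to $\RF$-relations in $S$, and (ii) produce an $\RF$-relation in $S$ that can replace $\rho$.

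For (i), let $\phi \in I_{S_1}$ be an $\RF(f_1)$-relation coming from rows $r,s$ of $\RF(f_1)$, for some $f_1 \in \PF(S_1)$. Fix any $g_0 \in \PF(S_2)$ and put $h_1 := f_1 + g_0 + d$, which lies in $\PF(S)$ by Theorem \ref{PFGluing}. Using Remark \ref{RFGluing} we assemble a block $\RF$-matrix of $h_1$ whose upper-left block is $\RF(f_1)$. Crucially, all rows of the top block share the same extension into the $y$-variables, so the difference of the same rows $r, s$ in the block matrix reproduces $\phi$ exactly. Hence $\phi$ is an $\RF(h_1)$-relation in $S$, and the symmetric argument treats $I_{S_2}$.

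For (ii), take $f \in \PF(S_1)$ and $g \in \PF(S_2)$ supplied by the hypothesis, set $h := f + g + d \in \PF(S)$, and form the block $\RF(h)$-matrix (Remark \ref{RFGluing}) associated with the factorizations $f + d = \sum_j m_j a_j$ and $g + d = \sum_j m_j' a_j'$. Consider the row $u$ of the top block whose $\RF(f)$-portion equals $(m^{(1)}, \ldots, m^{(e)})$, and the row $v$ of the bottom block whose $\RF(g)$-portion equals $(m'^{(1)}, \ldots, m'^{(e')})$. The vector $v - u$ has first $e$ entries $m_j - m^{(j)} \ge 0$ and last $e'$ entries $m'^{(j)} - m_j' \le 0$, by the hypothesis. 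Consequently the $\RF(h)$-relation produced by $u$ and $v$ is the clean binomial
\[ \sigma := \prod_{j=1}^{e} x_j^{m_j - m^{(j)}} \;-\; \prod_{j=1}^{e'} y_j^{m_j' - m'^{(j)}}, \]
whose $S$-degree is $d$ and whose two monomials have disjoint supports across the variable sets.

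Since $\sigma$ has the same mixed shape as $\rho$ (a pure-$x$ monomial minus a pure-$y$ monomial, both of $S$-degree $d$), and any two $x$-monomials (resp.\ $y$-monomials) of common degree differ by an element of $I_{S_1}$ (resp.\ $I_{S_2}$), we obtain $\rho \equiv \pm \sigma \pmod{I_{S_1} + I_{S_2}}$. Thus $I_S = I_{S_1} + I_{S_2} + \langle \sigma \rangle$, and combined with (i) this yields a generating set of $I_S$ consisting entirely of $\RF$-relations. The main subtlety, and the reason for the non-negativity hypothesis, is that for a generic choice of rows in the block matrix the difference would produce mixed-sign entries inside a single variable block, so the resulting binomial would fail to be a clean binomial eligible to play the role of the gluing generator $\rho$; the specific rows prescribed by the hypothesis are precisely those that force the $x$-coefficients into the positive part and the $y$-coefficients into the negative part.
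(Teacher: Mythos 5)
Your proposal is correct and follows essentially the same route as the paper: decompose $I_S$ as $I_{S_1}+I_{S_2}+\langle\text{gluing binomial}\rangle$ via Rosales' theorem, lift the $\RF$-generators of $I_{S_1}$ and $I_{S_2}$ through the block $\RF$-matrix of Remark \ref{RFGluing} (where the constant off-diagonal blocks make the lifted differences reproduce the original binomials), and realize the gluing binomial as the $\RF$-relation coming from the two rows singled out by the non-negativity hypothesis. The only cosmetic difference is that the paper simply \emph{defines} the gluing generator to be your $\sigma$ from the outset rather than first taking an arbitrary $\rho$ and noting $\rho\equiv\sigma \pmod{I_{S_1}+I_{S_2}}$.
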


\begin{proof}
	Let the sets $\rho_1$ and $\rho_2$ be the system of generators of $I_{S_1}$ and $I_{S_2}$ in $k[x_1,\ldots,x_e]$ and $k[y_1,\ldots,y_{e'}]$ respectively. Since $d \in S_1 \cap S_2$ and 
	\[d = \sum_{j=1}^e (m_j - m^{(j)})a_j = \sum_{j=1}^{e'} (m_j' - m'^{(j)})a_j', \] 
	we can define $\rho = \prod_{j=1}^e x_j^{(m_j-m^{(j)})} - \prod_{j=1}^{e'} y_j^{(m_j'-m'^{(j)})} \in I_S.$ Using \cite[Theorem 1.4]{rosales97}, it follows that the set $\sigma = \rho_1 \cup \rho_2 \cup \{\rho\}$ is a system of generators for $I_S$ in $k[x_1,\ldots,x_e,y_1,\ldots,y_{e'}].$ It is therefore sufficient to show that for each $\phi \in \sigma,$ there is an $h \in \PF(S)$ such that $\phi$ is an $\RF(h)$-relation.\\
	
	\textbf{Case-1}: Let $\phi \in \rho_1 \cup \rho_2.$ If $\phi \in \rho_1,$ then as $I_{S_1}$ is generated by $\RF$-relations, there exists an $f' \in \PF(S_1)$ such that $\phi$ is an $\RF(f')$-relation. Using Theorem \ref{PFGluing}, we get $f' + g + d \in \PF(S)$ and from Remark \ref{RFGluing}, it follows that the following matrix serves as an $\RF$-matrix for $f' + g + d,$
	\begin{align*}
	\RF(f' + g + d) =
	\left[
	\begin{array}{c|c} 
  		\RF(f') & B \\ 
  		\hline 
  		C & \RF(g) 
	\end{array} 
	\right],
	\end{align*}
	where each row of $B$ is $(m'_1,\ldots,m'_{e'})$ and each row of $C$ is  $(k_1,\ldots,k_e)$  such that $  \sum_{j=1}^{e}k_j a_j$ is equal to $f' + d $. Since each row of $B$ is equal, we get $\phi$ is an $\RF(f' + g + d)$-relation. Now let $\phi \in \rho_2.$ Since $I_{S_2}$ is generated by $\RF$-relations, there exists a $g' \in \PF(S_2)$ such that $\phi$ is an $\RF(g')$-relation and consequently, $\phi$ is an $\RF(f + g' + d)$-relation.\\


	\textbf{Case-2}: Let $\phi = \rho.$ Since $g \in \PF(S_2)$ and $f \in \PF(S_1),$ we have  $f + g + d \in \PF(S)$ and that the following matrix serves as an $\RF$-matrix for $f+g+d,$
	\begin{align*}
	\RF(f + g + d)=
	\left[
	\begin{array}{c|c} 
  		\RF(f) & B \\ 
  		\hline 
  		C & \RF(g) 
	\end{array} 
	\right],
	\end{align*}
	where each row of $B$ is $(m'_1,\ldots,m'_{e'})$ and each row of $C$ is $(m_1,\ldots,m_e).$ Using the assumption that there exists rows $(m^{(1)},\ldots,m^{(e)})$ and $(m'^{(1)},\ldots,m'^{(e')})$ of $\RF(f)$ and $\RF(g)$ respectively such that $(m_j - m^{(j)})$ and $ (m_j' - m'^{(j)})$ are non negative for all $j,$ it follows that $\phi = \rho$ is an $\RF(f+g+d)$-relation.
	
	Hence $I_S$ is generated by $\RF$-relations.
\end{proof}

\begin{Example}
	Let $S$ be the semigroup generated by
	\[ \{(0,9), (18, 0), (27, 0), (9, 18), (8, 8), (10, 10)\}. \] 
	Then $S$ is not a $\Cc$-semigroup and $S = S_1 +_d S_2$, where $d = (18,18)$, 
	\[ S_1 = \langle (0,9), (18, 0), (27, 0), (9, 18) \rangle \text{ and } S_2 = \langle (8, 8), (10, 10) \rangle.\]
We have ideals $I_{S_1} = (x_1^2x_3-x_2x_4, x_2^3-x_3^2, x_1^4x_2-x_4^2, x_1^2x_2^2-x_3x_4) \subseteq k[x_1,\ldots,x_4]$ and $I_{S_2}= (y_1^5-y_2^4) \subseteq k[y_1,y_2]$. Also, $\PF(S_1) = \{(9, 9)\}$ and $\PF(S_2)=\{(22, 22)\}$. Note that $I_{S_1}$ and $I_{S_2}$ are generated by $\RF$-relations. Set $f=(9,9)$ and $g=(22,22).$ Then $\RF(f)$ and $\RF(g)$ are 
	\begin{align*}
		\left[
		\begin{array}{rrrr}
			-1 & 0 & 0 & 1 \\
			1 & -1 & 1 & 0 \\
			1 & 2 & -1 & 0 \\
			3 & 1 & 0 & -1 
		\end{array}
		\right]
		\quad \text{ and } \quad 
		\left[
		\begin{array}{rr}
			-1 & 3 \\
			4 & -1 
		\end{array}
		\right]
		\text{ respectively.}
	\end{align*}   

Since $f+d = 3 \cdot (0,9) + 0 \cdot (18, 0) + 1 \cdot (27, 0) + 0 \cdot (9, 18)$ and $g + d = 0 \cdot (8, 8) + 4 \cdot (10, 10)$, we pick the rows $(m^{(1)}, m^{(2)}, m^{(3)}, m^{(4)}) = (1,-1,1,0)$ and $(m'^{(1)},m'^{(2)}) = (-1,3)$ of $\RF(f)$ and $\RF(g)$ respectively. Using Theorem {\rm \ref{GenRFRelations}}, it then follows that 
\[
I_S = (x_1^2x_3-x_2x_4, x_2^3-x_3^2, x_1^4x_2-x_4^2, x_1^2x_2^2-x_3x_4,y_1^5-y_2^4, x_1^2x_2-y_1y_2) \subseteq k[x_1,x_2,x_3,x_4,y_1,y_2],
\]
is generated by $\RF$-relations.
\end{Example}

\section{$\RF$-matrices and generic toric ideals}

Let $S = \langle a_1,\ldots,a_n \rangle \subseteq \mathbb{N}^d$ be an affine semigroup and $I_S \subset k[x_1,\ldots,x_n]$ be the defining ideal of the semigroup ring $k[S].$ For a given vector $a  = (a_1,\ldots,a_d) \in \mathbb{N}^d$, the support of $a$ is defined as
\[
\mathrm{supp}(a) = \{ i \mid i \in [1,d],\ a_i \neq 0\}.
\]
For a monomial ${\bf x}^u$, define $\mathrm{supp} ({\bf x}^u) =\mathrm{supp}(u)$ and for a binomial ${\bf x}^u - {\bf x}^v$, define $\mathrm{supp} ({\bf x}^u - {\bf x}^v) =\mathrm{supp}(u) \cup \mathrm{supp}(v)$. In \citep{peeva-sturmfels}, Peeva and Sturmfels defined that a toric ideal $I_S \subset k[x_1,\ldots,x_n]$ is called \textbf{generic} if it is minimally generated by the binomials of full support. A binomial ${\bf x}^u - {\bf x}^v$ is called indispensable if every system of binomial generators of $I_S$ contains ${\bf x}^u - {\bf x}^v$ or ${\bf x}^v - {\bf x}^u.$  Using \citep[Theorem 3.1]{thomaindispensableArxiv} it follows that if $I_S$ is generic toric ideal, then it has a unique minimal system of generators $B(I_S)$ of indispensable binomials up to the sign of binomials. 

If $x = \sum_{j=1}^n m_ja_j$ is the unique expression for $x$ in $S,$ then we say $x$ has unique factorization in $S.$ In other words, if $x = \sum_{j=1}^n m_ja_j = \sum_{j=1}^n m_j'a_j$ are two factorizations of $x$ in $S,$ then $m_j = m_j'$ for all $j \in [1,n].$ We denote the set of such elements by $\UF(S)$.
We define a partial order $\leq_S$ on $S$ as $ x \leq_S y$ if and only if $y - x \in S$. We denote the set of minimal elements of $S \setminus \UF(S)$ with respect to $\leq_S$ by $\min(S \setminus \UF(S))$.

\begin{Definition}\label{Aperyset}
	Let $S = \langle a_1,\ldots,a_n \rangle$ be an affine semigroup and $0 \neq x \in S.$ We define a subset $\Ap(S, x)$ of $S$ relative to $x$, as follows
	\begin{align*}
		\Ap(S, x) = \{ y \in S \mid y-x \in \Hc(S) \}.
	\end{align*}
\end{Definition}

\begin{Definition}\label{Support}
	Let $S = \langle a_1,\ldots,a_n \rangle$ be an affine semigroup and $0 \neq x \in S.$ Define $support$ of $x$ as
	\begin{align*}
		\Supp(x) = \{ j' \mid \text{ there is a factorization } \sum_{j=1}^n m_ja_j \text{ of } x \text{ such that } m_{j'} > 0 \}.
	\end{align*}
\end{Definition}

\begin{Proposition}\label{UniqueRFnotUF}
	Let $S = \langle a_1,\ldots,a_n \rangle$ be a $\MPD$-semigroup. If $ \bigcup_{j=1}^n \Ap(S,a_j) \subseteq \UF(S),$ then $\RF(f)$ is unique for all $f \in \PF(S)$.  
	Moreover, if $\Hc(S)$ is finite, then the converse is also true.
\end{Proposition}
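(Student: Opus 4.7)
The plan hinges on one structural observation that I would establish first: for any $y \in \Ap(S, a_j)$, every factorization $y = \sum_\ell k_\ell a_\ell$ in $S$ must satisfy $k_j = 0$. Indeed, if $k_j \geq 1$ then $y - a_j = \sum_\ell (k_\ell - \delta_{\ell j}) a_\ell$ would lie in $S$, contradicting $y - a_j \in \Hc(S) \subseteq \cone(S) \setminus S$. A direct consequence is that for every $f \in \PF(S)$ the element $f + a_j$ lies in $\Ap(S, a_j)$ — since $f + a_j \in S$ and $(f + a_j) - a_j = f \in \Hc(S)$ — so every factorization of $f + a_j$ in $S$ automatically omits $a_j$.

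Given this, the forward direction is short. The $j$-th row of any $\RF$-matrix of $f$ is by definition a factorization of $f + a_j$ whose $j$-th coordinate is zero. Under the hypothesis $\bigcup_j \Ap(S, a_j) \subseteq \UF(S)$, the element $f + a_j$ has a unique factorization in $S$, which pins down the $j$-th row; doing this for each $j \in [1,n]$ forces $\RF(f)$ to be unique.

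For the converse, assuming $\Hc(S)$ is finite, I would argue by contrapositive. Suppose $y \in \Ap(S, a_j) \setminus \UF(S)$. The observation produces two distinct factorizations $k, k' \in \NN^n$ of $y$ with $k_j = k'_j = 0$. Since $\Hc(S)$ is finite and $y - a_j \in \Hc(S)$, Corollary \ref{MaxPseudoFrob}(ii) yields $f \in \PF(S)$ with $s := f + a_j - y \in S$. Fixing any factorization $s = \sum_\ell t_\ell a_\ell$, the vectors $k + t$ and $k' + t$ give two distinct factorizations of $f + a_j$, so $f + a_j \notin \UF(S)$. Applying the observation once more to $f + a_j \in \Ap(S, a_j)$ forces both of these factorizations to have $j$-th coordinate zero, yielding two distinct choices for the $j$-th row of an $\RF$-matrix of $f$ — contradicting uniqueness.

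The main obstacle is spotting and proving the structural observation about factorizations of elements of $\Ap(S, a_j)$; once that lemma-like fact is in hand, both implications are short combinatorial manipulations. The finiteness of $\Hc(S)$ is used only in the converse, to invoke Corollary \ref{MaxPseudoFrob}(ii) and transport the non-uniqueness at $y$ to non-uniqueness at $f + a_j$ for an actual pseudo-Frobenius element $f$.
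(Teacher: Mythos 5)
Your proof is correct and follows essentially the same route as the paper's: both directions rest on the observations that $f+a_j\in\Ap(S,a_j)$ for $f\in\PF(S)$ (so the $j$-th row of $\RF(f)$ is exactly a factorization of $f+a_j$, all of which necessarily omit $a_j$), and that Corollary \ref{MaxPseudoFrob}(ii) lets one write $f+a_j = y+s$ and transport (non)uniqueness of factorizations between $y$ and $f+a_j$. The only cosmetic difference is that you phrase the converse as a contrapositive while the paper argues it directly.
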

\begin{proof}
	 Let $\bigcup_{j=1}^n \Ap(S,a_j) \subseteq \UF(S).$ If $f \in \PF(S),$ then $f+a_j-a_j \in \Hc(S).$ Therefore, $f+a_j \in \Ap(S, a_j)$ for all $j \in [1,n].$ Hence, $f+a_j$ has unique factorization in $S$ for all $j \in [1,n]$ and $\RF(f)$ is unique. \\
	 Conversely, assume that $\Hc(S)$ is finite and $\RF(f)$ is unique for all $f \in \PF(S).$ Let $x \in \bigcup_{j=1}^n \Ap(S,a_j).$ Then there exists $j \in [1,n]$ such that $x \in \Ap(S, a_j).$ Therefore, $x - a_j \in \Hc(S).$ By Corollary \ref{MaxPseudoFrob}, there exists $f \in \PF(S)$ such that $ f+a_j - x \in S.$  Since $\RF(f)$ is unique, $f+a_j$ has unique factorization in $S.$ Hence $x$ has unique factorization in $S.$
\end{proof}

\begin{Proposition}\label{RFrelationimplyMinGen}
	Let $S$ be a $\MPD$-semigroup and $I_S$ be generic toric ideal. Suppose that $\phi \in I_S$ is an $\RF(f)$-relation for some $f \in \PF(S).$ Then $\phi \in B(I_S)$.
\end{Proposition}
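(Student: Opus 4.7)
The strategy is to show that the $\RF(f)$-relation $\phi$ is indispensable in $I_S$; since $I_S$ is generic, the minimal generators in $B(I_S)$ are exactly the indispensable binomials (up to sign), and so indispensability of $\phi$ will yield $\phi \in B(I_S)$. By Theorem~\ref{RFBinomial}, $\phi = \phi_{ij} = {\bf x}^{m_{(ij)}^+} - {\bf x}^{m_{(ij)}^-}$, where $m_{(ij)}^+$ and $m_{(ij)}^-$ are the positive and negative parts of $m_i - m_j$; hence the two monomials have disjoint supports, $x_j$ divides only the first while $x_i$ divides only the second, and in particular $\gcd({\bf x}^u,{\bf x}^v)=1$ when we set $\phi = {\bf x}^u - {\bf x}^v$.

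The plan is to argue by contradiction. I would first invoke the standard characterization of indispensability (from \cite{thomaindispensableArxiv}): $\phi$ is indispensable if and only if no proper sub-binomial $\psi = {\bf x}^{u'} - {\bf x}^{v'} \in I_S$ exists with $u' \leq u$, $v' \leq v$ coordinate-wise, $(u',v') \notin \{(0,0),(u,v)\}$, and $\gcd({\bf x}^{u'},{\bf x}^{v'})=1$. Assume such $\psi$ exists. Since $\psi \in I_S$ and $B(I_S)$ generates $I_S$, there is a minimal generator $\eta = {\bf x}^{a^+} - {\bf x}^{a^-} \in B(I_S)$ whose positive monomial divides ${\bf x}^{u'}$, hence divides ${\bf x}^u$. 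Genericity forces $\eta$ to have full support; combined with $\supp(a^+) \subseteq \supp(u)$ and $i \notin \supp(u)$, this forces $x_i \mid {\bf x}^{a^-}$. The reduction $\phi - {\bf x}^{u-a^+}\eta = {\bf x}^{u-a^+ + a^-} - {\bf x}^v \in I_S$ then has both its monomials divisible by $x_i$, so by primality of $I_S$ and $x_i \notin I_S$ one obtains a strictly smaller binomial $\phi'' \in I_S$ of $S$-degree $\deg \phi - a_i$.

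The main obstacle is converting the existence of $\phi''$ into a concrete contradiction. Using Theorem~\ref{RFBinomial}'s bound $\deg \phi \leq f + a_i + a_j$ together with the row relation $\sum_k m_{jk}a_k = f$ from the $j$-th row of $\RF(f)$, I would express $\deg \phi''$ as closely tied to $f + a_j$, and then iterate the reduction (symmetrically applying the argument using the $i$-th row of $\RF(f)$ and carefully preserving the support constraints forced by genericity). This should eventually produce a factorization of the pseudo-Frobenius element $f$ itself in $S$ --- contradicting $f \notin S$. The approach parallels Eto's strategy for numerical semigroups in \cite{etoGeneric}; the delicate step is controlling the interaction between the full-support constraints forced by genericity and the combinatorial arithmetic of the row-factorization matrix throughout the iteration, ensuring that the cancellations do not trivialize before producing the desired contradiction.
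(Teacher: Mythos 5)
Your plan has a genuine gap, and it occurs at the very first step. The criterion you invoke --- ``$\phi$ is indispensable if and only if there is no proper sub-binomial ${\bf x}^{u'}-{\bf x}^{v'}\in I_S$ with $u'\leq u$, $v'\leq v$ componentwise'' --- is not a characterization of indispensability; it is the standard characterization of \emph{primitivity}, i.e.\ membership in the Graver basis. The implication you actually need (no proper sub-binomial $\Rightarrow$ indispensable) is false, even for generic ideals: for $S=\langle 3,4,5\rangle$ the ideal $I_S=(x_1^3-x_2x_3,\,x_2^2-x_1x_3,\,x_3^2-x_1^2x_2)$ is generic, and $x_1^4-x_2^3$ admits no proper sub-binomial in the above sense, yet it is not a minimal generator (its $S$-degree $12$ exceeds the generator degree $9$ by $3\in S$), hence not indispensable. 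So even if your reduction-and-iteration argument were carried to completion, it would only establish that $\phi$ lies in the Graver basis, not that $\phi\in B(I_S)$. Separately, the iteration itself is left as a sketch with an acknowledged unresolved difficulty, and it is not clear it can terminate at a factorization of $f$: since $\deg\phi$ can be strictly smaller than $f+a_i+a_j$ (see the paper's own Example \ref{RFBinomialexample}, where $\deg\phi_{45}=(10,16)<f+a_4+a_5$), stripping off $a_i$ and $a_j$ need not land on $f$ at all.

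The paper's proof runs along a different axis: it works with the $S$-degree $d$ of $\phi$ rather than with sub-binomials. Since the two rows of $\RF(f)$ give two distinct factorizations, $d\in S\setminus\UF(S)$; for a generic ideal the degrees of the (indispensable) minimal generators are exactly the elements of $\min_{\leq_S}(S\setminus\UF(S))$, and each such degree supports precisely two factorizations, so it suffices to show $d$ is minimal. If not, $d=d'+s$ with $d'\in\min(S\setminus\UF(S))$ and $0\neq s\in S$, and the full-support generator in degree $d'$ connects the fiber of $d$, contradicting the arguments of \cite[Theorem 3.1]{thomaindispensableArxiv}. If you want to salvage your approach, you would need to replace the primitivity criterion with a genuine indispensability criterion (e.g.\ in terms of the fiber of $\deg\phi$ and its minimality in $S\setminus\UF(S)$), at which point you are essentially reconstructing the paper's argument.
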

\begin{proof}
	Let $m_i$ and $m_j$ be the rows of $\RF(f)$ for some $f \in \PF(S)$ such that $\phi = \phi_{(ij)} = {\bf{x}}^{m_{(ij)}^+} - {\bf{x}}^{m_{(ij)}^-}$. Let $d$ be the $S$-degree of ${\bf{x}}^{m_{(ij)}^+}$ and ${\bf{x}}^{m_{(ij)}^-}$. Therefore, we have $d \in S \setminus \UF(S)$. To prove $\phi \in B(I_S)$, it is sufficient to prove that $ d \in \min(S \setminus \UF(S))$. Suppose $ d \notin  \min(S \setminus \UF(S)).$ Then there exist $d' \in \min(S \setminus \UF(S))$ such that $d - d' \in S$. Write $d = d' + s$ for some $s \in S$. Now using the similar arguments as in the proof of \cite[Theorem 3.1]{thomaindispensableArxiv}\footnote{Another proof of this theorem can be found in \cite[Remark 4.4(3)]{peeva-sturmfels}. We cite Theorem 3.1 of reference \citep{thomaindispensableArxiv} because we have adopted the techniques of this proof in our article.}, we get a contradiction.
\end{proof}

\begin{Lemma}\label{minnotUF}
	If for each $x \in \min(S \setminus \UF(S))$, $\mathrm{Supp}(x) = [1,n]$ then $S \setminus \UF(S) \subseteq S \setminus \bigcup_{j=1}^n \Ap(S, a_j)$.
\end{Lemma}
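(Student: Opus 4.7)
The plan is to take an arbitrary $y \in S \setminus \UF(S)$ and show directly that $y \notin \Ap(S, a_j)$ for every $j \in [1,n]$. Unwinding Definition \ref{Aperyset}, this amounts to showing that for each $j$, $y - a_j \in S$ (which forces $y - a_j \notin \Hc(S)$). The key observation driving the argument is that the support hypothesis on minimal non-unique-factorization elements will let me "pull off" any generator $a_j$ while remaining inside $S$.

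First I would reduce to a minimal element. Given $y \in S \setminus \UF(S)$, I want to produce some $x \in \min(S \setminus \UF(S))$ with $y - x \in S$. This is where I need to check that $\leq_S$ admits no infinite descending chains on $S$: if $z_1 >_S z_2 >_S \cdots$ with $z_i \in \NN^d$, then the coordinate sums $|z_i|$ form a strictly decreasing sequence of non-negative integers, which must terminate. Hence the set $\{z \in S \setminus \UF(S) : z \leq_S y\}$ is non-empty and has a $\leq_S$-minimal element $x$, and by construction $x \in \min(S \setminus \UF(S))$. Write $y = x + s$ with $s \in S$.

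Now invoke the hypothesis: $\Supp(x) = [1,n]$. By Definition \ref{Support}, for each $j \in [1,n]$ there exists a factorization $x = \sum_{i=1}^n m_i a_i$ with $m_j \geq 1$. Subtracting $a_j$ yields
\[
x - a_j = (m_j - 1) a_j + \sum_{i \neq j} m_i a_i \in S,
\]
so $y - a_j = (x - a_j) + s \in S$ as well. Therefore $y - a_j \notin \Hc(S) = (\cone(S) \setminus S) \cap \NN^d$, and consequently $y \notin \Ap(S, a_j)$. Since $j$ was arbitrary, $y \notin \bigcup_{j=1}^n \Ap(S, a_j)$, which gives $y \in S \setminus \bigcup_{j=1}^n \Ap(S, a_j)$ as required.

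The only non-routine point is the well-foundedness step that guarantees the existence of a minimal element below $y$; everything else is a direct unwinding of definitions. I do not foresee any real obstacle, since the argument uses $\Supp(x) = [1,n]$ in exactly the way the lemma suggests, namely to extract one copy of every generator from a minimal bad element.
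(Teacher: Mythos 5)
Your proof is correct and follows essentially the same route as the paper's: reduce to a minimal element $x \in \min(S \setminus \UF(S))$ with $x \leq_S y$, use $\Supp(x) = [1,n]$ to subtract each generator $a_j$ from $x$ while staying in $S$, and conclude $y - a_j \in S$, hence $y \notin \Ap(S,a_j)$. The only difference is that you explicitly justify the existence of the minimal element via well-foundedness of $\leq_S$ (strictly decreasing coordinate sums), a point the paper's proof takes for granted.
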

\begin{proof}
	Let $y \in S \setminus \UF(S).$ Then there exists $x \in \min(S \setminus \UF(S))$ such that $x \leq_S y$. Therefore, $y = x + z$ for some $z \in S.$ We get, $y - a_j = x - a_j + z$ for all $j \in [1,n]$. Since $\mathrm{Supp}(x) = [1,n]$, $y - a_j \in S$ for all $j.$ Hence, $y \notin \Ap(S, a_j)$ for all $j$.
\end{proof}

\begin{Theorem}\label{genericRFunique}
	Let $S$ be a $\MPD$-semigroup. If $I_S$ is generic, then $\RF(f) = (m_{ij})$ is unique for each $f \in \PF(S)$ and $m_{ij} \neq m_{i'j}$ for all $i \neq i'$.
\end{Theorem}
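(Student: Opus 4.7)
The plan is to combine the propositions of Section~6 into a two-step argument: first establish the uniqueness of $\RF(f)$ via the $\UF(S)$-criterion of Proposition \ref{UniqueRFnotUF}, and then exploit the indispensability of $\RF$-relations from Proposition \ref{RFrelationimplyMinGen} to force the column-distinctness.

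For the uniqueness assertion, I would first show that every $x \in \min(S \setminus \UF(S))$ satisfies $\Supp(x) = [1,n]$. Given such an $x$, there exist at least two factorizations, and hence a non-zero binomial $\phi \in I_S$ of $S$-degree $x$. Expanding $\phi$ in any minimal binomial generating set as $\phi = \sum_k c_k h_k \phi_k$ and comparing $S$-degrees, each summand may be assumed of $S$-degree $x$, so $\deg_S(\phi_k) \leq_S x$. Since the $S$-degree of any non-zero binomial of $I_S$ lies in $S \setminus \UF(S)$, the minimality of $x$ forces $\deg_S(\phi_k) = x$. Thus some minimal generator $\phi_k$ of $I_S$ has $S$-degree exactly $x$; by the genericity of $I_S$ it has full support, and because its two monomials are both factorizations of $x$, every index lies in $\Supp(x)$. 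Lemma \ref{minnotUF} then yields $\bigcup_{j=1}^n \Ap(S,a_j) \subseteq \UF(S)$, and Proposition \ref{UniqueRFnotUF} delivers the uniqueness of $\RF(f)$ for every $f \in \PF(S)$.

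For the column-distinctness, suppose $m_{ij} = m_{i'j}$ with $i \neq i'$. If $j \in \{i, i'\}$ then one of these entries is $-1$ (a diagonal entry) while the other is a non-negative integer (off-diagonal), which is absurd. Hence $i, i' \neq j$; assume $i < i'$. Consider the $\RF(f)$-relation $\phi_{ii'}$ from Theorem \ref{RFBinomial}. Note that $\phi_{ii'}$ is non-zero, since the $i$-th coordinate of $m_i - m_{i'}$ is $-1 - m_{i'i} \leq -1$ while the $i'$-th coordinate is $m_{ii'} + 1 \geq 1$. But the $j$-th coordinate of $m_i - m_{i'}$ vanishes, so $x_j$ does not appear in $\phi_{ii'}$. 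By Proposition \ref{RFrelationimplyMinGen}, $\phi_{ii'} \in B(I_S)$, and since $I_S$ is generic, $\phi_{ii'}$ must have full support, contradicting the absence of $x_j$.

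The main technical hurdle is the claim in the uniqueness step that $x \in \min(S \setminus \UF(S))$ forces some minimal generator of $I_S$ to have $S$-degree exactly $x$. This hinges on the facts that the $S$-degree of any non-zero binomial in $I_S$ lies in $S \setminus \UF(S)$, and that a generator of strictly smaller $S$-degree would have its degree in $S \setminus \UF(S)$, contradicting the assumed minimality of $x$; both are standard but need to be invoked with care. Once this is in hand, the rest of the proof is a straightforward chaining of Lemma \ref{minnotUF}, Proposition \ref{UniqueRFnotUF}, and Proposition \ref{RFrelationimplyMinGen}.
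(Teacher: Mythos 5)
Your proposal is correct and follows essentially the same route as the paper: establish that every element of $\min(S\setminus\UF(S))$ has full support (via a full-support minimal generator in that degree), then chain Lemma \ref{minnotUF} and Proposition \ref{UniqueRFnotUF} for uniqueness, and use Proposition \ref{RFrelationimplyMinGen} plus genericity for the column-distinctness. Your expansion-in-minimal-generators argument for the key sub-claim is a slightly more direct substitute for the paper's appeal to the indispensability characterization, but the architecture is the same.
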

\begin{proof}
	Since $I_S$ is generic, by \cite[Theorem 3.1]{thomaindispensableArxiv} $I_S$ has a unique set of minimal generators, generated by indispensable binomials. We claim that $s \in S$ is equal to degree of $\phi$ for some indispensable binomial $\phi$ in the minimal generating set of $I_S$ if and only if $s \in \min(S \setminus \UF(S))$. If $s \in S$ is equal to degree of $\phi$ for some indispensable binomial $\phi$ in the minimal generating set of $I_S$, then $s$ does not have a unique factorization and using the arguments as in the proof of \cite[Theorem 3.1]{thomaindispensableArxiv}, we get $s \in \min(S \setminus \UF(S))$. Conversely, let $s \in \min(S \setminus \UF(S)).$ Suppose that $s$ has more than two different factorizations. Let ${\bf x}^u$, ${\bf x}^v$ and ${\bf x}^w$ denote the monomials corresponding to the different factorizations of $s.$ Then $\deg {\bf x}^u = \deg {\bf x}^v = \deg {\bf x}^w = s.$ Using minimality of $s$, without loss of generality, we may assume that ${\bf x}^u - {\bf x}^v$ is in the minimal generating set of $I_S.$ Since $I_S$ is generic, it follows that if ${\bf x}^u = \prod_{i \in \Lambda} x_i^{\alpha_i}$ for some $\Lambda \subsetneq [1,n]$ and $\alpha_i \in \mathbb{N}$, then ${\bf x}^v = \prod_{i \in [1,n] \setminus \Lambda} x_i^{\alpha_i}.$ Using \cite[Proposition 2.4]{thomaindispensable}, we get $\gcd({\bf x}^u,{\bf x}^w)=1$ and $\gcd({\bf x}^v, {\bf x}^w)=1.$ Note that $\gcd({\bf x}^u,{\bf x}^w)=1$ implies that there exists $j \in [1,n] \setminus \Lambda$ such that $x_j \mid {\bf x}^w$ which contradicts that $\gcd({\bf x}^v,{\bf x}^w)=1.$ Thus, $s$ has precisely two different factorizations. Using the minimality of $s$, the claim holds. Now, let $s \in \min(S \setminus \UF(S))$ and since $I_S$ is generic, we have $\mathrm{Supp}(s) = [1,n]$. By Lemma \ref{minnotUF}, we have $S \setminus \UF(S) \subseteq S \setminus \bigcup_{j=1}^n \Ap(S, a_j).$ Hence, by Proposition \ref{UniqueRFnotUF}, we have $\RF(f) = (m_{ij})$ is unique for each $f \in \PF(S)$. Now, let $f \in \PF(S)$ and $m_i ,m_{i'}$ be two different rows of $\RF(f)$. Set $ m _{(ii')} = m_i - m_{i'}.$ Then by Proposition \ref{RFrelationimplyMinGen}, we get $ \phi_{(ii')} = {\bf{x}}^{m_{(ii')}^+} - {\bf{x}}^{m_{(ii')}^-} \in B(I_S)$. Therefore $\phi_{(ii')}$ is a binomial of full support and hence $m_{ij} \neq m_{i'j}$ for all $i \neq i'$.
\end{proof}

\begin{Example}
	Let $S = \langle (20,0), (24,1), (1,25), (0,31) \rangle.$ Then
	\begin{align*}
		\PF(S) =
		\left\lbrace
		\begin{array}{c}
			(223,4445), (271,3145), (319,1845), (559,1256), (799,667), \\ 
			(1375,567), (1951,467), (2527,367), (3103,267)
		\end{array}
		\right\rbrace	 
	\end{align*} 
	and $I_S$ is generated by 
\begin{align*}
	\left\lbrace
	\begin{array}{c}
	x_2^{24}x_3^4-x_1^{29}x_4^4, \ x_1^{12}x_3^{24}-x_2^{11}x_4^{19}, \ x_2^{13}x_3^{28}-x_1^{17}x_4^{23}, \ x_1^{41}x_3^{20}-x_2^{35}x_4^{15}, \ x_2^2x_3^{52}-x_1^5x_4^{42}, \ x_1^{70}x_3^{16}-x_2^{59}x_4^{11}, \\[2mm] x_1^7x_3^{76}-x_2^9x_4^{61}, \ x_1^{99}x_3^{12}-x_2^{83}x_4^7, \ x_1^{128}x_3^8-x_2^{107}x_4^3, \ x_2^{131}-x_1^{157}x_3^4x_4, \ x_1^2x_3^{128}-x_2^7x_4^{103}, \ x_3^{180}-x_1^3x_2^5x_4^{145}
	\end{array}
	\right\rbrace.
\end{align*}
Hence, $I_S$ is generic and $\RF$-matrices for the elements of $\PF(S)$ are
\begin{align*}
		\begin{bmatrix}
    		-1 &  8 & 51 & 102     \\
		    6  &  -1 & 127 & 41   \\
		    4 & 6 & -1 & 144\\
		    1 & 1 & 179 & -1
		\end{bmatrix},
		\begin{bmatrix}
		    -1  &  10 & 51 & 60      \\
		    11  &  -1   & 75 & 41  \\
		    4 & 8 & -1  & 102 \\
		    6 & 1 & 127 & -1
		\end{bmatrix},
		 \begin{bmatrix}
    		-1 &  12 & 51 & 18     \\
		    16  &  -1 & 23 & 41   \\
		    4 & 10 & -1 & 60\\
		    11 & 1 & 75 & -1
		\end{bmatrix},
		\begin{bmatrix}
    		-1 &  23 & 27 & 18     \\
		    28  &  -1 & 23 & 22   \\
		    16 & 10 & -1 & 41\\
		    11 & 12 & 51 & -1
		\end{bmatrix},
		\begin{bmatrix}
    		-1 &  34 & 3 & 18     \\
		    40  &  -1 & 23 & 3  \\
		    28 & 10 & -1 & 22\\
		    11 & 23 & 27 & -1
		\end{bmatrix},
	\end{align*} 
	
	\begin{align*}
		\begin{bmatrix}
    		-1 &  58 & 3 & 14     \\
		    69  &  -1 & 19 & 3   \\
		    28 & 34 & -1 & 18\\
		    40 & 23 & 23 & -1
		\end{bmatrix},
		\begin{bmatrix}
		    -1  &  82 & 3 & 10      \\
		    98  &  -1   & 15 & 3  \\
		    28 & 58 & -1  & 14 \\
		    69 & 23 & 19 & -1
		\end{bmatrix},
		 \begin{bmatrix}
    		-1 &  106 & 3 & 6     \\
		    127  &  -1 & 11 & 3   \\
		    28 & 82 & -1 & 10\\
		    98 & 23 & 15 & -1
		\end{bmatrix}
		\text{ and }
		\begin{bmatrix}
    		-1 &  130 & 3 & 2     \\
		    156  &  -1 & 7 & 3   \\
		    28 & 106 & -1 & 6\\
		    127 & 23 & 11 & -1
		\end{bmatrix} \text{respectively}.
	\end{align*} 
Moreover, these matrices are unique and no two entries in a column of each matrix are same.	
\end{Example}

\begin{Corollary}\label{GluingNotGeneric}
	Let $n \geq 3$ and $S = \langle a_1,\ldots,a_n \rangle$ be a gluing of $\MPD$-semigroups. Then $I_S$ is not generic.
\end{Corollary}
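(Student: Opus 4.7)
The plan is to argue by contradiction using Theorem \ref{genericRFunique}, which says that if $I_S$ is generic then for every $h \in \PF(S)$ the matrix $\RF(h)$ is unique and its entries within any single column are pairwise distinct. I will exhibit an $h \in \PF(S)$ whose (forced) $\RF$-matrix violates this distinctness.

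Write $S = S_1 +_d S_2$ with $S_1 = \langle a_1, \ldots, a_e \rangle$ and $S_2 = \langle a_{e+1}, \ldots, a_n \rangle$, both $\MPD$-semigroups. First I would choose $f \in \PF(S_1)$ and $g \in \PF(S_2)$ (both nonempty since each piece is $\MPD$) and set $h = f + g + d$; by Theorem \ref{PFGluing} this lies in $\PF(S)$. Next I would invoke Remark \ref{RFGluing} to exhibit the block-form $\RF$-matrix
\[
\RF(h) = \left[\begin{array}{c|c} \RF(f) & B \\ \hline C & \RF(g) \end{array}\right],
\]
where every row of $B$ equals the coordinate vector of a fixed factorization of $g + d$ in $S_2$, and every row of $C$ equals that of a fixed factorization of $f + d$ in $S_1$.

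Since $n \geq 3$ and $e, n-e \geq 1$, at least one of $e \geq 2$ or $n-e \geq 2$ holds. After possibly interchanging the roles of $S_1$ and $S_2$, I may assume $e \geq 2$. Then $B$ contains two or more identical rows, so in any column index $j$ with $e+1 \leq j \leq n$ the entries in rows $1$ and $2$ of $\RF(h)$ coincide, directly contradicting the column-distinctness conclusion of Theorem \ref{genericRFunique}. This contradiction shows $I_S$ cannot be generic.

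The only point one must handle carefully is that the genericity hypothesis forces the block matrix of Remark \ref{RFGluing} to be \emph{the} $\RF(h)$, not merely one of possibly many candidates; this is exactly what the uniqueness half of Theorem \ref{genericRFunique} provides, so the repeated rows inside $B$ cannot be circumvented by a different choice of factorizations of $f+d$ or $g+d$. Beyond this, the argument is a direct reading of the block structure, and there is no substantive technical obstacle.
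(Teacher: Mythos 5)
Your argument is correct and is exactly the paper's proof: the paper's entire justification for this corollary is ``Follows from Theorem \ref{PFGluing}, Remark \ref{RFGluing} and Theorem \ref{genericRFunique},'' and your write-up simply fills in the details of that citation chain, including the correct observation that uniqueness of the $\RF$-matrix forces the block matrix of Remark \ref{RFGluing} to be the one violating column-distinctness. No gaps.
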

\begin{proof}
	Follows from Theorem \ref{PFGluing}, Remark \ref{RFGluing} and Theorem \ref{genericRFunique}.
\end{proof}

\begin{Theorem}
	Let $n \geq 4$ and $S = \langle a_1, \ldots , a_n \rangle$ be a $\prec$-almost symmetric $\MPD$-semigroup. Then $I_S$ is not generic.
\end{Theorem}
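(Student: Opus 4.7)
\emph{Plan.} I would assume $I_S$ is generic and aim for a contradiction via Theorem~\ref{genericRFunique}, which guarantees that for every $h \in \PF(S)$ the matrix $\RF(h)$ is unique and has pairwise distinct entries in each column. Using $\prec$-almost symmetry, fix $f \in \PF'_\prec(S)$ and set $g = \F(S)_\prec - f \in \PF'_\prec(S)$; write $\RF(f) = (m_{ij})$, $\RF(g) = (n_{ij})$, $\RF(\F(S)_\prec) = (c_{ij})$.

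First I would obtain a sparsity constraint: since $\F(S)_\prec \notin S$, any factorization of $\F(S)_\prec + a_i + a_j$ in $S$ uses at most one of $a_i, a_j$ (else $\F(S)_\prec$ itself would lie in $S$). Applied to the factorization $(f+a_i)+(g+a_j)$, whose $a_i$- and $a_j$-coefficients are $n_{ji}$ and $m_{ij}$, this forces $n_{ji}\, m_{ij}=0$ for all $i \neq j$. Because column $i$ of $\RF(g)$ has distinct entries, at most one off-diagonal entry vanishes, so for at least $n-2$ indices $j$ one has $n_{ji} > 0$ and hence $m_{ij}=0$. Therefore row $i$ of $\RF(f)$ has exactly one nonzero off-diagonal entry, and
\[
  f + a_i = k_i \, a_{\sigma(i)} \qquad (k_i \geq 1,\ \sigma(i) \neq i)
\]
for some $\sigma\colon [n]\to[n]$. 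If $\sigma(i)=\sigma(i')$ with $i\neq i'$, then $a_i-a_{i'}$ is a non-negative multiple of $a_{\sigma(i)}$, violating minimal generation; so $\sigma$ is a derangement. Symmetrically, $g + a_i = l_i \, a_{\tau(i)}$ for a derangement $\tau$.

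Next I would compute $\F(S)_\prec + a_i$ two ways. Writing $(f+a_i)+g = k_i a_{\sigma(i)} + (g + a_{\sigma(i)}) - a_{\sigma(i)}$ and using $g+a_{\sigma(i)} = l_{\sigma(i)} a_{\tau(\sigma(i))}$ gives
\[
  \F(S)_\prec + a_i = (k_i-1)\, a_{\sigma(i)} + l_{\sigma(i)}\, a_{\tau(\sigma(i))},
\]
and symmetrically $\F(S)_\prec + a_i = (l_i-1)\, a_{\tau(i)} + k_{\tau(i)}\, a_{\sigma(\tau(i))}$. A brief check (if $\tau(\sigma(i)) = i$ then $\F(S)_\prec \in S$) shows neither factorization uses $a_i$, so by uniqueness of $\RF(\F(S)_\prec)$ they coincide. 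Matching the two-element supports $\{\sigma(i), \tau(\sigma(i))\} = \{\tau(i), \sigma(\tau(i))\}$, the alternative $\sigma(i) = \sigma(\tau(i))$ contradicts injectivity of $\sigma$, leaving $\sigma(i)=\tau(i)$; matching coefficients then yields $k_i = l_i$. Hence $\sigma = \tau$ and $k = l$, so $f+a_i = g+a_i$ for every $i$, forcing $f = g = \F(S)_\prec/2$; in particular $S$ is $\prec$-pseudo-symmetric and $\sigma$ has no 2-cycles (else $\tau(\sigma(i))=i$, already excluded).

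Consequently row $i$ of $\RF(\F(S)_\prec)$ equals $(k_i-1) a_{\sigma(i)} + k_{\sigma(i)} a_{\sigma^2(i)}$, so in column $j$ only the two rows $i = \sigma^{-1}(j)$ and $i = \sigma^{-2}(j)$ contribute nonzero off-diagonal entries, leaving $n-3$ off-diagonal zeros. For $n \geq 5$ this contradicts the distinctness of column entries in $\RF(\F(S)_\prec)$. For $n = 4$ the derangement $\sigma$ is a single $4$-cycle, and iterating $f+a_i = k_i a_{\sigma(i)}$ around the cycle writes every $a_i$ as a positive rational multiple of a single vector, giving $\rank G(S) = 1$. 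Then $S$ is isomorphic (after rescaling) to an almost symmetric numerical semigroup of embedding dimension $4$ with generic defining ideal, contradicting the numerical case of Eto~\cite{etoGeneric}. The main obstacle I anticipate is the middle paragraph: verifying the auxiliary index conditions $\tau(\sigma(i)),\sigma(\tau(i))\neq i$ and carrying out the combinatorial matching of supports and coefficients to force $\sigma=\tau$ and $k_i=l_i$.
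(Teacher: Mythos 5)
Your first paragraph is correct and is essentially the paper's own first step: both arguments use Theorem \ref{genericRFunique} to show that each row of $\RF(f)$ has at most one nonzero off-diagonal entry (the paper gets this by producing two distinct factorizations of $\F(S)_{\prec}+a_i$ from two nonzero entries in a row of $\RF(f)$; you get it from the vanishing products $n_{ji}m_{ij}=0$ together with the distinctness of the entries in a column of $\RF(g)$ --- both routes are valid). But at that point you are already done, and you miss the two-line finish the paper uses: each of the $n$ rows of $\RF(f)$ contains at least $n-2$ off-diagonal zeros, so the columns of $\RF(f)$ together contain at least $n(n-2)>n$ off-diagonal zeros when $n\geq 4$, hence some single column of $\RF(f)$ contains two equal (zero) off-diagonal entries, contradicting Theorem \ref{genericRFunique} directly. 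There is no need to pass to $\RF(\F(S)_{\prec})$ at all.

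The continuation you propose instead has genuine problems. First, the support-matching step that forces $\sigma=\tau$ and $k_i=l_i$ breaks down when $k_i=1$ or $l_i=1$: the coefficient vectors $(k_i-1)e_{\sigma(i)}+l_{\sigma(i)}e_{\tau(\sigma(i))}$ and $(l_i-1)e_{\tau(i)}+k_{\tau(i)}e_{\sigma(\tau(i))}$ then have singleton supports, their equality only yields $\tau(\sigma(i))=\sigma(\tau(i))$ and $l_{\sigma(i)}=k_{\tau(i)}$, and $\sigma(i)=\tau(i)$ no longer follows; everything downstream ($f=g=\F(S)_{\prec}/2$, the structure of the rows of $\RF(\F(S)_{\prec})$, the $4$-cycle analysis) depends on this. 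Second, even granting that, your final contradiction only covers $n\geq 5$, and the $n=4$ case is outsourced to ``the numerical case of Eto,'' i.e., to the very statement being proved restricted to rank-one semigroups, without verifying that such a result is available in the cited form. So as written the proposal is incomplete; replacing everything after your first paragraph by the pigeonhole count on the columns of $\RF(f)$ repairs it and recovers the paper's proof.
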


\begin{proof}
	Suppose that $I_S$ is generic. Since $S$ is $\prec$-almost symmetric, there exist $f, f' \in \PF(S)$ such that $f+f' = \F(S)_{\substack{\\\prec}}.$ Let $f+a_i = \sum_{j=1,j\neq i}^n m_ja_j$ be a factorization of $f+a_i$ in $S$. Suppose there exist $k \neq k'$ such that $m_k, m_{k'} \neq 0$ in the factorization of $f+a_i$. Since $f' \in \PF(S)$, we have $k \notin \mathrm{Supp}(f'+a_k)$ and $k' \notin \mathrm{Supp}(f' + a_{k'})$. Let $f' + a_k = \sum_{j=1,j \neq k}^n m'_ja_j$ and $f'+a_{k'} = \sum_{j=1,j \neq k'}^n m''_ja_j$. Since $\F(S)_{\substack{\\\prec}}+a_i = f+a_i-a_k+f'+a_k = f+a_i-a_{k'}+f'+a_{k'}$, we have $\F(S)_{\substack{\\\prec}}+a_i \notin \UF(S)$ and hence $\RF(\F(S)_{\substack{\\\prec}})$ is not unique. Which is a contradiction to Theorem \ref{genericRFunique}. Therefore, for each $i$, $f+a_i = m_ja_j$ for some $j \neq i$ and hence each row of $\RF(f) = (m_{ij})$ has at least $n-2$ zeros. Since $n \geq 4$, there exist $i \neq i'$ such that $m_{ij} = m_{i'j}$ for some $j$. Which is again a contradiction to the Theorem \ref{genericRFunique}. Hence, $I_S$ is not generic.
\end{proof}

\textbf{ACKNOWLEDGEMENT.} Experiments with the computer algebra softwares Macaulay2 \cite{M2} and GAP \cite{GAP4} have provided numerous valuable insights.

\bibliographystyle{plain}

\begin{thebibliography}{10}
	
	\bibitem{AssiA}
	A.~Assi, P.~A. Garc\'{\i}a-S\'{a}nchez, and I.~Ojeda.
	\newblock Frobenius vectors, {H}ilbert series and gluings of affine semigroups.
	\newblock {\em J. Commut. Algebra}, 7(3):317--335, 2015.
	
	\bibitem{op}
	Om Prakash Bhardwaj, Kriti Goel, and Indranath Sengupta.
	\newblock Affine semigroups of maximal projective dimension.
	\newblock {\em S\'{e}m. Lothar. Combin.}, 86B(33):10pp., 2022.
	
	\bibitem{bresinsky1}
	H.~Bresinsky.
	\newblock On prime ideals with generic zero {$x_{i}=t^{n_{i}}$}.
	\newblock {\em Proc. Amer. Math. Soc.}, 47:329--332, 1975.
	
	\bibitem{bresinsky2}
	Henrik Bresinsky and L\^{e}~Tu\^{a}n Hoa.
	\newblock Minimal generating sets for a family of monomial curves in {${\bf
			A}^4$}.
	\newblock In {\em Commutative algebra and algebraic geometry ({F}errara)},
	volume 206 of {\em Lecture Notes in Pure and Appl. Math.}, pages 5--14.
	Dekker, New York, 1999.
	
	\bibitem{cavaliereNiesi}
	Maria~Pia Cavaliere and Gianfranco Niesi.
	\newblock On monomial curves and {C}ohen-{M}acaulay type.
	\newblock {\em Manuscripta Math.}, 42(2-3):147--159, 1983.
	
	\bibitem{thomaindispensableArxiv}
	Hara Charalambous, Anargyros Katsabekis, and Apostolos Thoma.
	\newblock Minimal systems of binomial generators and the indispensable complex
	of a toric ideal.
	\newblock {\em arXiv perprint arXiv:math/0607249}, pages 1--10, 2006.
	
	\bibitem{thomaindispensable}
	Hara Charalambous, Anargyros Katsabekis, and Apostolos Thoma.
	\newblock Minimal systems of binomial generators and the indispensable complex
	of a toric ideal.
	\newblock {\em Proc. Amer. Math. Soc.}, 135(11):3443--3451, 2007.
	
	\bibitem{cSemigroup}
	J.~D. D\'{\i}az-Ram\'{\i}rez, J.~I. Garc\'{\i}a-Garc\'{\i}a,
	D.~Mar\'{\i}n-Arag\'{o}n, and A.~Vigneron-Tenorio.
	\newblock Characterizing affine $\mathcal{C}$-semigroups.
	\newblock {\em arXiv preprint arXiv:1907.03276v2}, pages 1--15, 2021.
	
	\bibitem{eto2017}
	Kazufumi Eto.
	\newblock Almost {G}orenstein monomial curves in affine four space.
	\newblock {\em J. Algebra}, 488:362--387, 2017.
	
	\bibitem{etoRowFactor}
	Kazufumi Eto.
	\newblock On row-factorization matrices.
	\newblock {\em Journal of Algebra, Number Theory: Advances and Applications},
	17(2):93--108, 2017.
	
	\bibitem{etoGeneric}
	Kazufumi Eto.
	\newblock {\em Generic Toric Ideals and Row-Factorization Matrices in Numerical
		Semigroups}, pages 83--91.
	\newblock Springer International Publishing, Cham, 2020.
	
	\bibitem{GAP4}
	The GAP~Group.
	\newblock {\em {GAP -- Groups, Algorithms, and Programming, Version 4.11.1}},
	2021.
	
	\bibitem{Wilfconjecture}
	J.~I. Garc\'{\i}a-Garc\'{\i}a, D.~Mar\'{\i}n-Arag\'{o}n, and
	A.~Vigneron-Tenorio.
	\newblock An extension of {W}ilf's conjecture to affine semigroups.
	\newblock {\em Semigroup Forum}, 96(2):396--408, 2018.
	
	\bibitem{pfelements}
	J.~I. Garc\'{\i}a-Garc\'{\i}a, I.~Ojeda, J.~C. Rosales, and
	A.~Vigneron-Tenorio.
	\newblock On pseudo-{F}robenius elements of submonoids of {$\Bbb N^d$}.
	\newblock {\em Collect. Math.}, 71(1):189--204, 2020.
	
	\bibitem{peeva-gasharov}
	Vesselin Gasharov, Irena Peeva, and Volkmar Welker.
	\newblock Rationality for generic toric rings.
	\newblock {\em Math. Z.}, 233(1):93--102, 2000.
	
	\bibitem{hema2019}
	Philippe Gimenez and Hema Srinivasan.
	\newblock The structure of the minimal free resolution of semigroup rings
	obtained by gluing.
	\newblock {\em J. Pure Appl. Algebra}, 223(4):1411--1426, 2019.
	
	\bibitem{M2}
	Daniel~R. Grayson and Michael~E. Stillman.
	\newblock Macaulay2, a software system for research in algebraic geometry.
	\newblock Available at \url{http://www.math.uiuc.edu/Macaulay2/}.
	
	\bibitem{herzogWatanabe}
	J\"{u}rgen Herzog and Kei-ichi Watanabe.
	\newblock Almost symmetric numerical semigroups.
	\newblock {\em Semigroup Forum}, 98(3):589--630, 2019.
	
	\bibitem{jafari}
	Raheleh Jafari and Marjan Yaghmaei.
	\newblock Type and conductor of simplicial affine semigroups.
	\newblock {\em J. Pure Appl. Algebra}, 226(3):Paper No. 106844, 19, 2022.
	
	\bibitem{millersturmfels}
	Ezra Miller and Bernd Sturmfels.
	\newblock {\em Combinatorial commutative algebra}, volume 227 of {\em Graduate
		Texts in Mathematics}.
	\newblock Springer-Verlag, New York, 2005.
	
	\bibitem{moscariello}
	Alessio Moscariello.
	\newblock On the type of an almost {G}orenstein monomial curve.
	\newblock {\em J. Algebra}, 456:266--277, 2016.
	
	\bibitem{nari}
	Hirokatsu Nari.
	\newblock Symmetries on almost symmetric numerical semigroups.
	\newblock {\em Semigroup Forum}, 86(1):140--154, 2013.
	
	\bibitem{numata}
	T.~Numata.
	\newblock Almost symmetric numerical semigroups generated by four elements.
	\newblock {\em Proc. Inst. Nat. Sci., Nihon Univ.}, 48:197--207, 2013.
	
	\bibitem{peeva-sturmfels}
	Irena Peeva and Bernd Sturmfels.
	\newblock Generic lattice ideals.
	\newblock {\em J. Amer. Math. Soc.}, 11(2):363--373, 1998.
	
	\bibitem{rosales97}
	J.~C. Rosales.
	\newblock On presentations of subsemigroups of {${\bf N}^n$}.
	\newblock {\em Semigroup Forum}, 55(2):152--159, 1997.
	
	\bibitem{numerical}
	J.~C. Rosales and P.~A. Garc\'{\i}a-S\'{a}nchez.
	\newblock {\em Numerical semigroups}, volume~20 of {\em Developments in
		Mathematics}.
	\newblock Springer, New York, 2009.
	
	\bibitem{frobenius}
	Hossein Sabzrou and Farhad Rahmati.
	\newblock The {F}robenius number and {$a$}-invariant.
	\newblock {\em Rocky Mountain J. Math.}, 36(6):2021--2026, 2006.
	
	\bibitem{pranjal-sengupta}
	Joydip Saha, Indranath Sengupta, and Pranjal Srivastava.
	\newblock Projective closures of affine monomial curves.
	\newblock {\em arXiv preprint arXiv:2101.12440}, pages 1--23, 2021.
	
\end{thebibliography}

\end{document}